\documentclass[11pt,oneside]{amsart}

\title{Moduli spaces of twisted K3 surfaces and cubic fourfolds}
\author{Emma Brakkee}
\address{KdVI, University of Amsterdam, P.O.~box 94248,
1090 GE Amsterdam, Netherlands}
\email{e.l.brakkee@uva.nl}

\usepackage[marginratio={1:1, 1:1}]{geometry}
\usepackage{etex,microtype}

\usepackage{amsmath,amssymb}
\usepackage{amsthm,mathrsfs}
\usepackage{geometry}
\usepackage[all, cmtip]{xy}
\usepackage{mathtools}
\usepackage[utf8]{inputenc}
\usepackage{enumitem}
\usepackage{centernot}
\usepackage{url}

\usepackage[hidelinks]{hyperref}

\usepackage[T1]{fontenc}
\usepackage{graphicx}

\newcommand\blfootnote[1]{
  \begingroup
  \renewcommand\thefootnote{}\footnote{#1}
  \addtocounter{footnote}{-1}
  \endgroup
}

\hyphenation{cor-res-pon-ding}
\hyphenation{in-ver-ti-ble}

\def\dual{\smash{\raisebox{-0.1em}{\scalebox{.7}[1.4]{\rotatebox{90}{\textnormal{\guilsinglleft}}}}}}

\newcommand{\htab}{\hspace*{0.85em}}

\newcommand{\twostar}{$(\ast$$\ast)$} 
\newcommand{\twostarsp}{$(\ast$$\ast)$ } 
\newcommand{\twostarprimesp}{$(\ast$$\ast')$ } 

\newcommand{\labd}{\Lambda_d}
\newcommand{\labddual}{\Lambda^{\dual}_d}
\newcommand{\labdrdual}{\Lambda^{\dual}_{d,r}}
\newcommand{\kdperp}{K_d^{\perp}}
\newcommand{\kdprimeperp}{K_{d'}^{\perp}}
\newcommand{\epart}{E_8(-1)^{\oplus 2}}

\DeclareMathOperator{\tO}{O}
\newcommand{\stO}{\widetilde{\tO}}
\DeclareMathOperator{\Disc}{Disc}

\DeclareMathOperator{\lcm}{lcm}
\DeclareMathOperator{\modulo}{mod}
\renewcommand{\mod}{\:\modulo \:}
\DeclareMathOperator{\ord}{ord}
\DeclareMathOperator{\id}{id}
\DeclareMathOperator{\Pic}{Pic}
\DeclareMathOperator{\ns}{NS}
\DeclareMathOperator{\MathOpHom}{Hom}
\renewcommand{\hom}{\MathOpHom}
\DeclareMathOperator{\MathOpKer}{Ker}
\renewcommand{\ker}{\MathOpKer}

\DeclareMathOperator{\Db}{D^b}

\newcommand{\Z}{\mathbb{Z}}
\newcommand{\Q}{\mathbb{Q}}
\newcommand{\G}{\mathbb{G}}
\newcommand{\C}{\mathbb{C}}

\newcommand{\mM}{\mathcal{M}}
\newcommand{\mN}{\mathcal{N}}
\newcommand{\mO}{\mathcal{O}}
\newcommand{\mF}{\mathcal{F}}
\newcommand{\mD}{\mathcal{D}}
\newcommand{\mC}{\mathcal{C}}
\newcommand{\mP}{\mathcal{P}}
\newcommand{\mQ}{\mathcal{Q}}
\newcommand{\mA}{\mathcal{A}}
\newcommand{\mE}{\mathcal{E}}

\DeclareMathOperator{\tM}{M}
\DeclareMathOperator{\tN}{N}

\DeclareMathOperator{\HH}{H}
\DeclareMathOperator{\spec}{Spec}
\DeclareMathOperator{\br}{Br}
\DeclareMathOperator{\prim}{pr}
\DeclareMathOperator{\lev}{lev}
\DeclareMathOperator{\mar}{mar}
\DeclareMathOperator{\stabmin}{Stab}
\newcommand{\stab}{\stabmin\,}
\DeclareMathOperator{\aut}{Aut}
\DeclareMathOperator{\tors}{tors}
\DeclareMathOperator{\pic}{Pic}
\DeclareMathOperator{\im}{im}

\DeclareMathOperator{\isom}{Isom}

\newcommand{\md}{\tM_d}
\newcommand{\mdfun}{\mM_d}
\newcommand{\mdmar}{\tM_d^{\mar}}
\newcommand{\mdmarfun}{\mM_d^{\mar}}
\newcommand{\mdr}{\tM_d^r}
\newcommand{\mdrfun}{\mM_d^r}
\newcommand{\mdrmar}{\tM_d^{\mar,r}}

\newcommand{\mdlev}{\tM_d^{\lev}}
\newcommand{\mw}{\tM_w}
\newcommand{\tildmw}{\widetilde{\tM}_w}

\newcommand{\relspec}{\underline{\spec}\;}
\newcommand{\sheafhom}{\mathscr{H}\kern -.5pt om}

\theoremstyle{plain}
\newtheorem{theorem}{Theorem}[section]
\newtheorem{proposition}[theorem]{Proposition}
\newtheorem{lemma}[theorem]{Lemma}
\newtheorem{corollary}[theorem]{Corollary}
\newtheorem{maintheorem}{Theorem}

\theoremstyle{definition}
\newtheorem{definition}[theorem]{Definition}
\newtheorem{remark}[theorem]{Remark}

\begin{document}

\begin{abstract}
Motivated by the relation between (twisted) K3 surfaces and special
cubic fourfolds, we construct moduli spaces of polarized twisted K3
surfaces of any fixed degree and order. We do this by mimicking the
construction of the moduli space of untwisted polarized K3 surfaces as a
quotient of a bounded symmetric domain.
\end{abstract}

\maketitle

\blfootnote{The author is supported by the Bonn International Graduate School of Mathematics
and the SFB/TR 45 `Periods, Moduli Spaces and Arithmetic of Algebraic Varieties' of the DFG
(German Research Foundation).}

A twisted K3 surface is a pair $(S,\alpha)$ consisting
of a K3 surface $S$ and a Brauer class $\alpha$ on $S$.
Using the isomorphism $\br(S)\cong\HH^2(S,\mO_X^*)_{\tors}$,
twisted K3 surfaces can be seen as a degree two version of polarized K3 surfaces.
We may also view them from the perspective of Hitchin's generalized K3 surfaces \cite{GenCYstr},
using $\alpha$ to change the volume form on $S$.
This gives us a generalized Calabi--Yau structure,
to which we associate a Hodge structure
$\widetilde{\HH}(S,\alpha,\Z)$ of K3 type on
the full cohomology of $S$ \cite{HuyStelEquivTwisted}.
In this way, we can view $(S,\alpha)$ as a geometric realization of a point in the extended
period domain for K3 surfaces.

\medskip
This paper is concerned with polarized twisted K3 surfaces,
that is, K3 surfaces together with a Brauer class and a primitive ample class in $\HH^2(X,\Z)$.
Our first goal is to construct a moduli space
of these objects, fixing the degree of the polarization and the order
of the Brauer class.
This can be done up to the following concession: when $\rho(S)>1$,
one parametrizes lifts of Brauer classes to $\HH^2(S,\Q)$,
which gives a strictly bigger group than $\br(S)$.
\begin{maintheorem}[see Def.~\ref{DefModFun}, Prop.~\ref{modspace}]
There exists a scheme $\tM_d[r]$ which is a coarse moduli space
for triples $(S,L,\alpha)$ where $S$ is a K3 surface, $L\in\HH^2(S,\Z)$ is a polarization of degree $(L)^2=d$
and $\alpha$ is an element of $\hom(\HH^2(S,\Z)_{\prim},\Z/r\Z)$.
This group has a surjection to $\br(S)[r]$, which is an isomorphism if and only if $\rho(S)=1$.
\end{maintheorem}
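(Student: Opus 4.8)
\emph{Plan of proof.} The statement falls into two essentially independent parts: the existence of $\tM_d[r]$ and its coarse-moduli property, which I would establish by imitating the construction of $\md$, and the comparison of the group $\hom(\HH^2(S,\Z)_{\prim},\Z/r\Z)$ with $\br(S)[r]$, which is pure lattice theory. Fix the K3 lattice $\Lambda=U^{\oplus 3}\oplus\epart$ and a primitive $\ell\in\Lambda$ with $(\ell)^{2}=d$, and set $\labd=\ell^{\perp}$, of signature $(2,19)$. A marking $\varphi\colon\HH^2(S,\Z)\xrightarrow{\sim}\Lambda$ with $\varphi(L)=\ell$ restricts to an isometry $\HH^2(S,\Z)_{\prim}\xrightarrow{\sim}\labd$, hence carries $\alpha$ to an element $\varphi_{*}\alpha$ of the \emph{fixed} finite group $\hom(\labd,\Z/r\Z)\cong\labddual/r\labddual$; here and below I assume $r\ge 2$.

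\emph{Construction.} Let $D_d^{+}$ be the connected period domain of $\labd$ and $\Gamma_d=\stO^{+}(\labd)$ the arithmetic group with $\md=\Gamma_d\backslash D_d^{+}$, as in the untwisted construction. Since $\Gamma_d$ acts on the finite set $\hom(\labd,\Z/r\Z)$ via $g\cdot\psi=\psi\circ(g|_{\labd})^{-1}$, I would define
\[
\tM_d[r]\ :=\ \Gamma_d\backslash\bigl(D_d^{+}\times\hom(\labd,\Z/r\Z)\bigr),
\]
$\Gamma_d$ acting diagonally. As the second factor is finite, this is a finite disjoint union, over representatives $\psi$ of the $\Gamma_d$-orbits, of the quotients $\Gamma_{d,\psi}\backslash D_d^{+}$ by the finite-index stabilisers $\Gamma_{d,\psi}$; each of these is quasi-projective by Baily--Borel, so $\tM_d[r]$ is a quasi-projective scheme, finite over $\md$ via the forgetful map. (The point $(\tau,\psi)$ is the Hodge-theoretic period of a twisted K3 surface with $B$-field $\tfrac{1}{r}v$, $v\in\labddual$ a representative of $\psi$; this motivates the definition but is not used.)

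\emph{Coarse-moduli property.} To $(f\colon X\to T,L,\alpha)\in\mM_d[r](T)$ over connected $T$ I attach a morphism $T\to\tM_d[r]$: over a simply connected analytic open $V\subseteq T$ a marking $\varphi$ of $(X_V,L|_V)$ produces a holomorphic period $V\to D_d^{+}$ together with the locally constant $\varphi_{*}\alpha\in\hom(\labd,\Z/r\Z)$, and over overlaps these differ by elements of $\Gamma_d$ (the transition functions of the period map) acting on both factors, so they glue to a holomorphic $T^{\mathrm{an}}\to\tM_d[r]^{\mathrm{an}}$, algebraic by Borel's extension theorem. On $\C$-points the resulting transformation $\mM_d[r]\to\underline{\tM_d[r]}$ is surjective (surjectivity of the untwisted period map gives $(S,L,\varphi)$ with a prescribed period $\tau$, and $\alpha:=\psi\circ(\varphi|_{\HH^2_{\prim}})$ realises $(\tau,\psi)$) and injective (two triples with the same image admit, by the \emph{strong} Torelli theorem for polarized K3 surfaces, an isomorphism $g\colon S_1\xrightarrow{\sim}S_2$ with $g^{*}L_2=L_1$ whose action on $\HH^2$ matches the markings, and equality of the second coordinates then reads $g^{*}\alpha_2=\alpha_1$); the remaining universal property is checked exactly as for $\md$, the rigid datum $\alpha$ causing no new difficulty. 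I expect the real work to lie in the gluing step --- one must identify precisely which arithmetic group governs $\md$ and how it acts on $\hom(\labd,\Z/r\Z)$ --- and in the injectivity, which uses the strong rather than the weak Torelli theorem so that the isomorphism of the underlying polarized surfaces can be arranged to carry $\alpha_1$ to $\alpha_2$.

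\emph{The Brauer comparison.} A standard computation with the exponential sequence on $S$ (using $\HH^1(\mO_S)=\HH^3(S,\Z)=0$) identifies $\br(S)=\HH^2(S,\mO_S^{*})_{\tors}$ canonically with $\hom(T(S),\Q/\Z)$, where $T(S)=\ns(S)^{\perp}\subset\HH^2(S,\Z)$ is the transcendental lattice; hence $\br(S)[r]\cong\hom(T(S),\Z/r\Z)$. As $L\in\ns(S)$, we have $T(S)\subseteq\HH^2(S,\Z)_{\prim}=L^{\perp}$, and being an orthogonal complement $T(S)$ is primitive in $\HH^2(S,\Z)$, hence in $L^{\perp}$, so $L^{\perp}/T(S)$ is free of rank $21-(22-\rho(S))=\rho(S)-1$. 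Applying $\hom(-,\Z/r\Z)$ to $0\to T(S)\to L^{\perp}\to L^{\perp}/T(S)\to 0$, and using the vanishing of $\mathrm{Ext}^1$ of the free quotient, gives the asserted surjection $\hom(\HH^2(S,\Z)_{\prim},\Z/r\Z)\twoheadrightarrow\br(S)[r]$ (restriction along $T(S)\hookrightarrow L^{\perp}$), with kernel $\hom(L^{\perp}/T(S),\Z/r\Z)\cong(\Z/r\Z)^{\rho(S)-1}$; for $r\ge 2$ this vanishes exactly when $\rho(S)=1$, in which case $\ns(S)=\Z L$ and $T(S)=L^{\perp}$, so the map is the identity.
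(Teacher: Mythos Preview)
Your approach is essentially the paper's: build $\tM_d[r]$ as the $\stO(\labd)$-quotient of (the marked moduli space) $\times\ \labdrdual$, use Baily--Borel and Borel's extension theorem for algebraicity, and verify the coarse-moduli axioms via markings and strong Torelli; your Brauer comparison via $\br(S)\cong\hom(T(S),\Q/\Z)$ and the primitivity of $T(S)\subset L^{\perp}$ is a correct alternative to the paper's Kummer-sequence argument and yields the same kernel $(\Z/r\Z)^{\rho(S)-1}$.

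One genuine slip to fix: you write $\md=\Gamma_d\backslash D_d^{+}$ and then define $\tM_d[r]=\Gamma_d\backslash(D_d^{+}\times\hom(\labd,\Z/r\Z))$, but $\md$ is only the Zariski-open subset of the arithmetic quotient lying in the image of the \emph{polarized} period map, i.e.\ the complement of the images of the $(-2)$-hyperplanes $\bigcup_{\delta\in\Delta(\labd)}\delta^{\perp}$. With your definition the space is too large and your surjectivity argument (``surjectivity of the untwisted period map gives $(S,L,\varphi)$ with prescribed period $\tau$'') fails at periods on those hyperplanes, where $L$ is only big and nef. The paper handles this by working throughout with the fine analytic moduli space $\mdmar$ of \emph{marked polarized} K3 surfaces, an open submanifold of $\mD(\labd)$, setting $\mdmar[r]=\mdmar\times\labdrdual$ and $\tM_d[r]=\mdmar[r]/\stO(\labd)$; your construction becomes correct once you replace $D_d^{+}$ by this open piece.
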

We prove this by mimicking the construction of the moduli space of (untwisted) polarized
K3 surfaces via the period domain.
In particular, $\tM_d[r]$ is a quasi-projective variety with at most
finite quotient singularities, whose number of connected components
is at most $r\cdot\gcd(r,d)$ (Proposition \ref{numberofcomps}).

\medskip
In the second part of the paper,
we will concentrate on a Hodge-theoretic relation
between twisted K3 surfaces and special cubic fourfolds.
For untwisted K3 surfaces, this relation was first studied by Hassett \cite{HassettPaper}.
He also constructed,
for $d$ satisfying a numerical condition \twostar, rational maps
\[\md\dashrightarrow \mC_d\]
from the moduli space of polarized K3 surfaces of degree $d$
to the moduli space of special cubic fourfolds of discriminant $d$,
sending a K3 surface to the cubic it is associated to.

\smallskip
Associated twisted K3 surfaces were studied in by Huybrechts in \cite{K3category},
extending results of \cite{AddingtonThomas}.
The numerical condition on the discriminant given by Huybrechts can be formulated as follows:
\[\text{\twostarprimesp}\htab \text{$d'=dr^2$ for some integers $d$ and $r$,
where $d$ satisfies \twostar.}\]
We give a full generalization of Hassett's results
to the setting of twisted K3 surfaces.

\begin{maintheorem}[see Cor.~\ref{geomresult}]
A cubic fourfold $X$ is in $\mC_{d'}$ for some $d'$ satisfying \twostarprimesp if and only if
for every decomposition $d'=dr^2$ with $d$ satisfying \twostar,
$X$ has an associated polarized twisted K3 surface of degree $d$ and order $r$.
\end{maintheorem}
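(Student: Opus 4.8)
\medskip
The plan is to obtain the corollary from the period--theoretic comparison between the moduli space $\mdr$ of polarized twisted K3 surfaces of degree $d$ and order $r$ and the Heegner divisor $\mC_{dr^2}$ of cubic fourfolds; once that is in place, only a little combinatorics --- untangling how \twostarsp sits inside \twostarprimesp --- remains. The key intermediate statement is the equivalence, valid for every $d$ satisfying \twostarsp and every $r\ge 1$:
\[
X \text{ has an associated polarized twisted K3 surface of degree } d \text{ and order } r \ \Longleftrightarrow\ X\in\mC_{dr^2}.
\]
One notes first that $dr^2\equiv 0,2\bmod 6$ and $dr^2>6$ whenever $d$ does, so $\mC_{dr^2}$ is a genuine irreducible Heegner divisor, and that it depends only on the integer $dr^2$, not on the chosen factorization. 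Granting the equivalence, both implications of the corollary follow. If $X\in\mC_{d'}$ with $d'$ satisfying \twostarprimesp, then for \emph{each} factorization $d'=dr^2$ with $d$ satisfying \twostarsp we have $X\in\mC_{dr^2}$, so the equivalence produces an associated polarized twisted K3 surface of degree $d$ and order $r$. Conversely, $d'$ satisfying \twostarprimesp means precisely that such a factorization exists, and applying the equivalence backwards to any one of them gives $X\in\mC_{dr^2}=\mC_{d'}$. The quantifier on $d'$ should be read with a little care, since a very special cubic fourfold can lie in several $\mC_{d'}$ at once; for each such $d'$ satisfying \twostarprimesp the claim is run over the finitely many admissible factorizations of that particular $d'$.

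\medskip
So the substance is the equivalence, which I would prove in two steps. \emph{Step 1: abstract lattices.} By definition, an associated polarized twisted K3 surface of degree $d$ and order $r$ for $X$ consists of a triple $(S,L,\alpha)$ of degree $d$ and order $r$ together with a marking and a Hodge isometry between $\kdprimeperp(-1)$ --- the orthogonal complement in $\HH^4(X,\Z)$ of a rank two saturated sublattice $K_{d'}\ni h^2$ of discriminant $d'$ --- and the weight-two Hodge structure that $(S,L,\alpha)$ defines on the lattice $\labdrdual$ governing $\mdr$ (possibly after stabilizing by hyperbolic planes so that ranks and signatures agree). One therefore has to pin down the isometry class of $\labdrdual$, and then check, via Nikulin's theory of discriminant forms, that $\kdprimeperp(-1)\cong\labdrdual$ holds if and only if $d'=dr^2$ and $d$ satisfies \twostarsp. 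This is where the arithmetic is forced: the discriminant group of $\HH^4(X,\Z)_{\prim}$ is $\Z/3\Z$ with the discriminant form of $A_2$, and it is the analysis of when this $\Z/3\Z$-part, together with the order-$r$ data on the K3 side, can be matched against $\labd$ that rules out $4$, $9$ and odd primes $\equiv 2\bmod 3$ as divisors of $d$, i.e.\ condition \twostar; the square $r^2$ enters only through the discriminant $dr^2$ and imposes no further restriction, which is what lets one move freely between $d$ and $dr^2$. I expect this discriminant-form analysis, together with the bookkeeping needed to keep the polarization $L$ and the $B$-field/order data on the K3 side compatible with the marking on the cubic side, to be the technical heart and the main obstacle; everything around it is formal.

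\medskip
\emph{Step 2: from lattices to varieties.} Given Step 1, surjectivity of the period map for polarized twisted K3 surfaces of degree $d$ and order $r$ --- a consequence of the construction of $\mdr$ together with surjectivity of the classical K3 period map --- upgrades any abstract Hodge isometry $\kdprimeperp(-1)\cong\labdrdual$ arising from an $X\in\mC_{dr^2}$ to an actual triple $(S,L,\alpha)$ realizing the twisted period of $X$; this gives the implication $X\in\mC_{dr^2}\Rightarrow X$ has an associated polarized twisted K3 surface of degree $d$ and order $r$. The reverse implication is essentially definitional: from such an $(S,L,\alpha)$ and its accompanying Hodge isometry one reads off a rank two saturated sublattice of $\HH^4(X,\Z)$ through $h^2$ of discriminant $dr^2$, that is, $X\in\mC_{dr^2}$. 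Finally, the bound on the number of connected components of $\mdr$ plays no role here, since only the image of the period map --- as a subset of the moduli space of cubic fourfolds --- enters the statement; and specializing to $r=1$ returns Hassett's equivalence $X\in\mC_d\iff X$ has an associated polarized K3 surface of degree $d$, so the whole argument is a clean refinement of the untwisted picture, with the order of the Brauer class recorded in the passage from $\labd$ to $\labdrdual$.
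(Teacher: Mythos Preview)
Your overall architecture --- reduce to a lattice isomorphism, then invoke the period map --- is exactly the paper's, but the execution of both steps has real gaps.

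\medskip
\textbf{Step 1 (the lattice).} The object $\labdrdual=\hom(\labd,\Z/r\Z)$ is a finite abelian group, not a lattice; it cannot carry a Hodge structure and cannot be compared to $\kdprimeperp$. What actually sits on the K3 side is the sublattice
\[
T_w \;=\; \ker\bigl((w,-)\colon \labd\to\Q/\Z\bigr)\ \subset\ \labd
\]
attached to a specific class $[w]\in\labdrdual$ of order $r$; this is an index-$r$ sublattice of $\labd$ with $\Disc T_w$ of order $dr^2$. Crucially, the isometry type of $T_w$ \emph{depends on} $[w]$: different order-$r$ classes give genuinely non-isomorphic lattices (see the paper's Example, where among the three components of $\tM_2^2$ only one has $T_w\cong K_8^\perp$). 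So the statement you need is not ``$\kdprimeperp\cong\labdrdual$'', which is meaningless, but rather: \emph{there exists} $[w]$ of order $r$ with $T_w\cong\kdprimeperp$. Proving this existence is the content of Theorem~\ref{LatPrecise}, and it is not formal: one writes $w=w_{n,k}$ in standard coordinates, computes $\Disc T_w$ and its quadratic form explicitly (Proposition~\ref{cyclic}, Corollary~\ref{generatorsDisc}), and then solves congruences in $n,k$ case by case (Propositions~\ref{Cycexists}, \ref{NotCycexists1}, \ref{NotCycexists2}) to match Hassett's description of $\Disc\kdprimeperp$. Your sketch treats the K3-side lattice as canonical and skips this entirely.

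\medskip
\textbf{Step 2 (the period map).} The period map $\mP_w\colon\mdmar\times\{[w]\}\to\mD(T_w)$ is \emph{not} surjective: its complement is the union $\bigcup_{\delta\in\Delta(\labd)}\delta^\perp$ of hyperplanes cut out by $(-2)$-classes (these correspond to quasi-polarized, not polarized, K3 surfaces). To conclude that the period of a cubic $X\in\mC_{d'}$ lands in $\im\mP_w$, you need an extra input: Voisin's theorem that a smooth cubic fourfold has no classes of square $2$ in $\HH^4(X,\Z)_{\prim}\cap\HH^{2,2}(X)$. Under the isomorphism $\kdprimeperp\cong T_w$ this rules out exactly the bad hyperplanes. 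Without this, your ``surjectivity'' step does not go through.

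\medskip
The combinatorial wrapper you give (deducing the corollary from the pointwise equivalence, handling the quantifier over factorizations) is fine, as is the observation that $r=1$ recovers Hassett. But the two points above are where the actual work lies.
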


We also give the analogous construction of Hassett's rational maps to $\mC_d$.
Just like for untwisted K3 surfaces, these maps are either birational
or of degree two.
We end with a discussion of the covering involution in the degree two case,
relating this paper to \cite{Involution}.

\subsection*{Acknowledgements}
This work is part of my research as a PhD candidate.
I want to thank my advisor Daniel Huybrechts for suggesting the topic
and for many helpful discussions.
I am also grateful to Thorsten Beckmann, Matthew Dawes and Emmanuel Reinecke
for their help, and to Georg Oberdieck for comments on an earlier version.
Finally, I would like to thank Tony Várilly-Alvarado for his enthusiasm 
and insights.

\subsection{Notation}
\begin{itemize}[leftmargin=12pt]
  \item $\Lambda$ is the lattice isomorphic to the second cohomology
$\HH^2(S,\Z)$ of a K3 surface $S$.
\item $\widetilde{\HH}(S,\Z)$ is the full cohomology of $S$ with the Mukai pairing, viewed as an ungraded ring.
\item $\widetilde{\Lambda}$ is the lattice isomorphic to $\widetilde{\HH}(S,\Z)$. There is an isomorphism $\widetilde{\Lambda}\cong\Lambda\oplus U$.
  \item $\labd\subset\Lambda$ is the orthogonal complement
  of a primitive element $\ell_d\in\Lambda$ of square $d$,
which is unique up to $\tO(\Lambda)$.
\item $\labdrdual:= (\tfrac1r\labddual)/\labddual\cong
\labddual\otimes\Z/r\Z\cong\hom(\labd,\Z/r\Z)$.
\item $\stO(\labd):=\ker(\tO(\labd)\to\tO(\Disc\labd))$.
This group acts naturally on $\labdrdual$.
\item For a lattice isomorphism $\varphi\colon L\to L'$, $\varphi_r$ is the
induced map 
$L^{\dual}\otimes \Z/r\Z\to(L')^{\dual}\otimes\Z/r\Z$.
\item $G[r]$ is the $r$-torsion subgroup of an abelian group $G$. 
\item Cohomology with coefficients in $\G_m$ means étale cohomology.
Otherwise we always use the analytic topology.
\item $\mM_d$ is the moduli functor for polarized K3 surfaces of degree $d$,
and $\Phi\colon\mM_d\to\tM_d$ is the associated coarse moduli space.
\end{itemize}

\begin{remark}
By a moduli functor $\mM$, we will mean a functor on the category of schemes of finite type over $\spec\C$.
A coarse moduli space for $\mM$ is a scheme $\tM$ with a morphism $\xi\colon \mM\to\tM$
such that $\xi(\C)$ is a bijection, and we have factorization over $\tM$ of morphisms $\mM\to T$
for $T$ any $\C$-scheme of finite type.
\end{remark}

\section{Twisted K3 surfaces}

\subsection{Definitions}
For references, see \cite{GlobalTorelliSurvey}, \cite{GeneralizedCY}.
Recall that the \emph{Brauer group} $\br(X)$ of a scheme $X$
is the group of sheaves of Azumaya algebras modulo Morita equivalence,
with multiplication given by the tensor product.
If $X$ is quasi-compact and separated and has an ample line bundle,
then $\br(X)$ is isomorphic to the \emph{cohomological Brauer group}
\[\br(X)':=\HH^2(X,\G_m)_{\tors},\]
which equals $\HH^2(X,\G_m)$ when $X$ is regular and integral.
If $X$ is a complex K3 surface, one can moreover show that
\[\br(X)\cong\HH^2(X,\mO_X^*)_{\tors}\cong(\Q/\Z)^{22-\rho(X)}.\]
A \emph{twisted K3 surface} is a pair $(S,\alpha)$ where $S$ is a K3 surface and
$\alpha\in\br(S)$.
Two twisted K3 surfaces $(S,\alpha)$ and $(S',\alpha')$ are \emph{isomorphic}
if there exists an isomorphism $f\colon S\to S'$ such that $f^*\alpha'=\alpha$.

\medskip
One sees from the exponential sequence on $S$ that any Brauer class $\alpha\in \HH^2(S,\mO_X^*)_{\tors}$
can be written as $\exp(B^{0,2})$ for some $B\in \HH^2(S,\Q)$,
which is unique up to $\HH^2(S,\Z)$ and $\ns(S)\otimes\Q$.
Thus, denoting by $T(S)$ the transcendental lattice of $S$, intersecting with $B$ gives a linear map
${f_{\alpha}=(B,-)\colon T(S)\to\Q/\Z}$ which only depends on $\alpha$.
One can show that $\alpha\mapsto f_{\alpha}$ yields an isomorphism ${\br(S)\cong\hom(T(S),\Q/\Z)}$.

\medskip
Given a lift $B\in \HH^2(S,\Q)$ of $\alpha$,
we define a weight two Hodge structure of K3 type $\widetilde{\HH}(S,B,\Z)$ on the full cohomology of $S$ by 
\[\widetilde{\HH}{}^{2,0}(S,B):=\C[\exp(B)\sigma]\subset \widetilde{\HH}(S,\C),\]
where $\sigma$ is a nowhere degenerate holomorphic 2-form on $S$
and $\exp(B)\sigma := \sigma+B\wedge\sigma$.
This Hodge structure does not depend on our choice of $B$ (up to non-canonical isomorphism \cite[\S2]{HuyStelEquivTwisted}),
so we can define
\[\widetilde{\HH}(S,\alpha,\Z):=\widetilde{\HH}(S,B,\Z)\]
for any $B\in\HH^2(X,\Q)$ with $\exp(B^{0,2})=\alpha$.

\medskip
The \emph{Picard group} of $(S,\alpha)$ is defined as
$\widetilde{\HH}{}^{1,1}(S,\alpha)\cap \widetilde{\HH}(S,\Z)$, so
\[\pic(S,\alpha)=\{\delta\mid (\delta,\exp(B)\sigma)=0\}\subset \widetilde{\HH}(S,\alpha,\Z)\]
for $B\in\HH^2(S,\Q)$ lifting $\alpha$.
If $\alpha$ is trivial, then ${\Pic(S,\alpha)=\HH^0(S,\Z)\oplus\Pic(S)\oplus\HH^4(S,\Z)}$.
The \emph{transcendental lattice} $T(S,\alpha)$ is defined as the orthogonal complement of $\pic(S,\alpha)$
in $\HH^*(S,\alpha,\Z)$. If $\alpha$ is trivial, then $T(S,\alpha)$ is the transcendental lattice $T(S)$ of $S$.
One can show that
\[T(S,\alpha)\cong \ker(f_{\alpha}\colon T(S)\to\Q/\Z)=\{x\in T(S)\mid (B,x)\in\Z\}.\]

\begin{definition}
A \emph{polarized twisted K3 surface} is a triple $(S,L,\alpha)$, where $S$ is a K3 surface,
$L\in\HH^2(S,\Z)$ is a primitive ample class and $\alpha\in\br(S)$.
Two twisted polarized K3 surfaces $(S,L,\alpha)$ and $(S',L',\alpha')$ are \emph{isomorphic}
if there exists an isomorphism $f\colon S\to S'$ such that $f^*L'=L$ and $f^*\alpha'=\alpha$.

We define two invariants of $(S,L,\alpha)$:
its \emph{degree} $d=(L)^2$ and its \emph{order} $r=\ord(\alpha)$
(also known as its \emph{period}).
\end{definition}

\subsection{A non-existence result for moduli spaces}

Ideally, one would like to find a (coarse) moduli space $\tN_d[r]$ for the following functor:
\[\mN_d[r]\colon (Sch/\C)^{o}\to(Sets), T\mapsto \{(f\colon S\to T,L,\alpha)\}.\]
Here, $(f:S\to T,L\in\HH^0(T,R^1f_*\G_m))$ is a smooth proper family
of polarized K3 surfaces of degree $d$ and $\alpha\in\HH^0(T,R^2f_*\G_m)$ 
such that for any closed point $x\in T$, 
base change gives a Brauer class $\alpha_x\in\HH^2(S_x,\G_m)[r]$.

\medskip
It is, however, not difficult to show that $\tN_d[r]$ does not exist
as a locally Noetherian scheme.
Namely, suppose $\mN_d[r]\to \tN_d[r]$ exists.
Consider the natural transformation $\xi\colon \mN_d[r]\to\mM_d$ which at
a scheme $T$ is defined by ${(S\to T,L,\alpha)\mapsto (S\to T,L)}$.
By the properties of a coarse moduli space, there exists a unique morphism $\pi\colon \tN_d[r]\to \tM_d$
which makes the following diagram commute:
\[\xymatrix{
\mN_d[r] \ar_{\xi}[d] \ar[r] & \tN_d[r] \ar@{-->}^{\exists\pi}[d]\\
\mM_d \ar_{\Phi}[r] & \tM_d.
} \]
For a closed point $y\in \tN_d[r]$ corresponding to a tuple $(S,L,\alpha)$,
the image $\pi(y)$ should be the point $x$ of $\tM_d$ corresponding to $(S,L)$.
So the fibre of $\pi$ over $x$ is
\[(\tN_d[r])_x=\{(S,L,\alpha)\mid \alpha\in\br(S),\;\ord\alpha =r\}/\aut(S,L).\]
For $d>2$, let $U\subset \tM_d$ be the open subset where $\aut(S,L)$ is trivial.
Over $U$, we have $(\tN_d[r])_x\cong\br(S)[r]\cong(\Z/r\Z)^{22-\rho(S)}$.
In particular, $\pi|_{\tN_d[r]\times_{\tM_d} U}$ is ramified exactly
over the locus where $\rho(S)>1$.
Now this set is dense in $U$, thus not Zariski closed, giving a contradiction.

For $d=2$, let $U\subset \tM_2$ be the open subset where $\aut(S,L)\cong \Z/2\Z$.
Then over $U$, we have $2^{21-\rho(S)}\leq |(\tN_2[r])_x|\leq 2^{22-\rho(S)}$.
So $\pi|_{\tN_2^r\times_{\tM_2} U}$ is ramified (at least) over the locus where $\rho(S)>2$,
again a dense set in $U$, which leads to a contradiction.

\medskip
When requiring that $\alpha$ has order $r$ (on each connected component of $T$),
non-existence is proven similarly.
One obtains a morphism $\pi$ to $\tM_d$ such that over an open subset $U\subset\tM_d$,
the cardinality of the fibre of $\pi$ over $(S,L)\in U$ is
the number of elements of order $r$ in $(\Z/r\Z)^{22-\rho(S)}$
(or half this number when $d=2$).
Again, $\pi|_{\pi^{-1}(U)}$ is ramified exactly over the locus where $\rho(S)>1$
(at least over the locus where $\rho(S)>2$ when $d=2$), a contradiction.

\section{Moduli spaces of polarized twisted K3 surfaces} 
We will construct a slightly different moduli space $\tM_d[r]$ mapping to $\tM_d$,
whose fibre over $(S,L)\in\tM_d$ parametrizes triples $(S,L,\alpha)$
with $\alpha\in\hom(\HH^2(S,\Z)_{\prim},\Z/r\Z)$.
There is a surjective homomorphism from this group to $\br(S)[r]$,
which is an isomorphism if and only if $\rho(S)=1$.

\subsection{Definition of the moduli functor}
Note that the Kummer sequence \[0\to\mu_r\to\G_m\stackrel{(\cdot)^r}{\to}\G_m\to 0\]
induces a short exact sequence
\[0\to\Pic(S)\otimes\Z/r\Z\to\HH^2(S,\mu_r)\to \br(S)[r]\to 0.\]
If $L\in\HH^2(S,\Z)$ is a polarization, we have injections
$\Z/r\Z\cdot L\hookrightarrow \Pic S\otimes\Z/r\Z\hookrightarrow \HH^2(S,\Z/r\Z)$.
Hence, we get a surjective map
\[\xymatrixrowsep{0.5pc}\xymatrix{
\HH^2(S,\Z/r\Z)/(\Z/r\Z\cdot L) \ar@{=}[d]\ar@{->>}[r] &
\HH^2(S,\Z/r\Z)/(\Pic S\otimes\Z/r\Z) \ar@{=}[d] \;\mathrlap{\cong\br(S)[r]}\\
\HH^2(S,\Z)^{\dual}_{\prim}\otimes\Z/r\Z & T(S)^{\dual}\otimes\Z/r\Z &
}\]
which is an isomorphism if and only if $\rho(S)=1$.

\medskip
We define a relative version of
$\HH^2(S,\Z)^{\dual}_{\prim}\otimes\Z/r\Z\cong\hom(\HH^2(S,\Z)_{\prim},\Z/r\Z)$ as follows.
For a smooth proper family $(f\colon S\to T,L)$ of polarized K3 surfaces, set
\[R^2_{\prim}f_*\Z := \ker\Big(R^2f_*\Z\xrightarrow{\cdot c_1(L)} R^4f_*\Z\Big)\]
where $c_1(L)$ is the image of $L$ under $\HH^0(T,R^1f_*\G_m)\to \HH^0(T,R^2f_*\Z)$.
Let $\mF[r]$ be the following local system:
\[\mF[r]:=\sheafhom(R^2_{\prim}f_*\Z,\underline{\Z/r\Z}).\]

\begin{definition}\label{DefModFun} The moduli functor $\mM_d[r]$ is defined as
\[\mM_d[r]\colon (Sch/\C)^{o}\to(Sets),\; T\mapsto\{(f\colon S\to T,L,\alpha)\}/_{\cong}\]
where $\big(f:S\to T,L\in\HH^0(T,R^1f_*\G_m)\big)$
is a smooth proper family of polarized K3 surfaces of degree $d$
and $\alpha\in\HH^0(T,\mF[r])$.
We define
\[\mM_d^r \colon (Sch/\C)^{o}\to(Sets)\]
to be the subfunctor
sending a scheme $T$ to the set of those tuples $(f,L,\alpha)$
for which $\alpha$ has order $r$ on each connected component of $T$.
\end{definition}
We will construct coarse moduli spaces for $\mM_d[r]$ and $\mM_d^r$.

\subsection{Construction of the moduli space}
Recall the construction of $\tM_d$ as a subvariety of a quotient of a bounded symmetric domain
(see e.g.\ \cite{LecturesOnK3}).
The moduli functor $\mM_d^{\mar}$ of marked polarized K3 surfaces of degree $d$ is given by
\[\mM_d^{\mar}(T)=\{(f\colon S\to T,L\in\HH^0(T,R^1f_*\G_m),
\varphi\colon R^2_{\prim}f_*\Z\cong\underline{\Lambda_d})\}.\] 
It has an analytic fine moduli space $\tM_d^{\mar}$, 
which can be constructed as an open submanifold of the period domain $\mD(\labd)$ of $\labd$.
In particular, there exists a universal family
\[\left(f\colon S^{\mar}\to \tM_d^{\mar},L^{\mar},\varphi^{\mar}\right).\]
We denote the morphism $\mdmarfun\to \mdmar$ by $\Phi^{\mar}$.
The moduli space $\tM_d$ is obtained from $\tM_d^{\mar}$ by taking the quotient under
the action of $\stO(\Lambda_d)$.

\medskip
Note that $\varphi^{\mar}$ induces an isomorphism
$\varphi^{\mar}_r\colon\mF[r]\cong\underline{\hom(\labddual,\Z/r\Z)}\cong\underline{\labdrdual}$,
thus, $\mF[r]$ is the sheaf of sections of the trivial finite cover
\begin{align*}
\mdmar[r]&:=
\relspec \sheafhom(\underline{\labdrdual},\mO_{\tM_d^{\mar}})\\
&\;=\tM_d^{\mar}\times \labdrdual.
\end{align*}
The space $\mdmar[r]$ is a coarse moduli space for the functor
\[\mdmarfun[r]\colon (Sch/\C)^{o}\to(Sets),\; T\mapsto\{(f\colon S\to T,L,\varphi,\alpha)\}\]
where $(f\colon S\to T,L,\varphi)\in\mdmar(T)$ and $\alpha\in\HH^0(T,\mF[r])$,
and the morphism 
\[\Phi^{\mar}[r]\colon \mdmarfun[r]\to\mdmar[r]\] 
is defined over a scheme $T$ by
\[(S\to T,L,\varphi,\alpha)\mapsto (\Phi^{\mar}(S\to T,L,\varphi),\varphi_r(\alpha)).\]
So we have a commutative diagram
\[\xymatrixcolsep{3.5pc}\xymatrix{ \mdmarfun \ar[r]^{\Phi^{\mar}} & \mdmar \\
 \mdmarfun[r] \ar[u] \ar[r]_{\Phi^{\mar}[r]} & \mdmar[r] \ar[u]
} \]

The action of $\stO(\Lambda_d)$ on $\tM_d^{\mar}$ lifts to $\mdmar[r]$ via
\[g(S,L,\varphi,\alpha) = \left(S,L,g\circ\varphi,\varphi_r^{-1}g\varphi_r(\alpha)\right).\]
Under $\mdmar[r]\hookrightarrow\mD(\labd)\times\labdrdual$,
this is the restriction of the natural action of
$\stO(\labd)$ on $\mD(\labd)\times\labdrdual$.
This action is properly discontinuous: it is on $\mD(\labd)$
(see \cite[Rem.~6.1.10]{LecturesOnK3}),
so also on the product with the finite group $\labdrdual$.
It follows that the quotient
\[\md[r]:=\mdmar[r]/\stO(\Lambda_d)\] 
exists as a complex space.
Similarly, let $\mdrmar\subset \mdmar[r]$ be the union of those components $\mdmar[r]\times\{v\}$ 
for elements $v\in\labdrdual$ of order $r$. Then the quotient
\[\mdr:=\mdrmar/\stO(\Lambda_d)\]
exists as a complex space.

\medskip
We claim that $\md[r]$ and $\mdr$ are quasi-projective varieties.
Consider the following commutative diagram:
\[\xymatrix{\mdmar[r]=\tM_d^{\mar}\times \labdrdual \ar[d] \ar^-{\pi}[r]
& \labdrdual\ar[d] \\
\tM_d[r] \ar^-{\bar{\pi}}[r] & \labdrdual/\stO(\Lambda_d).
}\]
Giving the sets on the right side the discrete topology, all these maps are continuous.
So under $\bar{\pi}$, each connected component of $\tM_d[r]$
is mapped to a point.
Vice versa, given $[w]\in\labdrdual$, the inverse image 
of $\stO(\labd)\cdot[w]\in\labdrdual/\stO(\labd)$ under $\bar{\pi}$ is
\[\mw:=\left(\mdmar\times\, \stO(\Lambda_d)\cdot [w]\right)/\stO(\Lambda_d)
\cong \big(\mdmar\times\{[w]\}\big)/\stab[w]\]
where $\stab[w]\subset \stO(\Lambda_d)$ is the stabilizer of $[w]$
under the acion of $\stO(\Lambda_d)$ on $\labdrdual$.
Now $\stab [w]$ contains the reflection $s_{\delta}$ for an element $\delta\in\labd$
of square $-2$ orthogonal to $w$ and $\ell_d'$, which interchanges
the two connected components of $\mdmar$.
Hence, $\mw$ is connected; even irreducible.
This shows that the connected components of $\tM_d[r]$ are in one-to-one correspondence with
$\labdrdual/\stO(\Lambda_d)$.
Each component $\mw$ parametrizes triples $(S,L,\alpha)$
that admit a marking $\varphi$ with $\varphi_r(\alpha)=[w]$. 
The components belonging to $\tM_d^r$ are those $\mw$ for which $[w]$ has order $r$.

\begin{corollary}\label{ConnComp}
 Every connected component of $\md[r]$ (and therefore of $\mdr$) is an irreducible,
 quasi-projective variety with at most finite quotient singularities.
\end{corollary}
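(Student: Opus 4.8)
The plan is to reduce everything to a single connected component. By the discussion preceding the statement, every connected component of $\md[r]$ is of the form $\mw\cong\tM_d^{\mar}/\stab[w]$ for some $[w]\in\labdrdual$, and its irreducibility has already been checked there, using that $\stab[w]$ contains a $(-2)$-reflection $s_{\delta}$ interchanging the two components of $\tM_d^{\mar}$. Since $\md[r]$ is the finite disjoint union of the $\mw$ indexed by $\labdrdual/\stO(\labd)$, and $\mdr$ is the union of those $\mw$ with $[w]$ of order $r$, it suffices to show that each individual $\mw$ is a quasi-projective variety with at most finite quotient singularities.

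For quasi-projectivity, the first step is the observation that $\stO(\labd)\cdot[w]$ is a finite set, so $\stab[w]$ has finite index in $\stO(\labd)$, hence is again an arithmetic subgroup of $\tO(\labd)$ acting on the type IV domain $\mD(\labd)$; moreover the element $s_{\delta}$ above already swaps the two components of $\mD(\labd)$, so $\mD(\labd)/\stab[w]$ is the quotient of a single Hermitian symmetric domain by an arithmetic group. One then simply repeats, for $\stab[w]$ in place of $\stO(\labd)$, the argument that makes $\tM_d=\tM_d^{\mar}/\stO(\labd)$ quasi-projective (see \cite{LecturesOnK3}): by Baily--Borel, $\mD(\labd)/\stab[w]$ is quasi-projective, and $\mw$ is obtained from it by deleting the image of the locally finite arrangement of $(-2)$-hyperplanes of $\mD(\labd)$, an image which is a closed algebraic subvariety since that arrangement consists of finitely many $\stab[w]$-orbits, each contributing a Baily--Borel quotient of a smaller type IV domain. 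Equivalently and more quickly: the finite index of $\stab[w]$ makes the tautological map $\mw=\tM_d^{\mar}/\stab[w]\to\tM_d^{\mar}/\stO(\labd)=\tM_d$ a finite morphism onto the quasi-projective variety $\tM_d$, and a normal complex space finite over a quasi-projective variety is quasi-projective, the finite cover being algebraizable by the Riemann existence theorem.

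For the singularities, recall that $\tM_d^{\mar}$ is a smooth complex manifold (an open submanifold of $\mD(\labd)$) on which $\stO(\labd)$, and therefore its subgroup $\stab[w]$, acts properly discontinuously; on a complex manifold a properly discontinuous action automatically has finite stabilizers. Hence $\mw=\tM_d^{\mar}/\stab[w]$ is locally the quotient of a polydisc by a finite group, i.e.\ it is normal and has at most finite quotient singularities, which also supplies the normality used in the previous paragraph.

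The main obstacle is purely the passage from the analytically defined quotient $\tM_d^{\mar}/\stab[w]$ to an honest quasi-projective variety; once one imports Baily--Borel, or equivalently the algebraicity of finite analytic covers of quasi-projective varieties, everything else — irreducibility, finiteness of the index $[\stO(\labd):\stab[w]]$, and finiteness of the stabilizers — is elementary and has essentially already been recorded.
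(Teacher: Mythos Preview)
Your argument is correct but follows a different path from the paper's. Rather than invoking Riemann existence for the finite analytic cover $\mw\to\tM_d$, or describing the complement $\mD(\labd)/\stab[w]\setminus\mw$ directly as a finite union of Heegner divisors, the paper passes through the \emph{algebraic} fine moduli space $\mdlev$ of K3 surfaces with a $\Lambda/\ell\Lambda$-level structure (for $\ell$ a large multiple of $r$): the universal family over $\mdlev$ produces a holomorphic map $\mdlev\to\mD(\labd)/\stab[w]$ with image $\mw$, and Borel's extension theorem forces this map to be algebraic, so the image is constructible and, being analytically open, Zariski open. Your route~(b) is slicker and avoids level structures at this stage; the paper's route has the side benefit that the same level-structure device is recycled in the proof that $\tM_d[r]$ is a coarse moduli space (Proposition~\ref{modspace}), where one must show that certain a~priori holomorphic maps $\tM_d[r]\to T$ are algebraic. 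One small caveat on your route~(b): the cover $\mw\to\tM_d$ can be ramified, so strictly speaking you want the Grauert--Remmert/SGA1~XII version of Riemann existence rather than the purely \'etale one.
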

\begin{proof}
The finite index subgroup $\stab[w]\subset \stO(\Lambda_d)$ being arithmetic,
the quotient space $\mD(\labd)/\stab[w]$ is a quasi-projective variety with finite
quotient singularities, by \cite{BB} and \cite[Lemma~IV.7.2]{Satake}.
We will show that $\mw=\big(\mdmar\times\{[w]\}\big)/\stab[w]$
is a Zariski open subset of it,
using the same argument as for the algebraicity of the moduli space
of untwisted polarized K3 surfaces
(see e.g.\ \cite[\S6.4.1]{LecturesOnK3}).

Let $\ell$ be a large enough multiple of $r$ such that there exists a fine moduli space
$\mdlev$ of polarized K3 surfaces with a $\Lambda/\ell\Lambda$-level structure,
see Remark \ref{levmod} below.
For the universal family $\pi\colon S^{\lev}\to\mdlev$, there exists a marking
$R^2_{\prim}\pi_*\Z\otimes\underline{\Z/\ell\Z}\cong \underline{\labd\otimes\Z/\ell\Z}$.
This induces a holomorphic map $\mdlev\to\mD(\labd)/\Gamma_{\ell}$, where
\[\Gamma_{\ell}=\{g\in\stO(\labd)\mid g\equiv \id\mod \ell\}\subset\stab[w].\]
The image of this map is $\mdmar/\Gamma_{\ell}$.
Dividing out further by $\stab[w]$ yields a holomorphic map
\[\mdlev\twoheadrightarrow \mdmar/\stab[w]\subset\mD(\labd)/\stab[w].\]
By a theorem of Borel \cite{Borel} (and also \cite[Lemma~IV.7.2]{Satake}),
this map is algebraic, and therefore the image $\mdmar/\stab[w]$ is constructible.
Since it is also analytically open in $\mD(\labd)/\stab[w]$,
it is Zariski open \cite[Cor.~XII.2.3]{SGA1}.
\end{proof}

\begin{remark}\label{levmod}
Recall (see e.g.\ \cite[\S6.4.2]{LecturesOnK3}) that for $\ell$ large enough, 
there exists a fine moduli space $\mdlev$
of polarized K3 surfaces $(S,L)$ of degree $d$ with a \emph{$\Lambda/\ell\Lambda$-level structure},
i.e.\ an isometry $\HH^2(S,\Z)_{\prim}\otimes\Z/\ell\Z\cong \labd\otimes\Z/\ell\Z$.
The space $\mdlev$ is a smooth quasi-projective variety which is a finite cover of $\md$.
We could have constructed $\tM_d[r]$ as a quotient of 
$\mdlev\times\labdrdual$ instead,
choosing $\ell$ to be a large enough multiple of $r$.
\end{remark}

One constructs a morphism $\Psi\colon\mM_d[r]\to \tM_d[r]$ in the following way.
Consider a point $(f\colon S\to T,L,\alpha)$ in $\mM_d[r](T)$.
Proceeding as for untwisted polarized K3 surfaces, we pass to the (infinite) étale covering
\[\widetilde{T}:=\isom(R_{\prim}^2f_*\Z,\underline{\labd})
\overset{\eta}{\longrightarrow}T,\]
which has a natural $\stO(\labd)$-action, satisfying
$\widetilde{T}/\stO(\labd)\cong T$.
Write $\widetilde{f}\colon \widetilde{S}\to\widetilde{T}$ for the pullback family.
The local system $R^2_{\prim}\widetilde{f}_*\Z$ is trivial: there
exists a canonical isomorphism 
${\varphi\colon R^2_{\prim}\widetilde{f}_*\Z\cong\underline{\Lambda_d}}$.
Now $\Phi^{\mar}[r]\big(\widetilde{S},\eta^*L,\varphi,\eta^*\alpha\big)$
is an element of $\mdmar[r](\widetilde{T})$,
i.e.\ a holomorphic map $\widetilde{T}\to \mdmar[r]$.
This map is $\stO(\labd)$-equivariant, hence descends to a map $T\to \md[r]$.
This map is algebraic by \cite{Borel}, thus defines a point in $\tM_d[r](T)$.
We let $\Psi(S\to T,L,\alpha)$ be this point.

\begin{proposition}\label{modspace} 
The space $\tM_d[r]$ is a coarse moduli space for the functor $\mM_d[r]$.
\end{proposition}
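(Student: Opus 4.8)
The plan is to verify the two conditions in the definition of a coarse moduli space recalled above: that $\Psi(\C)$ is a bijection, and that every natural transformation $\mM_d[r]\to\underline T$ with $T$ of finite type over $\C$ factors through $\Psi$. Throughout I would follow the blueprint of the untwisted construction recalled above (see also \cite[\S6.3--6.4]{LecturesOnK3}), treating the extra factor $\labdrdual$ as essentially along for the ride: because $\alpha$ is now just a homomorphism out of $\HH^2(S,\Z)_{\prim}$, it transports functorially along any isometry, so the real input is the untwisted statements already in hand.

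For the bijection on $\C$-points, surjectivity goes as follows: a $\C$-point of $\md[r]$ lifts under the quotient map to a point $(x,v)\in\mdmar\times\labdrdual$; the universal family on the fine moduli space $\mdmar$ attaches to $x$ a marked polarized K3 surface $(S,L,\varphi)$ of degree $d$, and putting $\alpha:=\varphi_r^{-1}(v)\in\hom(\HH^2(S,\Z)_{\prim},\Z/r\Z)$ yields a triple with $\Psi(S,L,\alpha)$ equal to the chosen point. For injectivity, if $(S,L,\alpha)$ and $(S',L',\alpha')$ have the same image then, after choosing markings $\varphi,\varphi'$, their periods are identified by some $g\in\stO(\labd)$, so $(S,L,g\circ\varphi)$ and $(S',L',\varphi')$ define the same point of $\mdmar$ and hence are isomorphic as marked polarized K3 surfaces; an isomorphism $f\colon S\to S'$ realising this satisfies $f^*L'=L$, and the identity $g_r(\varphi_r(\alpha))=\varphi'_r(\alpha')$ forced by the hypothesis translates into $f^*\alpha'=\alpha$, so the triples are isomorphic. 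The same computation shows the rule $(x,v)\mapsto(S,L,\alpha)$ is independent of the chosen lift, hence is a two-sided inverse of $\Psi(\C)$.

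For the universal property I would fix a multiple $\ell$ of $r$ large enough that the fine moduli space $\mdlev$ of Remark \ref{levmod} exists, with universal family $\pi\colon S^{\lev}\to\mdlev$; since $r\mid\ell$ the level structure also trivialises $\mF[r]=\sheafhom(R^2_{\prim}\pi_*\Z,\underline{\Z/r\Z})$, so each $v\in\labdrdual$ determines a section of $\mF[r]$ over $\mdlev$ and together these furnish a tautological object of $\mM_d[r](\mdlev\times\labdrdual)$. Evaluating a given $\zeta\colon\mM_d[r]\to\underline T$ on it produces a morphism $\mdlev\times\labdrdual\to T$. Now the period map identifies $\mdlev$ with $\mdmar/\Gamma_\ell$, hence $\mdlev\times\labdrdual$ with $\mdmar[r]/\Gamma_\ell$ (the group $\Gamma_\ell$ acting trivially on $\labdrdual$ since $g\equiv\id\bmod\ell$ forces $g_r=\id$), so $\md[r]=\mdmar[r]/\stO(\labd)$ is the quotient of $\mdlev\times\labdrdual$ by the finite group $\stO(\labd)/\Gamma_\ell$. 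The morphism $\mdlev\times\labdrdual\to T$ is constant on the orbits of this group: an element $g$ acts through the automorphism $\gamma_g$ of $\mdlev$ (which, by fineness, pulls the universal polarized family back to itself and the level structure back to its composite with $g$) and through $g_r$ on $\labdrdual$, and the pair $(\gamma_g,g_r)$ therefore carries the tautological object to an isomorphic one, which $\zeta$ sends to the same point of $T$. Hence the morphism descends, through the quotient scheme, to the desired $h\colon\md[r]\to T$. To see $h\circ\Psi=\zeta$, I would evaluate on an arbitrary $(f\colon S\to T',L,\alpha)$ and pull back $T'$ along a cover trivialising $R^2_{\prim}f_*\Z\bmod\ell$ (for instance a component of the level-$\ell$ isometry cover together with the $\labdrdual$-torsor); there $(f,L,\alpha)$ is a pullback of the tautological object, $\Psi$ sends the latter to the natural map $\mdlev\times\labdrdual\to\md[r]$ exhibiting the quotient, and the identity reduces to $h\circ\Psi(\mathrm{taut})=\zeta(\mathrm{taut})$, true by construction of $h$. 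Uniqueness of $h$ is automatic, since $\md[r]$ is a reduced scheme of finite type over $\C$ and $T$ is separated, so $h$ is pinned down by its effect on $\C$-points, which is forced by $h\circ\Psi=\zeta$ and the surjectivity of $\Psi(\C)$. The functor $\mM_d^r$ is handled by restricting all of this to the components indexed by elements $v$ of order $r$.

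I expect the genuine obstacle to be the last compatibility $h\circ\Psi=\zeta$ over an arbitrary test scheme rather than just over $\C$: $\Psi$ was built from the infinite $\isom$-cover and an $\stO(\labd)$-equivariant period map, whereas $h$ comes from the finite level-$\ell$ cover, so reconciling them requires a careful comparison of the two covers and a check that the two trivialisations of $\mF[r]$ in play agree up to the relevant group action. Everything else --- Torelli, surjectivity of the period map, properness of the quotients, finite-group descent --- is standard and already used in the untwisted construction recalled above; the twisting contributes only the bookkeeping of the $\labdrdual$-factor, which stays harmless precisely because $\alpha$ was defined as a homomorphism and not as an honest Brauer class.
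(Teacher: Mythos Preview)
Your argument is correct and close in spirit to the paper's, but the two are organized differently for the universal property. The paper first exploits that $\mdmar[r]$ is itself a coarse (in fact fine, analytically) moduli space: given $s\colon\mdfun[r]\to T$, it composes with the forgetful $F$ to get $s\circ F\colon\mdmarfun[r]\to T$, applies the coarse moduli property of $\mdmar[r]$ to obtain a \emph{holomorphic} $t\colon\mdmar[r]\to T$, observes that uniqueness forces $t$ to be $\stO(\labd)$-equivariant, and only then passes to $\mdlev\times\labdrdual$ to upgrade the resulting $\md[r]\to T$ from holomorphic to algebraic. You instead skip the analytic intermediary and build the map to $T$ directly by evaluating $\zeta$ on a tautological object over the algebraic scheme $\mdlev\times\labdrdual$, then descend along the finite quotient by $\stO(\labd)/\Gamma_\ell$.

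Both routes are valid. The paper's has the virtue that the factorization $h\circ\Psi=\zeta$ falls out of the commutative square relating $\Phi^{\mar}[r]$, $F$, $q$, and $\Psi$ together with $t\circ\Phi^{\mar}[r]=s\circ F$, so the compatibility you flag as the ``genuine obstacle'' is handled almost formally (at the cost of invoking the analytic coarse moduli property of $\mdmar[r]$, which the paper asserts but does not prove in detail). Your route has the virtue of staying algebraic throughout and making the descent explicit, but you then owe the compatibility check you describe; your sketch of it via pullback to a level-$\ell$ cover is the right idea and goes through, though it does need the observation that two morphisms from a scheme to a separated $T$ agreeing after an fpqc (here finite \'etale) cover already agree. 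Your uniqueness argument via $\C$-points is fine since everything in sight is reduced and of finite type over $\C$.
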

\begin{proof}
By definition, there is a commutative diagram
\[\xymatrixcolsep{3.5pc}\xymatrix{ \mdmarfun[r] \ar[r]^{\Phi^{\mar}[r]} \ar[d]_{F} & \mdmar[r] \ar[d]^{q}\\
\mdfun[r]\ar[r]_{\Psi} & \md[r]
}\]
where the map $F$ forgets the marking and $q$ is the quotient map.
We need to show that $\Psi(\C)\colon \mdfun[r](\C)\to\md[r](\C)$ is a bijection.
For $x\in\md[r](\C)$, let $y\in\mdmar[r](\C)$ such that $q(y)=x$.
Set $\Psi(\C)^{-1}(x):= F(\Phi^{\mar}[r](\C)^{-1}(y))$;
note that this does not depend on the choice of $y$.
One checks that $\Psi(\C)^{-1}$ defines a set-theoretic inverse to $\Psi(\C)$.

\smallskip
For the universal property of $\Psi$, let $s\colon \mdfun[r]\to T$ 
be a morphism to a finite type $\C$-scheme $T$.
Then $s\circ F$ is a map from $\mdmarfun[r]$ to $T$;
since $\mdmarfun[r]\to\mdmar[r]$ is a coarse moduli space,
this induces a unique holomorphic map $t\colon \mdmar[r]\to T$ such that 
$t\circ \Phi^{\mar}[r]=s\circ F$.
It follows from the uniqueness that $t$ is equivariant, thus factors over a holomorphic map
$\md[r]\to T$. We will show that this map is algebraic.

Like before, let $\ell$ be a large enough multiple of $r$
such that there exists a fine moduli space $\mdlev$
of K3 surfaces with a $\Lambda/\ell\Lambda$-level structure.
The map $\mdmar[r]\to T$ factors as 
\[\mdmar[r]\to \mdlev\times\labdrdual\to\md[r]\to T\]
(see Remark \ref{levmod}).
The map $\mdlev\times\labdrdual\to T$ is algebraic and equivariant
under the algebraic action of $\stO(\Lambda_d)$.
The induced algebraic morphism $(\mdlev\times\labdrdual)/\stO(\Lambda_d)\to T$
is the given map $\md[r]\to T$.
\end{proof}

The proof that $\mdr$ is a coarse moduli space for $\mdrfun$ is 
analogous.

\begin{proposition} \label{numberofcomps}
The moduli space $\mdr$ has at most $r\cdot\gcd(r,d)$ many connected components.
\end{proposition}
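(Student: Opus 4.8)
The plan is to reduce to a lattice–theoretic orbit count and then to isolate two invariants. By the description of the connected components obtained above, the components of $\mdr$ correspond bijectively to the $\stO(\labd)$–orbits on the set $\{w\in\labdrdual\mid\ord(w)=r\}$, so it suffices to bound the number of these orbits by $r\cdot\gcd(r,d)$. Since $\HH^2(S,\Z)$ is an even lattice, $d$ is even, so one may fix an identification $\labd\cong\langle-d\rangle\oplus M$ with $M\cong U^{\oplus 2}\oplus\epart$ unimodular; writing $g$ for a generator of the $\langle-d\rangle$–summand gives $\labddual\cong\Z\cdot\tfrac1d g\oplus M$ and $\Disc\labd\cong\Z/d\Z$. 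The factor $\gcd(r,d)$ comes from the first invariant: the projection $\labddual\twoheadrightarrow\Disc\labd$ induces an $\stO(\labd)$–equivariant homomorphism $\labdrdual=\labddual\otimes\Z/r\Z\to\Disc(\labd)\otimes\Z/r\Z\cong\Z/\gcd(r,d)\Z$, on whose target $\stO(\labd)$ acts trivially by the very definition of $\stO(\labd)$. Thus the orbits lie over the $\gcd(r,d)$ fibres of this map, and one is reduced to bounding by $r$ the number of $\stO(\labd)$–orbits of order–$r$ elements inside a single fibre.

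The remaining factor $r$ would come from a norm–type invariant. For $w\in\labdrdual$ and a lift $\tilde w\in\tfrac1r\labddual$, the rational number $\tilde w^2$ varies in a controlled way when the lift is changed by $\labddual$; since $\stO(\labd)$ acts by isometries, the orbit of $w$ determines the resulting class $q(w)$. Using the coordinates $w=(\bar c,\bar m)\in\Z/r\Z\times M/rM$, the identity $\tilde w^2=-\tfrac{c^2}{r^2 d}+\tfrac{m^2}{r^2}$, the evenness of $M$, and the constraint $\ord(w)=r$, a finite computation with the explicit lattice $\labd$ should show that $q$ takes at most $r$ values on the order–$r$ elements of a fixed fibre. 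Combined with the first invariant, this embeds the orbit set into a set of cardinality at most $r\cdot\gcd(r,d)$.

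What makes the two invariants genuinely classify the orbits — and what I expect to be the main obstacle — is the converse, transitivity statement: $\stO(\labd)$ should act transitively on the order–$r$ elements with prescribed image in $\Disc(\labd)\otimes\Z/r\Z$ and prescribed $q$. Here one uses crucially that $\labd$ has $U^{\oplus 2}$ as an orthogonal direct summand: by Eichler's criterion (see e.g.\ \cite{LecturesOnK3}), $\stO(\labd)$ acts transitively on the primitive vectors of $\labddual$ with fixed square and fixed class in $\Disc\labd$. Given two order–$r$ elements with equal invariants, one would lift each to a vector of $\labddual$, adjust the lifts within their residue classes modulo $r\labddual$ until they become primitive with equal square and equal discriminant class, and then conjugate by the element of $\stO(\labd)$ provided by Eichler's criterion; this element carries one $w$ to the other. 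The delicate point is the existence of such primitive lifts with the prescribed invariants — the factor $\tfrac1d$ in $\labddual$ and the attendant parity phenomena are exactly what make this step, as well as the value count in the preceding paragraph, require care.
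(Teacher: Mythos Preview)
Your reduction to counting $\stO(\labd)$--orbits on order--$r$ elements of $\labdrdual$ is correct, and your first invariant (the image in $\Disc(\labd)\otimes\Z/r\Z\cong\Z/\gcd(r,d)\Z$) is exactly right; in the paper's coordinates it is the residue $k\bmod\gcd(r,d)$. The structural difference is that you try to build an \emph{injection} from the orbit set into a set of size $r\cdot\gcd(r,d)$ via two invariants, which obliges you to prove the transitivity statement you flag as the main obstacle. The paper instead builds a \emph{surjection} going the other way: it fixes explicit vectors $w_{n,k}=\tfrac1r(e_1+nf_1+\tfrac{k}{d}\ell_d')$, uses Eichler's criterion once to show that every order--$r$ class is $\stO(\labd)$--equivalent to some $[w_{n,k}]$, and then shows by elementary manipulations that $[w_{n,k}]$ and $[w_{n',k'}]$ are equivalent whenever $n\equiv n'\bmod r$ and $k\equiv k'\bmod\gcd(r,d)$. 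This gives a surjection $\Z/r\Z\times\Z/\gcd(r,d)\Z\twoheadrightarrow\{\text{orbits}\}$ and hence the bound, with no transitivity argument and no need for your norm invariant $q$ at all.

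Your route could be completed, but it proves strictly more than required and the delicate step is real: the paper never claims that the pair $(n\bmod r,\,k\bmod\gcd(r,d))$ are orbit \emph{invariants}, and indeed its $d=r=2$ example shows $[w_{0,0}]$ and $[w_{1,0}]$ lie in the same orbit. What your approach would buy is an actual classification of the orbits; what the paper's buys is economy, since for an ``at most'' bound a surjection from a small index set suffices, and Eichler's criterion is used only to put one arbitrary element into normal form rather than to match two elements with prescribed invariants. If you pursue your version, note that the well-definedness of $q$ is more subtle than $\tilde w^2\bmod\tfrac{2}{r}\Z$: shifting the lift by $\tfrac1d\ell_d'$ changes $\tilde w^2$ by $-\tfrac{2k+r}{rd}$, so the relevant subgroup depends on $k$, and this is precisely where the ``parity phenomena'' you anticipate enter.
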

This follows directly from the following lemma.
Denote $\Lambda=\epart\oplus U_1\oplus U_2\oplus U_3$.
Let $\{e_i,f_i\}$ be the standard basis for the $i$-th copy of $U$.
Fix $\ell_d:=e_3+\tfrac{d}{2}f_3$ and $\ell_d':= e_3-\tfrac{d}{2}f_3$,
so $\labddual\cong\epart\oplus U_1\oplus U_2\oplus\langle\tfrac{1}{d}\ell_d'\rangle$. For integers $n$, $k$, we let
\[w_{n,k}:=\tfrac1r(e_1+nf_1+\tfrac{k}{d}\ell_d')\in\tfrac1r\labddual.\]
\begin{lemma}\label{orbits}
Every element of order $r$ in $\labdrdual$ is equivalent
under the action of $\stO(\Lambda_d)$
to $[w_{n,k}]$ for some $n,k\in\Z$.
Moreover, if $n\equiv n'\mod r$ and $k\equiv k'\mod\gcd(r,d)$,
then $[w_{n,k}]$ and $[w_{n',k'}]$ are equivalent.
\end{lemma}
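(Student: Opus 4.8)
The plan is to work inside the lattice $\labd = \epart\oplus U_1\oplus U_2\oplus\langle\ell_d'\rangle$, where $\langle\ell_d'\rangle$ has square $d$, and to use the $\stO(\labd)$-action together with explicit isometries to bring an arbitrary element of order $r$ in $\labdrdual = (\tfrac1r\labddual)/\labddual$ into the normal form $[w_{n,k}]$. First I would write a general element as $v = \tfrac1r(x + \tfrac{a}{d}\ell_d')$ with $x\in\epart\oplus U_1\oplus U_2$ and $a\in\Z$, modulo $\labddual$. Since $\epart\oplus U_1\oplus U_2$ is unimodular of signature containing a hyperbolic summand, one uses Eichler's criterion (or directly Eichler transvections, which lie in $\stO$) to move the $\epart\oplus U_2$-part of $x$ into $U_1$: transvections by isotropic vectors act transitively enough on primitive vectors of fixed divisibility that one can reduce $x$ modulo $r$ to lie in $U_1$, say $x\equiv e_1 + nf_1\mod r$ after scaling — here one must check that the relevant vector can be taken primitive and of the right ``length'' so that the orbit statement applies, which is where the $\gcd$ conditions will enter. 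This yields the first assertion: $v$ is equivalent to $[w_{n,k}]$ with $k\equiv a$.

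For the second (periodicity) assertion, I would exhibit explicit elements of $\stO(\labd)$ realizing the two reductions. For $n\mapsto n+r$: the Eichler transvection $t = E(e_1,f_1)$ or rather $E(f_1, r\cdot e_1^\vee)$-type map sends $e_1\mapsto e_1 + r(\dots)f_1$ while fixing everything else modulo the needed subgroup; since we only care about the class in $\labdrdual$, adding a multiple of $r$ to the $f_1$-coefficient changes nothing, so in fact I should instead produce an isometry shifting $e_1\mapsto e_1 + (n'-n)f_1$, valid when $n\equiv n'\bmod r$ because the correction is absorbed. Concretely, the map fixing $\epart\oplus U_2\oplus\langle\ell_d'\rangle$ and acting on $U_1$ by $e_1\mapsto e_1 + mf_1$, $f_1\mapsto f_1$ is not an isometry; instead one uses $E_{e_1}(f_1)$-transvections composed appropriately, or the isometry swapping and translating within $\epart\oplus U_1$. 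For $k\mapsto k+\gcd(r,d)$: writing $\gcd(r,d) = \lambda r + \mu d$, the term $\tfrac{1}{r}\cdot\tfrac{\gcd(r,d)}{d}\ell_d' = \tfrac{\lambda}{d}\ell_d' + \tfrac{\mu}{r}\ell_d'$; the first piece lies in $\labddual$ (integral multiple of $\tfrac1d\ell_d'$... more precisely in $\tfrac1r\labddual$ shifted correctly) and the second piece $\tfrac{\mu}{r}\ell_d'$ has integral pairing issues — here I would use an Eichler transvection involving $\ell_d'$ and an isotropic vector in $U_1\oplus U_2$ to absorb the change of $k$ by $\gcd(r,d)$ into a change of $n$ modulo $r$, which we have already shown is irrelevant.

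The main obstacle I anticipate is the bookkeeping in the second part: showing that changing $k$ by $\gcd(r,d)$ can be compensated purely by an $\stO(\labd)$-move, since naively $\tfrac1r\ell_d'$ is not in $\labddual$ and the reflection/transvection one writes down mixes the $U_1$-coordinates with the $\ell_d'$-coordinate in a way that must be tracked modulo both $r$ and $d$. The clean way is: take $\delta = \ell_d' - \tfrac{d}{?}(\dots)$ — better, take an isotropic $u = e_2 + c f_2\in U_2$ with $(u,u)=0$ and form the Eichler transvection $E(u,\ell_d')\colon x\mapsto x + (x,u)\ell_d' - (x,\ell_d')u - \tfrac{(\ell_d',\ell_d')}{2}(x,u)u$; applied to $w_{n,k}$ this shifts the $U_2$-part (which we then re-absorb into $U_1$ by the first part's argument) while shifting $k$ by a controlled amount. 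Choosing $c$ and the multiple of $u$ so that the induced shift in $k$ is exactly $\pm\gcd(r,d)$ modulo $d$ — using that $d$ times such a transvection's effect is visible — is the computational heart; once that identity is pinned down, combining with the $n$-periodicity from the first part and the fact that $[w_{n,k}]$ only depends on $(x\bmod r\labddual, a\bmod \dots)$ finishes the lemma.
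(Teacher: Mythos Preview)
Your overall strategy matches the paper's --- Eichler's criterion to normalize an arbitrary element, and a B\'ezout argument for the $k$-periodicity --- but there are two real gaps in the execution.

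First, Eichler's criterion is a statement about $\stO$-orbits of \emph{primitive vectors in a lattice}, classified by square and divisibility; it does not apply directly to a class in $\labdrdual$ or to the possibly imprimitive component $x$ of a representative. The paper's remedy is to clear denominators: write the order-$r$ element as $m[\tfrac1r x]$ with $x\in\labddual$ primitive and $\gcd(m,r)=1$, decompose $x=sy+\tfrac{t}{d}\ell_d'$, then multiply by $d_0=d/\gcd(d,t)$ so that $d_0x\in\labd$ is primitive, compute its divisibility in $\labd$ (it turns out to equal $d_0$), and only then invoke Eichler inside $\labd$. Without this lift you have no control over primitivity or divisibility, and your step ``move the $\epart\oplus U_2$-part of $x$ into $U_1$'' is not justified. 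The separate treatment of the factor $m$ is also needed: after scaling by $m$ the representative is no longer of the shape $w_{n,k}$, and the paper fixes this by adding $rf_1$ to restore primitivity of the $\labddual$-part and reapplying the first argument.

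Second, for the $k$-periodicity your B\'ezout decomposition $\gcd(r,d)=pr+qd$ is exactly right, and the observation that $\tfrac{pr}{rd}\ell_d'=\tfrac{p}{d}\ell_d'\in\labddual$ immediately gives $[w_{n,k+\gcd(r,d)}]=[w_{n,k+qd}]$. But from here you need no explicit transvection mixing $\ell_d'$ with $U_2$: since $k+qd\equiv k\bmod d$, this reduces to the statement (a direct by-product of the Eichler argument in part one) that $[w_{n,t}]$ is $\stO(\labd)$-equivalent to some $[w_{n',t'}]$ whenever $t\equiv t'\bmod d$. So the ``main obstacle'' you anticipate dissolves once part one is set up properly. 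Finally, the $n$-periodicity is vacuous: $w_{n+r,k}-w_{n,k}=f_1\in\labddual$, so $[w_{n,k}]=[w_{n+r,k}]$ already as elements of $\labdrdual$, and no isometry is required.
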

\begin{proof}
Elements in $\labdrdual$ of order $r$ are of the form
$m[\tfrac1r x]$ where $\gcd(m,r)=1$ and $x\in \labddual$ is primitive, so
$x=sy+\tfrac{t}{d}\ell_d'$ 
for some primitive $y\in\epart\oplus U_1\oplus U_2$ and integers $s$, $t$
with $\gcd(s,t)=1$.
Write $d=d_0\cdot\gcd(d,t)$ and $t=t_0\cdot\gcd(d,t)$. Then
$d_0x=d_0sy+t_0\ell_d'\in\labd$
is primitive and
\begin{align*}
(d_0W,\labd)&=\gcd\left( (d_0sy,\epart\oplus U_1\oplus U_2),(t_0\ell_d',\Z\ell_d')\right)\\
&=\gcd(d_0s,dt_0)\\
&=d_0.
\end{align*}
By Eichler's criterion \cite[Prop.~3.3]{GrHuSa},
$d_0x$ is equivalent under $\stO(\labd)$ to
$d_0(e_1+nf_1)+t_0\ell_d'$
for some $n$. So $\tfrac1r x$ is equivalent to $\tfrac1r(e_1+nf_1+\tfrac{t}{d}\ell_d')=w_{n,t}$.

Now $\tfrac{m}{r}x\equiv mw_{n,t}$ is equivalent modulo $\labddual$ to
$\tfrac1r(me_1+(mn+r)f_1+\tfrac{mt}{d}\ell_d')$. 
As $\gcd(r,m)=1$, the element 
$y=me_1+(mn+r)f_1+\tfrac{mt}{d}\ell_d'\in\labddual$ is primitive,
so by the above, $\tfrac1r y$ is equivalent under $\stO(\labd)$ to some $w_{n',t'}$. It follows that $m[\tfrac1r x]\in\labdrdual$ is equivalent to $[w_{n',t'}]$.

\medskip
Now note that if $t'\equiv t\mod d$, then $w_{n,t}$ is equivalent to
$w_{n',t'}$ for some $n'$ (by Eichler's criterion).
In particular, writing $\gcd(r,d)=pr+qd$, the class
\[[w_{n,\gcd(r,d)+t}]=[\tfrac1r(e_1+nf_1+(pr+qd+t)\tfrac{1}{d}\ell_d')]
=[\tfrac1r(e_1+nf_1+(qd+t)\tfrac{1}{d}\ell_d')]\]
in $\labdrdual$ is equivalent to $[\tfrac1r(e_1+n'f_1+\tfrac{t}{d}\ell_d')]=[w_{n',t}]$
for some $n'$.
This shows that every $[w_{n,k}]$ is equivalent to some $[w_{n',k'}]$ with
$0\leq n'< r$ and $0\leq k'<\gcd(r,d)$.
\end{proof}

\section{Twisted K3 surfaces and cubic fourfolds}
Recall that a smooth cubic fourfold $X$ is \emph{special}
if the lattice
$\HH^{2,2}(X)\cap \HH^4(X,\Z)$
has rank at least two. Special cubic fourfolds form a countably infinite union
of irreducible divisors $\mC_d$ in the moduli space of cubic fourfolds. 
Here $\mC_d\neq\emptyset$ if and only if $d>6$ and $d\equiv 0,2\mod 6$.
Hassett \cite{HassettPaper} showed that $X$ is in $\mC_d$ with $d$ satisfying
\[\text{\twostarsp}\htab \text{$d$ is even and not divisible by 4, 9,
or any odd prime $p\equiv 2$ mod 3}\]
if and only if there exists a polarized K3 surface $(S,L)$ of degree $d$
whose primitive cohomology $\HH^2(S,\Z)_{\prim}$ can be embedded
Hodge-isometrically into $\HH^4(X,\Z)$, up to a sign and a Tate twist.

\medskip
We will generalize this result to twisted K3 surfaces in
Theorem~\ref{LatPrecise} and Corollary~\ref{geomresult}.
An important ingredient will be the period map for polarized twisted K3 surfaces.

\subsection{Period maps for twisted K3 surfaces}\label{Periodmaps}

We have seen that the connected components of $\tM_d^r$ are of the form
\[\mw=(\mdmar\times\{[w]\})/\stab[w]\]
for $[w]\in\labdrdual$ of order $r$.
We will construct a period map from
$\mdmar\times\{[w]\}$ to the period domain $\mD(T_w)$ of the lattice
\[T_w:=\ker((w,-)\colon \labd\to\Q/\Z).\]

Let $(S,L,\varphi,[w])\in \mdmar\times\{[w]\}$. The corresponding twisted Hodge structure
$\widetilde{\HH}(S,[w],\Z)$ on $S$ is given as follows.
Let
\[w'=(\varphi^{\dual})^{-1}(w)\in\tfrac1r\HH^2(S,\Z)^{\dual}_{\prim}\subset\HH^2(S,\Q).\]
Then
$\widetilde{\HH}{}^{2,0}(S,[w])$ is $\C[\sigma + w'\wedge\sigma]$,
where $\sigma$ is a non-degenerate holomorphic 2-form on $S$. 
Let $\widetilde{\Lambda}=\Lambda\oplus U_4$ be the extended K3 lattice.
We can extend $\varphi$ to an isomorphism 
$\widetilde{\varphi}\colon \widetilde{\HH}(S,\Z)\to\widetilde{\Lambda}$
by sending $1\in\HH^0(S,\Z)$ to $e_4\in U_4$ and $1\in\HH^4(S,\Z)$ to $f_4\in U_4$.
Then
\[\widetilde{\varphi}(\sigma + w'\wedge\sigma)=
\varphi(\sigma)+(w,\varphi(\sigma))f_4.\]

Recall that for an even lattice $N$ and $B\in N$, 
the \emph{$B$-field shift} ${\exp(B)\in \tO(N\oplus U)}$ is defined by
\[z\mapsto z-(B.z)f,\:\: e\mapsto e+B-\tfrac{(B)^2}2f,\:\:f\mapsto f\]
for $z\in N$, 
where $\{e,f\}$ is the standard basis of the hyperbolic plane $U$.
For $B\in N_{\Q}$, we define $\exp(B)\in\tO((N\oplus U)_{\Q})$
by linear extension.
The discussion above shows that $\widetilde{\varphi}(\sigma + w'\wedge\sigma)=
\exp(w)\varphi(\sigma)$.
We thus obtain a map
\[\mQ_w\colon\mdmar\times\{[w]\}\to\mD\big((\exp(w)\labd)\cap\widetilde{\Lambda}\big)\]
sending $(S,L,\varphi,[w])$ to $[\widetilde{\varphi}(\HH^{2,0}(S,[w]))]$.

\medskip
The above depends on the choice of a representative $w\in\tfrac1r\labddual$
of $[w]\in\labdrdual$.
We can get rid of this choice in the following way.
First, the lattice $T_w$ is a finite index sublattice of $\labd$, 
so we have $\mD(T_w)=\mD(\labd)$.
Second, note that the map $\exp(w)$ gives an isomorphism
$T_w\cong (\exp(w)\labd)\cap\widetilde{\Lambda}$.
We see that $\mQ_w$ factors over the usual period map $\mP$ for $\mdmar$:
the diagram
\[\xymatrix{ \mdmar\times\{[w]\}\cong\mdmar \ar[d]_{\mQ_w}\ar[r]^-{\mP} & \mD(\labd)\ar[d]^{=}\\
\mD\big((\exp(w)\labd)\cap\widetilde{\Lambda}\big) \ar[r]_-{\cong} & \mD(T_w)
}\]
commutes.
Denote by $\mP_w$ the composition from $\mdmar\times\{[w]\}$ to $\mD(T_w)$.
It follows from the above diagram that $\mP_w$ is holomorphic and injective.

\medskip
Just like $\mw$, the quotient $\mD(T_w)/\stab[w]$ is a quasi-projective variety by \cite{BB}.
There is a commutative diagram
\[\xymatrix{\mdmar\times\{[w]\} \ar[d] \ar@{^{(}->}[r]^-{\mP_w} & \mD(T_w) \ar[d] \\
\mw \ar@{^{(}->}[r]^-{\overline{\mP}_w} & \mD(T_w)/\stab[w]
}\]
where $\overline{\mP}_w$ is algebraic by the same argument as in Corollary~\ref{ConnComp}
(note that when $\ell$ is a multiple of $r^2d$,
the group $\Gamma_{\ell}=\{g\in\stO(\labd)\mid g\equiv \id\mod \ell\}$
is contained in $\stab[w]$).

\medskip
Recall (see e.g.\ \cite[Rem.~6.4.5]{LecturesOnK3}) that
$\mD(T_w)\backslash\im\mP_w=\mD(\labd)\backslash\im \mP$ is a union of hyperplanes
$\bigcup_{\delta\in\Delta(\labd)}\delta^{\perp}$, where $\Delta(\labd)$
is the set of $(-2)$-classes in $\labd$.
It follows that $\mD(T_w)$ parametrizes periods of twisted \emph{quasi-polarized} K3 surfaces,
i.e.\ twisted K3 surfaces with a line bundle that is nef and big
(however, the corresponding moduli stack is not separated).
Hence, the quotient $\mD(T_w)/\stab[w]$ can be viewed as a moduli space of
quasi-polarized twisted K3 surfaces.

\subsection{The lattice \texorpdfstring{$T_w$}{Tw}}\label{Tw}
We denote by $\HH^4(X,\Z)(1)$ the middle cohomology of a cubic fourfold $X$
with the intersection product changed by a sign
and the weight of the Hodge structure shifted by two.
The cubic $X$ lies in the divisor $\mC_d$ if and only if there exists
a primitive sublattice 
\[K_d\subset\HH^{2,2}(X)\cap\HH^4(X,\Z)(1)\]
of rank two and discriminant $d$ containing the square of the hyperplane class.
The orthogonal complement $\kdperp\subset \HH^4(X,\Z)(1)$ has an induced Hodge structure
which determines $X$ 
when $X\in\mC_d$ is very general.
As a lattice, $\kdperp$ only depends on $d$.
Theorem~\ref{LatPrecise} below
tells us that for certain $d$ and $r$, and $w\in\hom(\Lambda_d, \Q/\Z)$ of order $r$,
the lattice $T_w$ is isomorphic to $K_{dr^2}^\perp$.

\medskip
We will see that the discriminant group of $T_w$
can always be generated by three elements. 
As $T_w$ has signature $(2,19)$, it follows that $T_w$
is determined by its discriminant group
and the quadratic form on it \cite[Cor.~1.13.3]{Nikulin}.
To prove Theorem~\ref{LatPrecise}, it thus suffices to determine when
\[\bigl(\Disc T_w,q_{T_w}\bigr)\cong\bigl(\Disc K_{dr^2}^\perp,q_{K_{dr^2}^\perp}\bigr).\]
Write $d'=dr^2$.
We will use the following result by Hassett (using our sign convention):
\begin{proposition}\textnormal{\cite[Prop.~3.2.5]{HassettPaper}}\label{HassettQuadForm}
When $d'\equiv 0\mod 6$, then $\Disc(\kdprimeperp)$ is isomorphic to
$\Z/3\Z\times\Z/\tfrac{d'}3\Z$,
which is cyclic unless $9$ divides $d'$.
One can choose generators $(1,0)$ and $(0,1)$ such that the quadratic form $q_{\kdprimeperp}$
satisfies $q_{\kdprimeperp}(1,0)=3/d'\mod 2\Z$
and ${q_{\kdprimeperp}(1,0)=-2/3\mod 2\Z}$.

When $d'\equiv 2\mod 6$, then $\Disc(\kdprimeperp)$ is $\Z/d'\Z$.
One can choose a generator $u$ such that
$q_{\kdprimeperp}(u) = (1-2d')/3d'\mod 2\Z$.
\end{proposition}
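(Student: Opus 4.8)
The statement to prove is Proposition~\ref{HassettQuadForm}, Hassett's description of the discriminant group and quadratic form of $\kdprimeperp$. Since the excerpt attributes this to \cite[Prop.~3.2.5]{HassettPaper}, the role here is to recall why it holds. The plan is to compute directly with the lattice $\HH^4(X,\Z)(1)$, which with our sign convention is the even lattice of signature $(1,20)$ isomorphic to $\langle 1\rangle\oplus \langle -1\rangle^{\oplus 20}$ after undoing the Tate twist, or more usefully $A_2 \oplus U^{\oplus 2} \oplus \epart$ with the opposite sign — the primitive middle cohomology $\HH^4_{\prim}(X,\Z)(1)$ is the well-known lattice of signature $(2,20)$, and $h^2$ has square $3$. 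First I would fix $K_{d'}\subset \HH^4(X,\Z)(1)$, a rank-two primitive sublattice of discriminant $d'$ containing $h^2$; its shape is forced, up to the $\tO$-action, by $d'$ and the congruence $d'\equiv 0,2 \bmod 6$: when $d'\equiv 2\bmod 6$, $K_{d'}=\langle h^2, v\rangle$ with appropriate Gram matrix, and when $d'\equiv 0\bmod 6$ a different normal form occurs (these are Hassett's labelled sublattices). The key computational tool is the exact sequence relating the discriminant forms of a primitive sublattice, its orthogonal complement, and the ambient unimodular-up-to-$A_2$ lattice.

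\textbf{Key steps.} Concretely I would proceed as follows. (1) Recall that $\Disc\bigl(\HH^4_{\prim}(X,\Z)(1)\bigr)\cong \Z/3\Z$ with quadratic form taking value $-2/3$ (this is standard: the primitive middle cohomology of a cubic fourfold differs from a unimodular lattice by the class $h^2$ of square $3$). (2) For $K_{d'}\ni h^2$ primitive of rank $2$ and discriminant $d'$, the orthogonal complement inside $\HH^4_{\prim}(X,\Z)(1)$ equals $\kdprimeperp$ (since $h^2\in K_{d'}$), and there is the gluing isomorphism: $\Disc K_{d'}$ and $\Disc \kdprimeperp$ both receive maps from a common overlattice, and modulo the contribution of $\Disc \HH^4_{\prim}(X,\Z)(1)\cong\Z/3\Z$ they are "anti-isometric" on the relevant subquotients. (3) Compute $\Disc K_{d'}$ and $q_{K_{d'}}$ directly from the $2\times 2$ Gram matrix in each congruence class; when $d'\equiv 0 \bmod 6$ the group is $\Z/3\Z \times \Z/(d'/3)\Z$, when $d'\equiv 2\bmod 6$ it is cyclic $\Z/d'\Z$. (4) Feed this through the gluing sequence to read off $\Disc\kdprimeperp$ and $q_{\kdprimeperp}$, choosing generators as in the statement and tracking the $2\Z$-ambiguity in the quadratic form carefully.

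\textbf{Main obstacle.} The delicate point is step (4): the bookkeeping of the discriminant-form gluing, in particular pinning down the generators $(1,0),(0,1)$ (resp.\ $u$) so that the quadratic form takes exactly the stated values $3/d'$, $-2/3$ (resp.\ $(1-2d')/3d'$) \emph{modulo $2\Z$}, not merely up to isometry. One must be careful that the $\Z/3\Z$-factor of $\Disc\kdprimeperp$ pairs correctly with the $\Z/3\Z=\Disc\HH^4_{\prim}(X,\Z)(1)$, and that the case distinction $9\mid d'$ versus $9\nmid d'$ (cyclic versus non-cyclic) is handled by checking whether the order-$3$ element splits off. Everything else — the normal forms for $K_{d'}$, the signature count $(2,19)$ forcing uniqueness of $\kdprimeperp$ via \cite[Cor.~1.13.3]{Nikulin} — is routine lattice theory; the sign convention (our $\HH^4(X,\Z)(1)$ versus Hassett's) only introduces an overall sign on the quadratic form, which is why the stated values differ from Hassett's original by $-1$.
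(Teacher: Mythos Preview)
The paper does not prove this proposition at all: it is quoted verbatim from \cite[Prop.~3.2.5]{HassettPaper} and used as a black box, with no argument supplied. So there is no ``paper's own proof'' to compare against; your sketch already goes beyond what the present paper does.

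That said, your outline is essentially Hassett's original computation, with one unnecessary detour. You route the gluing through $\HH^4_{\prim}(X,\Z)(1)$ and its discriminant $\Z/3\Z$, but this is not needed: the full lattice $\HH^4(X,\Z)(1)$ is unimodular, and since $h^2\in K_{d'}$ the orthogonal complement of $K_{d'}$ in the full lattice already equals $\kdprimeperp$. For a primitive sublattice of a unimodular lattice one has directly $(\Disc K_{d'},q_{K_{d'}})\cong(\Disc\kdprimeperp,-q_{\kdprimeperp})$, so step~(4) reduces to inverting the $2\times2$ Gram matrix of $K_{d'}$ and reading off generators; no three-term gluing sequence is required. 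With that simplification your steps (1)--(3) collapse into a single Gram-matrix calculation in each congruence class, which is exactly how Hassett does it.
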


\subsubsection{The discriminant group of \texorpdfstring{$T_w$}{Tw}}

Let $w\in\tfrac1r\labddual$ such that $[w]\in\labdrdual$ has order $r$.
We will describe the group $\Disc T_w$ and the quadratic form on it.
Note that if $g\in\stO(\labd)$, then
$g$ induces an isomorphism $T_w\cong T_{g(w)}$.
So by Lemma~\ref{orbits}, we can assume that
\[w=w_{n,k}=\tfrac1r(e_1+nf_1+\tfrac{k}{d}\ell_d')\]
for some $n$, $k$.
Then $T_w=\epart\oplus U_2\oplus T_0$, where
\[T_0=\{y\in U_1\oplus\Z\ell_d'\mid (y,w)\in \Z\}=\langle e_1-nf_1,rf_1,kf_1+\ell_d'\rangle.\]
Since $\epart\oplus U_2$ is unimodular, $\Disc T_w$ is isomorphic to $\Disc T_0$.
The intersection matrix of $T_0$ is (compare \cite[Lemma~2.12]{Reinecke})
\[M=\begin{pmatrix}
-2n & r & k\\
r & 0 & 0\\
k & 0 & -d
\end{pmatrix}\]
As the map $T_0\to T_0^{\dual}$ is given by the matrix $M^t=M$, we have
\[\Disc T_0=\Z/g_1\Z\times\Z/g_2\Z\times\Z/g_3\Z\]
where the invariant factors $g_i$ can be computed using the $i\times i$-minors
of $M$ \cite[Satz~2.9.6]{Bosch}:
\[g_1=\gcd(2n,r,k,d),\; g_2=\gcd(r^2,kr,rd,2nd-k^2)/g_1,\; g_3=dr^2/g_1g_2.\]

We obtain
\begin{proposition}\label{cyclic}
Let $w=w_{n,k}\in\tfrac1r\labddual$.
\begin{enumerate}[label=\emph{(\roman*)}]
\item The group $\Disc T_w$ is cyclic if and only if $\gcd(r,2nd-k^2)=1$.
\item We have
\[\Disc T_w\cong \Z/(r^2d/3)\Z\times\Z/3\Z\]
if and only if $\gcd(r,2nd-k^2)=3$, and if $3|d$ then $9\centernot|nd$.
\end{enumerate}
\end{proposition}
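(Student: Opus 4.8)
The plan is to funnel everything through the second determinantal divisor $d_2:=g_1g_2=\gcd(r^2,kr,rd,2nd-k^2)$ of $M$. Since $\Disc T_w\cong\Z/g_1\Z\times\Z/g_2\Z\times\Z/g_3\Z$ with $g_1\mid g_2\mid g_3$ and $g_1g_2g_3=r^2d$, the group $\Disc T_w$ is cyclic exactly when $g_1=g_2=1$ — if $g_2>1$, any prime dividing $g_2$ also divides $g_3$, so $\Disc T_w$ surjects onto $(\Z/p\Z)^2$ — i.e.\ exactly when $d_2=1$; and $\Disc T_w\cong\Z/(r^2d/3)\Z\times\Z/3\Z$, read as an identification of invariant factor decompositions, exactly when $g_1=1$ and $g_2=3$. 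Because $g_1\mid g_2$, this last condition is in turn equivalent to $d_2=3$: from $d_2=3$ one gets $g_1^2\mid g_1g_2=3$, hence $g_1=1$, $g_2=3$, $g_3=r^2d/3$, and since $3\mid d_2\mid r^2$ forces $3\mid r$ we have $9\mid r^2d$, so $g_2\mid g_3$ as required. Thus (i) and (ii) reduce to deciding when $d_2$ equals $1$ and when it equals $3$.

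First I would record that $d_2$ and $g:=\gcd(r,2nd-k^2)$ have exactly the same prime divisors. Indeed $d_2\mid r^2$ and $d_2\mid 2nd-k^2$, so any prime dividing $d_2$ divides both $r$ and $2nd-k^2$; conversely a prime dividing $r$ and $2nd-k^2$ divides each of $r^2,kr,rd,2nd-k^2$, hence $d_2$. In particular $d_2=1\iff g=1$, which is (i).

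For (ii), by the first paragraph it suffices to show that $d_2=3$ — equivalently, that $g$ is a power of $3$ and $v_3(d_2)=1$ — holds precisely when $g=3$ and ($3\mid d\Rightarrow 9\nmid nd$). Writing $v=v_3$ and $a=v(r)$, $b=v(k)$, $c=v(d)$, $e=v(2nd-k^2)$, one has $v(d_2)=\min(2a,\,a+b,\,a+c,\,e)=\min\bigl(a+\min(a,b,c),\,e\bigr)$ and $v(g)=\min(a,e)\le v(d_2)$. If $3\nmid d$ (so $c=0$) then $v(d_2)=\min(a,e)=v(g)$ at once. If $3\mid d$ and $3\mid d_2$, so $a\ge 1$ and $e\ge 1$, then $3\mid 2nd$ together with $3\mid 2nd-k^2$ forces $3\mid k$, so $v(k^2)=2b$ is even and $\ge 2$; hence $e=v(2nd-k^2)=1$ iff $v(2nd)=v(n)+c=1$, i.e.\ iff $9\nmid nd$, and when that holds $e=1$ while $a+\min(a,b,c)\ge 2$, so $v(d_2)=1$, whereas if $9\mid nd$ then $v(2nd)\ge 2$ forces $e\ge 2$ and so $v(d_2)\ge 2$. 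Combining the two cases gives $v(d_2)=1\iff v(g)=1$ and ($3\mid d\Rightarrow 9\nmid nd$); together with the "same primes" observation this reads $d_2=3\iff g=3$ and ($3\mid d\Rightarrow 9\nmid nd$), which is (ii).

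The only delicate point I anticipate is the $3\mid d$ branch of (ii): one has to exploit the parity of $v_3(k^2)$ in order to pin $v_3(2nd-k^2)$ down to exactly $1$, and that is exactly where the hypothesis $9\nmid nd$ enters. Some care is also needed with the distinction between $\Disc T_w$ being genuinely of rank two and being cyclic, which is why in (ii) the isomorphism $\Disc T_w\cong\Z/(r^2d/3)\Z\times\Z/3\Z$ must be understood as an identification of invariant factor decompositions rather than of abstract isomorphism types.
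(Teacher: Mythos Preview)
Your proof is correct and follows essentially the same approach as the paper. The paper does not give a separate argument for this proposition at all: it simply records the invariant factor formulas $g_1=\gcd(2n,r,k,d)$, $g_1g_2=\gcd(r^2,kr,rd,2nd-k^2)$, $g_1g_2g_3=dr^2$ via the minors of $M$ and then writes ``We obtain'' before stating the proposition. Your argument through the second determinantal divisor $d_2=g_1g_2$, the observation that $d_2$ and $\gcd(r,2nd-k^2)$ share the same prime support, and the $3$-adic case analysis is precisely the verification the paper leaves implicit; your remark that (ii) should be read as an identification of invariant factor decompositions is also a fair clarification of a point the paper does not make explicit.
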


In order to determine the quadratic form on $\Disc T_w$,
we write down explicit generators.
Consider the following elements of $\Disc T_0$:
\begin{align*}
 [f_1] &= [\tfrac{1}{r}(rf_1)]\\
[\ell_d'/d]&=[\tfrac{1}{d}(kf_1+\ell_d')-\tfrac{k}{rd}(rf_1)]\\
[w]&=[\tfrac{1}{r}(e_1-nf_1)+\tfrac{2nd-k^2}{r^2d}(rf_1)+\tfrac{k}{rd}(kf_1+\ell_d')].
\end{align*}
Note that
\[\ord [w] = \lcm\left(\tfrac{r^2d}{\gcd(r^2d,2nd-k^2)},\tfrac{rd}{\gcd(rd,k)},r\right).\]
One checks that the group generated by $[f_1]$, $[\ell_d'/d]$ and $[w]$ has
$r^2d$ many elements,
so it is the whole discriminant group:
\[\Disc T_w=\langle [f_1],[\ell_d'/d],[w]\rangle.\]

\begin{lemma}\label{choosegcd}
 If $\gcd(d,k,r)=s$, then there is an integer $p$ such that $\gcd(d,k+pr)=s$.
\end{lemma}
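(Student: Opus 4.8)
The plan is to reduce the statement to an elementary fact about coprimality by dividing out the common factor $s$ from $d$ and $k$. Write $d=s d_0$ and $k=s k_0$, so that $\gcd(d,k,r)=s$ means $\gcd(s d_0, s k_0, r)=s$, i.e.\ $\gcd(s d_0 / \gcd(s,r)?,\dots)$—more cleanly, it means $s\mid r$ is \emph{not} required, but $\gcd(d_0,k_0,r/\gcd(s,r))$-type conditions hold; the honest way to phrase it is that $s\mid\gcd(d,k)$ and the prime factorization of $\gcd(d,k)/s$ shares no prime with $r/\gcd(\cdot)$. To avoid getting lost in this, I would instead argue directly: I want $p$ with $\gcd(d,k+pr)=s$. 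Since $s\mid d$ and $s\mid k$, automatically $s\mid k+pr$ only if $s\mid pr$; but $s$ need not divide $r$, so first replace the target: it suffices to find $p$ such that $\gcd(d, k+pr) = s$, and I will look for such $p$ prime by prime.

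First I would observe that $s=\gcd(d,k,r)$ divides both $d$ and $k$, hence $s \mid \gcd(d,k)$; write $\gcd(d,k)=s m$ for some $m\ge 1$. The claim is equivalent to: there exists $p$ with $\gcd(d,k+pr)=s$. For each prime $q$, let $a_q=v_q(d)$, $b_q=v_q(k)$, $c_q=v_q(r)$ (the $q$-adic valuations), and $e_q=v_q(s)=\min(a_q,b_q,c_q)$. I want to choose $p$ so that for every prime $q$ with $q\mid d$, we have $v_q(\gcd(d,k+pr))=e_q$, i.e.\ $v_q(k+pr)\ge e_q$ always holds (it does, since $e_q\le b_q$ and $e_q\le c_q$) and moreover $v_q(k+pr)> e_q$ fails whenever $e_q<a_q$. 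So the real constraint is only at primes $q\mid d$ with $e_q<a_q$: for these I need $v_q(k+pr)=e_q$ exactly, equivalently $k+pr\not\equiv 0 \pmod{q^{e_q+1}}$.

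Next I would split these constrained primes into two types. \emph{Type A}: $e_q=c_q<a_q$, i.e.\ $q$ divides $r$ to the power $e_q$ exactly, and $v_q(k)\ge e_q$. Then $v_q(pr)\ge e_q$ with equality iff $q\nmid p$, and since $v_q(k)\ge e_q$ we need only ensure the sum has valuation exactly $e_q$; this can be arranged—if $v_q(k)>e_q$ just take $q\nmid p$, and if $v_q(k)=e_q$ we need $k/q^{e_q}+p r/q^{e_q}\not\equiv 0\pmod q$, a single nonzero-residue condition on $p\bmod q$. \emph{Type B}: $e_q=b_q<a_q$ and $e_q\le c_q$, with $b_q\le c_q$; if $b_q<c_q$ then $v_q(pr)>e_q$, so $v_q(k+pr)=b_q=e_q$ automatically for \emph{every} $p$—no constraint. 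If $b_q=c_q=e_q<a_q$ this is really Type A again. So in all cases each constrained prime $q$ imposes at most one congruence condition $p\not\equiv (\text{fixed residue})\pmod q$, these primes are finite in number and distinct, and by the Chinese Remainder Theorem a simultaneous solution $p$ exists. Then $\gcd(d,k+pr)=\prod_{q\mid d} q^{\min(a_q,v_q(k+pr))}=\prod q^{e_q}=s$, as desired.

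The main obstacle I anticipate is purely bookkeeping: making sure the case analysis at primes $q \mid d$ is exhaustive and that in the one genuinely delicate case ($v_q(k)=v_q(r)=v_q(s)<v_q(d)$) the congruence $p\not\equiv -\,(k/q^{e_q})(r/q^{e_q})^{-1}\pmod q$ is well-defined (it is, since $q\nmid r/q^{e_q}$). No single step is hard; the risk is only in missing a subcase or mis-stating a valuation inequality, so I would write the proof prime-by-prime from the start rather than trying to find one clever global $p$.
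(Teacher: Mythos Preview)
Your valuation/CRT argument (paragraphs two and three) is correct: for each prime $q \mid d$ with $e_q < a_q$ you extract at most one forbidden residue for $p$ modulo $q$, and since there are only finitely many such primes the Chinese Remainder Theorem produces a simultaneous solution. (Your first paragraph is muddled---note in particular that $s = \gcd(d,k,r)$ \emph{always} divides $r$, contrary to what you write---but you rightly abandon that line and the later argument does not depend on it.)

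The paper takes a different route: it sets $d_0 = d/\gcd(r,d)$, asserts that $\gcd(r, s d_0) = s$, uses B\'ezout to write $x s d_0 + y r = s$, and then exhibits the explicit choice $p = (1 - k/s)\,y$, checking directly that $k + pr = s\bigl(1 - x d_0 + x d_0\, k/s\bigr)$ and hence $\gcd(d, k+pr) = s$. This is more concise and delivers a closed formula for $p$, whereas your approach only proves existence. On the other hand, the intermediate assertion $\gcd(r, s d_0) = s$ is not valid in general---for example $d=4$, $r=2$, $k=1$ gives $s=1$, $d_0 = 2$, and $\gcd(r, s d_0) = 2$---so the paper's argument as written needs repair, while your prime-by-prime method is self-contained and robust.
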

\begin{proof}
 Let $d=d_0\gcd(r,d)$, so $\gcd(r,sd_0)=s$. Write $xsd_0+yr=s$. Then
 \begin{align*}
  \gcd\big(d,k+(1-\tfrac{k}{s})yr\big) 
  &= \gcd\big(sd_0,k+(1-\tfrac{k}{s})(s-xsd_0)\big)\\
  &=s\gcd\left(d_0,1-xd_0+xd_0\tfrac{k}{s}\right)\\
  &=s. \qedhere
 \end{align*}
\end{proof}

First assume $\Disc T_w$ is cyclic, so $\gcd(r,2nd-k^2)=1$.
In particular, this implies ${\gcd(r,d,k)=1}$.
By Lemma~\ref{choosegcd} there exists a $p$ such that $\gcd(d,k+pr)=1$.
Since $T_{w_{n,k}}\cong T_{w_{n,k+pr}}$, we can replace $k$ by $k+pr$.
Then we have ${\gcd(r^2d,2nd-k^2)=1}$;
 hence, $[w]$ generates $\Disc T_w$.
 So the quadratic form $q_{T_w}$ on $\Disc T_w$ is determined by
\[q_{T_w}([w])=[(w)^2]=\tfrac{1}{r^2d}(2nd-k^2)\mod 2\Z.\]

\medskip
Next, assume $\Disc T_w\cong \Z/(r^2d/3)\Z\times\Z/3\Z$.
If $3$ divides $d$, we have ${\gcd(r,2nd-k^2)=3}$, and $9\centernot| nd$ implies 
$9\centernot|2nd-k^2$.
It follows that $\gcd(r^2,2nd-k^2)=3$, so $[w]$ generates $\Z/(dr^2/3)\Z$.
As a generator of the factor $\Z/3\Z$, we take the element
\[u:=\tfrac{k}{3}[f_1]-\tfrac{d}{3}[\ell_d'/d]=\tfrac{1}{3}[kf_1+\ell_d'].\]
We have $(u)^2=-\tfrac{d}{9}\mod 2\Z$.

If $3$ does not divide $d$, we may have $9|2nd-k^2$, but this implies $9\centernot|r$.
Using $T_{w_{n,k}}\cong T_{w_{n+r,k}}$,
we may replace $n$ by $n+r$ and obtain $9\centernot|2nd-k^2$.
This gives $\gcd(r^2,2nd-k^2)=3$, so $[w]$ generates $\Z/(dr^2/3)\Z$.
As a generator of the factor $\Z/3\Z$, we take
\[u':=\tfrac{rd}{3}[w]-\tfrac{2nd-k^2}{3}[f_1]=\tfrac{1}{3}([d(e_1-nf_1)+k(kf_1+\ell_d')]).\]
We have $(u')^2=-\tfrac{d}{9}(2nd-k^2)\mod 2\Z$.

\begin{corollary}\label{generatorsDisc}
Let $w=w_{n,k}\in\tfrac1r\labddual$.
\begin{enumerate}[label=\emph{(\roman*)}]
\item If $\Disc T_w$ is cyclic, there exists a generator $t$ such that
\[q_{T_w}(t)=\tfrac{1}{r^2d}(2nd-k^2)\mod 2\Z.\]
\item If $\Disc T_w\cong \Z/(r^2d/3)\Z\times\Z/3\Z$ and $3\centernot| d$,
there exist generators $(1,0)$ and $(0,1)$ such that
\[q_{T_w}(1,0)=\tfrac{1}{r^2d}(2nd-k^2)\mod 2\Z\]
and
\[q_{T_w}(0,1)=-\tfrac{d}{9}(2nd-k^2)\mod 2\Z.\]
\item If $\Disc T_w\cong \Z/(r^2d/3)\Z\times\Z/3\Z$ and $3|d$,
there exist generators $(1,0)$ and $(0,1)$ such that
\[q_{T_w}(1,0)=\tfrac{1}{r^2d}(2nd-k^2)\mod 2\Z\]
and
\[q_{T_w}(0,1)=-\tfrac{d}{9}\mod 2\Z.\]
\end{enumerate}
\end{corollary}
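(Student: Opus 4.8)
The statement assembles the three computations of $q_{T_w}$ carried out in the paragraphs immediately preceding it, so the plan is to organize them into the three cases while choosing, in each case, a convenient representative of the $\stO(\Lambda_d)$-orbit of $w$. What makes this legitimate is that any $g\in\stO(\Lambda_d)$ induces an isometry $T_w\cong T_{g(w)}$, so we may alter the pair $(n,k)$ freely within its orbit; concretely, Lemma~\ref{orbits} (via Eichler's criterion) shows that replacing $k$ by $k+pr$ or $n$ by $n+r$ does not change $T_w$ up to isometry. For (i), I would start from $\gcd(r,2nd-k^2)=1$, note that this forces $\gcd(r,d,k)=1$, apply Lemma~\ref{choosegcd} to find $p$ with $\gcd(d,k+pr)=1$, and replace $k$ by $k+pr$. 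Then $\gcd(r^2d,2nd-k^2)=1$, so by the order formula for $[w]$ recorded above one has $\ord[w]=r^2d=|\Disc T_w|$; hence $t:=[w]$ generates, and $q_{T_w}(t)=[(w)^2]=\tfrac1{r^2d}(2nd-k^2)\bmod 2\Z$.

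For (ii) and (iii) the discriminant group is $\Z/(r^2d/3)\Z\times\Z/3\Z$, and in both cases the first step is to arrange $\gcd(r^2,2nd-k^2)=3$, which is exactly the condition that makes $[w]$ generate the cyclic factor $\Z/(r^2d/3)\Z$ with $q_{T_w}([w])=\tfrac1{r^2d}(2nd-k^2)$ as in (i). When $3\mid d$, this follows directly from $\gcd(r,2nd-k^2)=3$ and $9\nmid nd$, which together give $9\nmid 2nd-k^2$. When $3\nmid d$, one may instead have $9\mid 2nd-k^2$; but then $9\nmid r$ (else $9$ would divide $\gcd(r,2nd-k^2)=3$), so replacing $n$ by $n+r$ restores $9\nmid 2nd-k^2$ and hence $\gcd(r^2,2nd-k^2)=3$. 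Taking $[w]$ as the first generator, I would complete it by the second generator $u=\tfrac13[kf_1+\ell_d']$ when $3\mid d$, respectively $u'=\tfrac13[d(e_1-nf_1)+k(kf_1+\ell_d')]$ when $3\nmid d$; a short verification against the presentation $\Disc T_w=\langle[f_1],[\ell_d'/d],[w]\rangle$ shows each has order $3$ and is independent of $[w]$, and their squares were already computed to be $-\tfrac d9$ and $-\tfrac d9(2nd-k^2)$ modulo $2\Z$, respectively.

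I expect the only real obstacle to be bookkeeping rather than ideas: one must check that each substitution preserves the hypothesis of the case in which it is used — both $k\mapsto k+pr$ and $n\mapsto n+r$ leave $2nd-k^2$ unchanged modulo $r$, hence preserve $\gcd(r,2nd-k^2)$, and the step $n\mapsto n+r$ adjusts $2nd-k^2$ modulo $9$ exactly so as to kill divisibility by $9$ while keeping the mod-$3$ data — and that after the substitution the chosen elements really generate $\Disc T_w$ with the stated orders, which uses the invariant-factor description $g_1=\gcd(2n,r,k,d)$, $g_2=\gcd(r^2,kr,rd,2nd-k^2)/g_1$, $g_3=dr^2/g_1g_2$ together with the order formula for $[w]$. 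Everything else is the arithmetic of $\Q/2\Z$-valued quadratic forms already done above.
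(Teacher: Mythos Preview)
Your proposal is correct and follows the paper's own argument essentially step for step: the same substitutions $k\mapsto k+pr$ (via Lemma~\ref{choosegcd}) and $n\mapsto n+r$, the same generator $[w]$ for the large cyclic factor, and the same auxiliary generators $u$ and $u'$ with the stated squares. One minor simplification: the replacements $k\mapsto k+pr$ and $n\mapsto n+r$ change $w$ by an element of $\labddual$, so $[w]\in\labdrdual$ and hence $T_w$ are literally unchanged---you do not need to invoke Lemma~\ref{orbits} or Eichler's criterion here.
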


\subsection{Existence of associated twisted K3 surfaces}
Consider the following condition on $d'\in\Z$.
\[\text{\twostarprimesp}\htab \text{$d'=dr^2$ for some integers $d$ and $r$,
where $d$ satisfies \twostar.}\]
The results of Section~\ref{Tw} allow us to prove the following.

\begin{theorem}\label{LatPrecise}
The number $d'$ satisfies \twostarprimesp if and only if for every decomposition $d'=dr^2$
with $d$ satisfying \twostar,
there exists an element $[w]\in \labdrdual$ of order $r$ such that
$\kdprimeperp$ is isomorphic to $\ker((w,-)\colon \labd\to\Q/\Z)$.
\end{theorem}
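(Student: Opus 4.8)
The plan is to turn the lattice isomorphism $\kdprimeperp\cong T_w$ into an identity of discriminant quadratic forms, and then into the solvability of an explicit quadratic congruence whose only obstruction is condition \twostar. As recalled above, $\kdprimeperp$ and $T_w$ both have signature $(2,19)$ and a discriminant group generated by at most three elements, so by \cite[Cor.~1.13.3]{Nikulin} an isomorphism exists as soon as $(\Disc T_w,q_{T_w})\cong(\Disc\kdprimeperp,q_{\kdprimeperp})$. Since \twostarprime{} for $d'$ means, by definition, that a decomposition $d'=dr^2$ with $d$ satisfying \twostar{} exists, the ``$\Leftarrow$'' direction is built into the definition and I concentrate on ``$\Rightarrow$''. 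There I fix such a decomposition and, using Lemma~\ref{orbits} together with the fact that $g\in\stO(\labd)$ induces $T_w\cong T_{g(w)}$, reduce to producing integers $n,k$ for which $w=w_{n,k}$ works. Note that $[w_{n,k}]$ automatically has order $r$, because $e_1+nf_1+\tfrac{k}{d}\ell_d'$ is primitive in $\labddual$.

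The argument then splits according to the class of $d'$ modulo $6$, mirroring Hassett's dichotomy in Proposition~\ref{HassettQuadForm}. If $d'\equiv 2\mod 6$, I choose $n,k$ with $\gcd(r,2nd-k^2)=1$, so that by Proposition~\ref{cyclic}(i) and Corollary~\ref{generatorsDisc}(i) the group $\Disc T_w$ is cyclic of order $d'=r^2d$ with generator value $\tfrac{1}{r^2d}(2nd-k^2)$; comparing with the cyclic form of Proposition~\ref{HassettQuadForm}, the two agree precisely when $\tfrac{1}{r^2d}(2nd-k^2)$ and $\tfrac{1-2d'}{3d'}$ lie in the same square class, i.e.\ when $2nd-k^2\equiv c^2\tfrac{1-2d'}{3}\mod 2r^2d$ for some unit $c$. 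If $d'\equiv 0\mod 6$ and $9\nmid d'$ the target group is again cyclic, and I argue in the same way, matching the square class of $\tfrac{1}{r^2d}(2nd-k^2)$ with that of the value of $q_{\kdprimeperp}$ on a generator of $\Disc\kdprimeperp$. If $d'\equiv 0\mod 6$ and $9\mid d'$, I instead take $n,k$ with $\gcd(r,2nd-k^2)=3$, and $9\nmid nd$ when $3\mid d$; then Proposition~\ref{cyclic}(ii) gives $\Disc T_w\cong\Z/(r^2d/3)\Z\times\Z/3\Z$, which has the same shape as $\Disc\kdprimeperp$, and Corollary~\ref{generatorsDisc}(ii)--(iii) together with Proposition~\ref{HassettQuadForm} reduces the problem to matching the values of the two forms on the chosen pairs of generators.

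In every case what remains is to decide whether a fixed quadratic congruence modulo $2r^2d$ has a solution $(n,k)$. The key observation is that the image of $(n,k)\mapsto 2nd-k^2$ modulo $2r^2d$ is exactly the set of residues $x$ for which $-x$ is a square modulo $2d$; so by the Chinese Remainder Theorem the congruence reduces to a list of square conditions, one for each prime dividing $2d$. At $p=2$ the condition is satisfied precisely because $4\nmid d$; at $p=3$ it is arranged using $9\nmid d$ together with the fact that every odd prime factor of $d$ is $\equiv 1\mod 3$ (so that $d$, or $d/3$, is $\equiv 2\mod 3$); and at an odd prime $p\mid d$ with $p\neq 3$ the condition says that $-3$ is a quadratic residue modulo $p$, which holds if and only if $p\equiv 1\mod 3$. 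Hensel's lemma then lifts each local square to the required prime power, and the Chinese Remainder Theorem assembles a global pair $(n,k)$; run in reverse, the same computation shows that the congruence has no solution once \twostar{} fails, so the condition on $d$ is sharp. I expect the main difficulty to be exactly this local analysis: bringing each case's congruence into a form in which the square conditions at $2$, at $3$, and at the primes $p\equiv 2\mod 3$ (where $-3$ fails to be a square) can be read off, while simultaneously keeping the $\gcd$-constraints that pin down the isomorphism type of $\Disc T_w$.
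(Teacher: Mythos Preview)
Your overall strategy coincides with the paper's: reduce via \cite[Cor.~1.13.3]{Nikulin} to an isomorphism of discriminant forms, split into the cyclic and non-cyclic cases according to Proposition~\ref{HassettQuadForm}, and translate the matching of $q_{T_w}$ with $q_{\kdprimeperp}$ into a quadratic congruence whose solvability is controlled by Lemma~\ref{twostarquadrec} through CRT and Hensel. The paper carries this out by fixing $k$ explicitly (writing $r=2^sqr_0$ and taking $k\in\{r_0,3r_0\}$), then solving the resulting congruence for $n$ modulo the various prime powers; your key observation that the image of $(n,k)\mapsto 2nd-k^2$ in $\Z/2r^2d\Z$ is exactly $\{x:-x\text{ a square mod }2d\}$ is a clean repackaging of the same computation, and has the pleasant effect of making the primes of $r$ not dividing $d$ disappear from the analysis without the explicit bookkeeping the paper does. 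Note that in the cyclic case the target value $c^2\tfrac{1-2d'}{3}$ is automatically a unit mod $r^2d$, so the solution $(n,k)$ you produce has $\gcd(r^2d,2nd-k^2)=1$ and $[w]$ genuinely generates $\Disc T_w$; you should say this, since Corollary~\ref{generatorsDisc}(i) only applies after such an adjustment.

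One point to correct: your claim that the ``$\Leftarrow$'' direction is built into the definition is wrong as stated. If $d'$ admits \emph{no} decomposition $dr^2$ with $d$ satisfying \twostar, the right-hand side of the theorem is vacuously true while the left-hand side is false. The paper does prove a genuine converse (Propositions~\ref{Cyctwostar} and \ref{NotCyctwostar}): for \emph{any} decomposition $d'=dr^2$, the existence of $[w]$ with $T_w\cong\kdprimeperp$ forces $d$ to satisfy \twostar. Your final sentence (``run in reverse'') is exactly this, so you have the right content---just do not dismiss the direction up front. In the non-cyclic case you should also verify, as the paper does at the end of Propositions~\ref{NotCycexists1} and \ref{NotCycexists2}, that the $\Z/3\Z$-factor matches; this is a separate short check and does not fall out of your image observation alone.
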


For a cubic fourfold $X\in\mC_{d'}$, the inclusion 
$\kdprimeperp\subset\HH^4(X,\Z)(1)$
gives an induced Hodge structure of K3 type on $\kdprimeperp$ and thus on $T_w=\ker\, (w,-)$,
yielding a point $x$ in the period domain $\mD(T_w)$.
In \cite{Voisin}, it was shown that for a smooth cubic fourfold $X$,
there are no classes in $\HH^4(X,\Z)_{\prim}\cap\HH^{2,2}(X)$ of square 2.
It follows that the class of $x$ in $\mD(T_w)/\stab[w]$ lies in the image
of the period map $\mP_w$.
As a consequence, we obtain
\begin{corollary}\label{geomresult}
A cubic fourfold $X$ is in $\mC_{d'}$ for some $d'$ satisfying \twostarprimesp if and only if
for every decomposition $d'=dr^2$ with $d$ satisfying \twostar,
there exists a polarized K3 surface $(S,L)$ of degree $d$
and an element $\alpha\in\hom(\HH^2(S,\Z)_{\prim},\Q/\Z)$ of order $r$ such that
$\kdprimeperp$ is Hodge isometric to $\ker \alpha$.
\end{corollary}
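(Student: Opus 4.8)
The plan is to deduce Corollary~\ref{geomresult} from Theorem~\ref{LatPrecise} by translating the lattice statement into a Hodge-theoretic one. First I would recall that by Theorem~\ref{LatPrecise}, the condition \twostarprime on $d'$ is equivalent to: for every decomposition $d'=dr^2$ with $d$ satisfying \twostar, there exists $[w]\in\labdrdual$ of order $r$ with $\kdprimeperp\cong T_w$ as \emph{lattices}. So I must upgrade this abstract isometry to one of Hodge structures and then reinterpret it on the K3 side.

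For the direction ``$X\in\mC_{d'}$ $\Rightarrow$ associated twisted K3 exists'': given a decomposition $d'=dr^2$, Theorem~\ref{LatPrecise} furnishes $[w]$ of order $r$ with a lattice isometry $\kdprimeperp\cong T_w$. The inclusion $\kdprimeperp\subset\HH^4(X,\Z)(1)$ endows $\kdprimeperp$, hence $T_w$, with a Hodge structure of K3 type, giving a point $x\in\mD(T_w)$. The key step is to show $x$ lies in the image of the period map $\mP_w\colon\mdmar\times\{[w]\}\hookrightarrow\mD(T_w)$, equivalently that the class of $x$ in $\mD(T_w)/\stab[w]$ is in $\im\overline{\mP}_w$. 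By the description recalled in Section~\ref{Periodmaps}, $\mD(T_w)\setminus\im\mP_w$ is the union of hyperplanes $\delta^\perp$ for $(-2)$-classes $\delta\in\labd$; since a point of $\mD(T_w)$ lies on some $\delta^\perp$ iff the corresponding Hodge structure has a $(-2)$-class of type $(1,1)$, and Voisin's result \cite{Voisin} says $\HH^4(X,\Z)_{\prim}\cap\HH^{2,2}(X)$ contains no class of square $2$ (so $\kdprimeperp$ contains no $(1,1)$-class of square $-2$), the point $x$ avoids all these hyperplanes. Hence $x\in\im\mP_w$, so there is a marked polarized K3 surface $(S,L,\varphi)$ of degree $d$ with $\mP_w(S,L,\varphi,[w])=x$. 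Setting $\alpha:=(\varphi^\dual)^{-1}(w)\in\hom(\HH^2(S,\Z)_{\prim},\Q/\Z)$, which has order $r$ since $[w]$ does, the very construction of $\mP_w$ via the twisted Hodge structure $\widetilde\HH(S,[w],\Z)$ shows that $\ker\alpha\cong T_w$ is Hodge-isometric to $\kdprimeperp$ (with its Hodge structure from $X$), using that $T_w$ is a finite-index sublattice of $\labd$ so $\mD(T_w)=\mD(\labd)$ carries the same Hodge structure.

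For the converse direction: suppose that for every decomposition $d'=dr^2$ with $d$ satisfying \twostar, there is such a pair $(S,L)$ and $\alpha$ of order $r$ with $\kdprimeperp$ Hodge-isometric to $\ker\alpha$. Forgetting the Hodge structures, this in particular gives a \emph{lattice} isometry $\kdprimeperp\cong\ker\alpha$; choosing a marking $\varphi$ of $(S,L)$ and setting $[w]:=\varphi_r(\alpha)\in\labdrdual$, which has order $r$, we get $\ker\alpha\cong T_w$ as lattices, so $\kdprimeperp\cong T_w$. Thus the lattice condition of Theorem~\ref{LatPrecise} holds for some (indeed the given) decomposition; but actually we need it for \emph{every} decomposition, which is exactly the hypothesis we assumed. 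Therefore Theorem~\ref{LatPrecise} applies and yields that $d'$ satisfies \twostarprime, and then Hassett's description of the $\mC_{d'}$ together with the first direction shows $X\in\mC_{d'}$ — more directly, one observes that the existence of a single associated twisted K3 (as produced above or assumed) already places $X$ in $\mC_{d'}$, since the primitive embedding $\ker\alpha\cong\kdprimeperp\hookrightarrow\HH^4(X,\Z)(1)$ reconstructs the rank-two lattice $K_{d'}$ containing the square of the hyperplane class.

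The main obstacle is the passage from the abstract lattice isometry $\kdprimeperp\cong T_w$ to a \emph{Hodge} isometry matching the geometry of $X$ — i.e.\ the surjectivity-type argument showing the period point $x$ lands in $\im\mP_w$. This is precisely where Voisin's no-$(-2)$-classes result \cite{Voisin} enters, excluding exactly the hyperplane sections $\delta^\perp$ that form the complement of the period image; one must also be slightly careful that the isometry $\kdprimeperp\cong T_w$ can be chosen compatibly with, or at least transported to respect, the Hodge structures via the $\stab[w]$-action, which is harmless since we work modulo $\stab[w]$ on the period domain.
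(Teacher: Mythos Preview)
Your proposal is correct and follows essentially the same approach as the paper: transport the Hodge structure from $\kdprimeperp\subset\HH^4(X,\Z)(1)$ to $T_w$ via Theorem~\ref{LatPrecise}, then invoke Voisin's result \cite{Voisin} that $\HH^4(X,\Z)_{\prim}\cap\HH^{2,2}(X)$ contains no classes of square $2$ to show the resulting period point avoids the hyperplanes $\delta^\perp$ and hence lies in $\im\mP_w$. The paper treats the converse as immediate (the Hodge structure on $\kdprimeperp$ already presupposes $X\in\mC_{d'}$), so your elaboration there is unnecessary but harmless.
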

We will say that the twisted K3 surface in Corollary~\ref{geomresult}
is \emph{associated} to $X$. 

\begin{remark}
One can show that this notion of associated twisted K3 surfaces is the same 
as the one given by Huybrechts \cite{K3category}.
He relates the full cohomology $\widetilde{\HH}(S,\alpha,\Z)$ to the Hodge structure
$\widetilde{\HH}(\mA_X,\Z)$
of K3 type associated to the K3 category $\mA_X\subset\Db(X)$,
which was introduced in \cite{AddingtonThomas}.

To be precise, Huybrechts shows that a cubic $X$ is in $\mC_{d'}$
for some $d'$ satisfying \twostarprimesp
if and only if there is a twisted K3 surface $(S,\alpha)$ such that $\widetilde{\HH}(\mA_X,\Z)$
is Hodge isometric to $\widetilde{\HH}(S,\alpha,\Z)$.
When $S$ has Picard number one, it follows that ${\kdperp\subset\HH^4(X,\Z)(1)}$ is Hodge isometric
to $\ker(\alpha\colon \HH^2(S,\Z)_{\prim}\to\Q/\Z)$
(these are the transcendental parts of $\widetilde{\HH}(\mA_X,\Z)$ and $\widetilde{\HH}(S,\alpha,\Z)$).
When $\rho(S)>1$, the same holds for some lift of ${\alpha\in\br(S)}$
to $\hom(\HH^2(S,\Z)_{\prim},\Q/\Z)$.
Vice versa, one can show that a Hodge isometry
${\kdprimeperp\cong\ker(\alpha\colon \HH^2(S,\Z)_{\prim}\to\Q/\Z)}$
always extends to ${\widetilde{\HH}(\mA_X,\Z)\cong\widetilde{\HH}(S,\alpha,\Z)}$,
see Proposition~\ref{discsurj}.

In particular, this proves a weaker version of Corollary~\ref{geomresult}, 
replacing ``every decomposition'' with ``some decomposition''.
\end{remark}

We prove Theorem~\ref{LatPrecise} by comparing the quadratic forms
on $\Disc \kdprimeperp$ and $\Disc T_w$.
We distinguish the cases when the groups are cyclic and non-cyclic.
We will use the following statements, which
follow from quadratic reciprocity \cite[proof of Prop.~5.1.4]{HassettPaper}.
\begin{lemma}\label{twostarquadrec}
When $d\equiv 2\mod 6$, then $d$ satisfies \twostarsp if and only if
$-3$ is a square modulo $2d$. When $d\equiv 0\mod 6$, write $d=6t$. Then
$d$ satisfies \twostarsp if and only if $-3$ is a square modulo $4t$
and $4t$ is a square modulo 3.
\end{lemma}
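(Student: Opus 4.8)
The plan is to translate each side of the asserted equivalences into a condition on the prime factorisation of $d$, and then to compare them directly. First I would unpack \twostarsp itself: it says that $d$ is even with $v_2(d)=1$ (not divisible by $4$), that $v_3(d)\le 1$ (not divisible by $9$), and that every prime divisor $p\ge 5$ of $d$ satisfies $p\equiv 1\pmod 3$.

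Next I would examine the statement ``$-3$ is a square modulo $N$'' for the relevant moduli $N=2d$ and $N=4t$. By the Chinese Remainder Theorem this is equivalent to the conjunction, over all primes $p\mid N$, of ``$-3$ is a square modulo $p^{v_p(N)}$''. For an odd prime $p\ne 3$, Hensel's lemma reduces this to ``$-3$ is a square modulo $p$'', that is, to $\left(\tfrac{-3}{p}\right)=1$; combining the two supplements of the quadratic reciprocity law gives $\left(\tfrac{-3}{p}\right)=\left(\tfrac{p}{3}\right)$, so this holds exactly when $p\equiv 1\pmod 3$. The primes $2$ and $3$ are then handled by hand: since $-3\equiv 1\pmod 4$ is a square while $-3\equiv 5\pmod 8$ is not, the condition forces $v_2(N)\le 2$; and since $-3\equiv 6\pmod 9$ is not a square, it forces $v_3(N)\le 1$.

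Assembling these local contributions gives both equivalences. For $d\equiv 2\pmod 6$ one has $3\nmid d$, so the modulus $2d$ involves only the prime $2$ (the $2$-adic condition being $4\nmid d$) and the odd primes dividing $d$ (the condition being that each is $\equiv 1\pmod 3$); together with the automatic facts that $d$ is even and $9\nmid d$, this is precisely \twostar. For $d\equiv 0\pmod 6$, writing $d=6t$, the modulus $4t$ carries the prime $2$ and the primes $\ge 5$ dividing $d$, and ``$-3$ a square modulo $4t$'' becomes ``$t$ odd and every prime $\ge 5$ dividing $t$ is $\equiv 1\pmod 3$''; the prime $3$ divides $d$ but its contribution is not seen through the Legendre symbol, which is exactly what the extra clause ``$4t$ is a square modulo $3$'' supplies, so that the conjunction of the two conditions matches \twostarsp for $d=6t$.

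The only delicate step, and the one I expect to be the main obstacle, is the arithmetic at the small primes $2$ and $3$: there neither Hensel's lemma nor the evaluation of $\left(\tfrac{-3}{p}\right)$ is available, so one must inspect squares modulo $4$, $8$ and $9$ directly and take care to line them up with the clauses ``$4\nmid d$'' and ``$9\nmid d$'' of \twostarsp and with the auxiliary condition at $3$. Everything else is a routine bookkeeping exercise combining the Chinese Remainder Theorem with the standard value of $\left(\tfrac{-3}{p}\right)$.
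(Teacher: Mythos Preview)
Your approach is correct and is precisely the quadratic reciprocity argument the paper defers to Hassett for; the paper itself gives no details beyond the citation, so you are supplying what the paper omits.

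One point of care in the second case: as written, your sentence ``the modulus $4t$ carries the prime $2$ and the primes $\ge 5$ dividing $d$'' presupposes $3\nmid t$, which has not yet been established. The clean way to close this is to read ``$4t$ is a square modulo $3$'' as $4t\equiv 1\pmod 3$ (i.e.\ a nonzero quadratic residue), which immediately forces $3\nmid t$ and hence $9\nmid d$; then your local analysis at the remaining primes goes through verbatim. Without that reading, $d=18$ (so $t=3$, $4t=12$) would satisfy both conditions in the naive sense while failing \twostar. This is purely a matter of making the convention explicit; once stated, the argument is complete.
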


\subsubsection{Proof of Theorem~\ref{LatPrecise}, cyclic case}
Assuming that $\Disc \kdprimeperp$ is cyclic,
we will show that $d'=dr^2$ with $d$ satisfying \twostarprimesp
if and only if there exists a $[w]\in\labdrdual$ 
of order $r$ such that $\kdprimeperp\cong T_w$.
The proof consists of Propositions~\ref{Cyctwostar} and \ref{Cycexists}.

\begin{proposition}\label{Cyctwostar}
Assume that $\Disc \kdprimeperp$ is cyclic. 
If there is a $[w]\in\labdrdual$ of order $r$ such that
$\kdprimeperp\cong T_w$ (so in particular, $d'=dr^2$), 
then $d$ satisfies \twostar.
\end{proposition}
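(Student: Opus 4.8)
The plan is to translate the assumed lattice isomorphism into an equality of finite quadratic forms and then read off the required divisibility conditions on $d$. Since $T_w$ and $\kdprimeperp$ both have signature $(2,19)$ and discriminant group of length at most three, an isomorphism $T_w\cong\kdprimeperp$ is the same as an isomorphism $(\Disc T_w,q_{T_w})\cong(\Disc\kdprimeperp,q_{\kdprimeperp})$ (by \cite[Cor.~1.13.3]{Nikulin}); in particular, comparing orders, $d'=dr^2$, and both discriminant groups are cyclic of order $d'$. By Lemma~\ref{orbits} we may assume $w=w_{n,k}$; since we are in the cyclic case, Proposition~\ref{cyclic}(i) gives $\gcd(r,2nd-k^2)=1$, hence $\gcd(r,d,k)=1$, so by Lemma~\ref{choosegcd} we may normalize $k$ so that $\gcd(r^2d,2nd-k^2)=1$. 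Then Corollary~\ref{generatorsDisc}(i) gives a generator $t$ of $\Disc T_w$ with $q_{T_w}(t)=\tfrac1{r^2d}(2nd-k^2)\bmod 2\Z$; note also that $2nd-k^2$, being coprime to the even number $r^2d$, is odd, so $k$ is odd and coprime to $2d$.

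First I would pin down the congruence class of $d$. Cyclicity of $\Disc\kdprimeperp$ together with Proposition~\ref{HassettQuadForm} forces $9\nmid d'$, so $v_3(d')\le 1$; as $d'=dr^2$ this gives $3\nmid r$ and $v_3(d)\le 1$. Since $d$ is even and $\mC_{d'}\ne\emptyset$, we have $d'\equiv 0,2\bmod 6$, and combining with $3\nmid r$ we conclude that $d\equiv 2\bmod 6$ if $3\nmid d$, and $d\equiv 0\bmod 6$ (with $v_3(d)=1$) if $3\mid d$. These are exactly the two cases of Proposition~\ref{HassettQuadForm} for $d'$, and also the two cases of Lemma~\ref{twostarquadrec} for $d$.

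Next I would match the cyclic quadratic forms. Recall that nondegenerate quadratic forms on $\Z/N$ with generating values $a/N$ and $b/N\bmod 2\Z$ are isomorphic exactly when $a\equiv c^2 b\bmod 2N$ for some $c$ coprime to $2N$. In the case $d'\equiv 2\bmod 6$, Proposition~\ref{HassettQuadForm} gives $b=\tfrac{1-2d'}3$, so the isomorphism yields $2nd-k^2\equiv\tfrac{1-2d'}3 c^2\bmod 2d'$; multiplying by the unit $3$ (here $3\nmid 2d'$) and reducing modulo $2d\mid 2d'$, using $2nd\equiv 0$, leaves $-3k^2\equiv c^2\bmod 2d$, i.e.\ $-3$ is a square modulo $2d$. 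By Lemma~\ref{twostarquadrec} and $d\equiv 2\bmod 6$, this says $d$ satisfies \twostar. In the case $d'\equiv 0\bmod 6$, write $d=6t$ (so $3\nmid t$, $s:=d'/3=2tr^2$). Since the decomposition $\Z/3\times\Z/\tfrac{d'}3$ of $\Disc\kdprimeperp$ is orthogonal and $\gcd(3,\tfrac{d'}3)=1$, a cyclic generator is $(1,1)$ with $q_{\kdprimeperp}(1,1)=\tfrac3{d'}-\tfrac23=\tfrac{3-2s}{d'}\bmod 2\Z$, and the $3$-part carries the form $-\tfrac23$. Comparing $3$-parts forces, using $3\mid d$ so $2nd-k^2\equiv-k^2\bmod 3$ and $s\equiv 2t\bmod 3$, that $t\equiv 1\bmod 3$, equivalently $4t$ is a square modulo $3$. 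Comparing the whole forms gives $2nd-k^2\equiv(3-2s)c^2\bmod 2d'$, and reducing modulo $4t\mid 2d'$, where now $2nd\equiv 0$ and $2s\equiv 0\bmod 4t$, leaves $-k^2\equiv 3c^2\bmod 4t$, so $-3\equiv(3c/k)^2\bmod 4t$ is a square modulo $4t$. Again Lemma~\ref{twostarquadrec} shows $d$ satisfies \twostar.

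The main obstacle I anticipate is the bookkeeping in this last step: one has to invoke the classification of finite quadratic forms on cyclic groups carefully at the prime $2$, where the relevant modulus is $2N$ rather than $N$ and where the square of an odd unit is constrained modulo $8$, and one has to correctly identify a cyclic generator of Hassett's group $\Z/3\times\Z/\tfrac{d'}3$ together with the value of $q_{\kdprimeperp}$ on it before localizing at $2$ and at $3$. Everything else — the passage to discriminant forms, the determination of the case, and the final appeal to Lemma~\ref{twostarquadrec} — is routine.
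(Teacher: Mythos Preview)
Your proposal is correct and follows essentially the same route as the paper: both arguments reduce to the congruence coming from equality of the two cyclic discriminant forms (via Corollary~\ref{generatorsDisc} and Proposition~\ref{HassettQuadForm}), then reduce that congruence modulo $2d$ (respectively modulo $4t$ and modulo $3$ when $3\mid d$) and invoke Lemma~\ref{twostarquadrec}. The only cosmetic difference is that the paper starts directly from the two cases $3\nmid d$ and $3\mid d$ and writes the comparison as $x^2(k^2-2nd)/d'\equiv q_{\kdprimeperp}(u)\bmod 2$, whereas you first deduce the case split from the cyclicity constraint $9\nmid d'$ and, in the case $3\mid d$, rebuild a cyclic generator $(1,1)$ of $\Disc\kdprimeperp$ before localizing; the resulting congruences and reductions are the same.
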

\begin{proof}
First assume that $3$ does not divide $d$.
By Proposition~\ref{HassettQuadForm} and Corollary~\ref{generatorsDisc},
we have $\kdprimeperp\cong T_{w_{n,k}}$ if and only if there is an $x$ such that
\[\tfrac{x^2}{r^2d}(k^2-2nd)\equiv\tfrac{2dr^2-1}{3dr^2}\mod 2.\]
Multiplying by $3dr^2$ gives
\[3x^2(k^2-2nd)\equiv 2dr^2-1\mod 6dr^2\]
which is equivalent to
\begin{equation}\label{CycNotDivQuads}
3x^2(k^2-2nd)\equiv -1\mod 2dr^2.
\end{equation}
It follows that $-3$ is a square modulo $2d$, so by Lemma \ref{twostarquadrec},
$d$ satisfies $(**)$.

\medskip
Next we assume $3|d$.
By Proposition~\ref{HassettQuadForm} and Corollary~\ref{generatorsDisc},
we have $\kdprimeperp\cong T_{w_{n,k}}$ if and only if there is an $x$ such that
\[\tfrac{x^2}{r^2d}(k^2-2nd)\equiv \tfrac{2}{3}-\tfrac{3}{dr^2}\mod 2.\]
Writing $d=6t$ and multiplying by $dr^2$ gives
\begin{equation}\label{CycDivQuads}
x^2(k^2-12nt)\equiv 4tr^2-3 \mod 12tr^2.
\end{equation}
In particular,  $-3$ is a square modulo $4t$, and we have $4tr^2\equiv x^2k^2\mod 3$.
Since $3$ does not divide $r$, this implies that $4t$ is a square modulo 3.
It follows from Lemma~\ref{twostarquadrec} that $d$ satisfies $(**)$.
\end{proof}

Write $r = 2^sqr_0$ where $q$ consists of all prime factors of $r$ which are 1 modulo 3,
and $r_0$ consists of all odd prime factors of $r$ which are 2 modulo 3.
In particular, $dq^2$ still satisfies \twostar, and we have $\gcd(r_0,dq^2)=1$.
\begin{proposition}\label{Cycexists}
There exists an $n$ such that for $w=w_{nq^2,r_0}\in\tfrac1r\labddual$, we have
$\kdprimeperp\cong T_w$.
\end{proposition}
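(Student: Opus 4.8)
The plan is to reduce the isomorphism $\kdprimeperp\cong T_{w_{nq^2,r_0}}$ to the solvability of a single congruence in an auxiliary variable $x$ --- precisely the one that already appears in the proof of Proposition~\ref{Cyctwostar} --- and then to produce an $n$ coprime to $r_0$ for which that congruence has a solution, using that $dq^2$ satisfies \twostar.

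First I would check that $\Disc T_{w_{nq^2,r_0}}$ is cyclic. By Proposition~\ref{cyclic}(i) this holds if and only if $\gcd(r,2nq^2d-r_0^2)=1$, and since $r=2^sqr_0$ with $r_0$ odd and $\gcd(r_0,dq^2)=1$, one checks this prime by prime: modulo $2$ and modulo any prime dividing $q$ the quantity $2nq^2d-r_0^2$ reduces to $-r_0^2$, a unit, while modulo a prime $p\mid r_0$ it reduces to $2nq^2d$; so the condition is equivalent to $\gcd(n,r_0)=1$. Granting $\gcd(n,r_0)=1$, the computation in the proof of Proposition~\ref{Cyctwostar} (now with $k=r_0$ and $n$ replaced by $nq^2$, both discriminant groups being cyclic) shows that $\kdprimeperp\cong T_{w_{nq^2,r_0}}$ if and only if there is an integer $x$ with $3x^2(r_0^2-2nq^2d)\equiv-1\pmod{2dr^2}$ when $3\nmid d$, respectively with the congruence~\eqref{CycDivQuads}, under the same substitutions, when $3\mid d$. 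So it remains to exhibit $n$ coprime to $r_0$ together with such an $x$.

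Assume first $3\nmid d$. I would solve the congruence by the Chinese Remainder Theorem along the coprime factorisation $2dr^2=2^{2s+1}dq^2\cdot r_0^2$ (here $r_0$ is odd and coprime to $dq^2$). Modulo $r_0^2$ the leading term drops out and the congruence becomes $6x^2nq^2d\equiv1\pmod{r_0^2}$; since $6q^2d$ is a unit modulo $r_0^2$, for any unit $x$ this has a unique solution $n$, which is itself a unit modulo $r_0^2$ --- in particular $\gcd(n,r_0)=1$, which is exactly the cyclicity condition. Modulo $2dq^2$ the term $2nq^2d$ vanishes, so the congruence reads $3x^2r_0^2\equiv-1\pmod{2dq^2}$, equivalently $(3xr_0)^2\equiv-3\pmod{2dq^2}$; as $dq^2$ satisfies \twostar{} and $dq^2\equiv2\bmod6$, Lemma~\ref{twostarquadrec} says $-3$ is a square modulo $2dq^2$, and since $3r_0$ is a unit there we can solve for $x$. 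Finally I would lift this from modulus $2dq^2$ to modulus $2^{2s+1}dq^2=2^{2s}\cdot(2dq^2)$ using the remaining freedom in $n$: writing $2nq^2d=(2dq^2)\cdot n$ and $3x^2r_0^2=-1+(2dq^2)c$ for a fixed integer $c$, the congruence modulo $2^{2s+1}dq^2$ amounts to $3x^2n\equiv c\pmod{2^{2s}}$, which is solvable for $n$ because $x$ is odd; combining this residue of $n$ with the one modulo $r_0^2$ by CRT completes the case $3\nmid d$.

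The case $3\mid d$ runs in exactly the same way, the extra ingredient being the second half of Lemma~\ref{twostarquadrec}: writing $dq^2=6T$ (note $3\nmid T$), the fact that $dq^2$ satisfies \twostar{} gives both that $-3$ is a square modulo $4T$ and that $4T$ is a square modulo $3$, and these are precisely what one needs in order to solve, after splitting off the factor $3$, the analogue of $3x^2r_0^2\equiv-1$ modulo $2dq^2$; the $r_0^2$-part and the $2$-adic lifting are identical. The step I expect to be the main obstacle --- or at least the one demanding the most bookkeeping --- is this last splitting: one must organise the three kinds of prime divisors of $r$, namely the $2$-power, the part $q$ with primes $\equiv1\bmod3$, and the part $r_0$ with primes $\equiv2\bmod3$, so that the $r_0$-part is absorbed into the choice of $n$ (this is why we take $k=r_0$ and the first coordinate $nq^2$), the $q^2d$-part is governed by condition \twostarsp transferred from $d$ to $dq^2$, and the residual $2$-power is handled by a Hensel-type lift; tracking units and the stray factor $3$ in the $3\mid d$ case is fiddly, but no tool beyond elementary number theory and quadratic reciprocity (via Lemma~\ref{twostarquadrec}) is needed.
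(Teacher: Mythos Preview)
Your proposal is correct and follows essentially the same strategy as the paper: reduce to the congruence from Proposition~\ref{Cyctwostar}, solve it modulo $2dq^2$ using that $dq^2$ satisfies \twostar{} via Lemma~\ref{twostarquadrec}, and absorb the $r_0^2$-part into the choice of $n$. The only organisational difference is in the $2$-adic step: the paper lifts $x$ by Hensel's lemma (keeping $n$ free until the very end), whereas you fix $x$ after the $2dq^2$-step and use the remaining freedom in $n$ modulo $2^{2s}$ to handle the extra factor; both work equally well. One small point to patch: the $x$ you obtain modulo $2dq^2$ need not be a unit modulo $r_0$, which you require in the $r_0^2$-step; the paper addresses exactly this by writing $ar_0+b\cdot 2d(2^sq)^2=1$ and replacing $x$ by $1+ar_0(x-1)$, and you should do likewise before solving for $n$ modulo $r_0^2$.
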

\begin{proof}
We first assume $3\centernot| d$. By \eqref{CycNotDivQuads} we have to show
that for some $x$ and some $n$,
\begin{equation}\label{CycNotDiv}
f_n(x):=3x^2(r_0^2-2ndq^2)+1\equiv 0\mod m
\end{equation}
where $m=2dr^2$.

 Since $dq^2$ satisfies $(**)$, the number $-3$ is a square modulo $2dq^2$.
 As $3r_0$ is invertible in $\Z/2dq^2\Z$, 
 we get $-3\equiv (3r_0x)^2\mod 2dq^2$ for some $x\in\Z$.
 This gives ${3x^2r_0^2+1\equiv 0\mod 2dq^2}$, which shows that \eqref{CycNotDiv}
 has a solution modulo $m=2dq^2$, for any $n$.
 In particular, it has solutions modulo $dq^2/2$ and modulo 4.

It follows that $(f_n/2)(x)\equiv 0$ has a solution modulo 2.
Also, $(f_n/2)'(x)=3x(r_0^2-2ndq^2)$ is always odd.
By Hensel's lemma, $(f_n/2)(x)=0$ has a solution modulo $2^l$ for any $l\geq 1$.
It follows that \eqref{CycNotDiv} has a solution modulo $2^l$ for any $l\geq 2$.

By the Chinese remainder theorem, there exists a solution $x$ for \eqref{CycNotDiv}
modulo $m=2d(2^sq)^2$.
We can assume $\gcd(x,r_0)=1$: otherwise, write $ar_0+b\cdot 2d(2^sq)^2=1$ and replace $x$ by
 ${x+b\cdot 2d(2^sq)^2(1-x) = 1+ar_0(x-1)}$.

 Now we have $\gcd(r_0^2,6x^2dq^2)=1$, so there exist $a$ and $b$ such that
 $ar_0^2+b\cdot 6x^2dq^2=1$.
 In particular, $r_0^2$ divides $3x^2\cdot -2bdq^2+1$. We see that for $n=b$,
 there is a solution to \eqref{CycNotDiv} modulo $m=r_0^2$.
 By the Chinese remainder theorem, there exists a solution modulo $2dr^2$.

 \medskip
 Next, assume $3|d$. Write $d=6t$.
 By \eqref{CycDivQuads} we have to show that for some $x$ and some $n$,
\begin{equation}\label{CycDiv}
 g_n(x):= x^2(r_0^2-12ntq^2)-4tr^2+3 \equiv 0\mod m
 \end{equation}
 where $m=12tr^2$.

 Since $dq^2$ satisfies $(**)$, first, $4tq^2$ is a square modulo 3,
 so also $4tr^2=4t(2^sqr_0)^2$ is a square modulo 3.
 Second, $-3$ is a square modulo $4tq^2$.
 Since $3$ does not divide $4tq^2$, it follows that $4tr^2-3$ is a square modulo $12tq^2$.

 Now $r_0$ is invertible in $\Z/12tq^2\Z$, which implies that
 $4tr^2-3\equiv (xr_0)^2\mod 12tq^2$ for some $x$.
 So $x^2r_0^2-4tr^2+3$ is divisible by $12tq^2$, which shows that
 \eqref{CycDiv} has a solution modulo $m=12tq^2$, for any $n$.
 In particular, there exist solutions modulo $3tq^2$
 and modulo $4$.

Like before, it follows from Hensel's lemma
that \eqref{CycDiv} has a solution modulo $2^l$ for any $l\geq 2$.

By the Chinese remainder theorem, there exists a solution $x$ for \eqref{CycDiv}
modulo $12t(2^sq)^2$.
Like before, if $\gcd(x,r_0)\neq 1$, take $a$ and $b$ such that $ar_0+b\cdot 12t(2^sq)^2 = 1$
and replace $x$ by $x+b\cdot 12t(2^sq)^2\cdot(1-x) = 1+ar_0(x-1)$.

Now we have $\gcd(r_0^2,4tx^2q^2)=1$, so we can write
$3ar_0^2 + bx^2\cdot 12tx^2q^2=3$ for some $a$ and $b$.
So for $n=b$, we find that $r_0^2$ divides $x^2\cdot-12ntq^2+3$,
hence \eqref{CycDiv} has a solution modulo $m=r_0^2$.
By the Chinese remainder theorem, it has a solution modulo $2dr^2$.
\end{proof}

\subsubsection{Proof of Theorem~\ref{LatPrecise}, non-cyclic case}
We now assume $\Disc(\kdprimeperp)\cong \Z/3\Z\times\Z/\tfrac{d'}3\Z$,
and we again show that $d'=dr^2$ with $d$ satisfying \twostarprimesp
if and only if there exists a $[w]\in\labdrdual$ of order $r$
such that $\kdprimeperp\cong T_w$.
The proof consists of Propositions~\ref{NotCyctwostar}, \ref{NotCycexists1}
and \ref{NotCycexists2}.

\begin{proposition}\label{NotCyctwostar}
Assume that $\Disc \kdprimeperp\cong \Z/(d'/3)\Z\times\Z/3\Z$.
If there is a $[w]\in\labdrdual$ of order $r$ such that
$\kdprimeperp\cong T_w$ (so in particular, $d'=dr^2$), 
then $d$ satisfies \twostar.
\end{proposition}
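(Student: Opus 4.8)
The plan is to run the discriminant‑form comparison of Proposition~\ref{Cyctwostar} in the non‑cyclic setting. By Lemma~\ref{orbits} I may assume $w=w_{n,k}$, so that $\Disc T_w\cong\Disc T_0$ is computed from the matrix $M$ above and Corollary~\ref{generatorsDisc} supplies generators and quadratic values for it. Since $\Disc\kdprimeperp$ is non‑cyclic, Proposition~\ref{HassettQuadForm} forces $d'\equiv 0\bmod 6$ and $9\mid d'$, and $\Disc\kdprimeperp\cong\Z/3\Z\oplus\Z/(d'/3)\Z$ carries quadratic values $-2/3$ and $3/d'$ on an orthogonal pair of generators of orders $3$ and $d'/3$. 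As this group is isometric to $\Disc T_w$, Proposition~\ref{cyclic}(ii) applies: $\gcd(r,2nd-k^2)=3$, so $3\mid r$; and when $3\mid d$, writing $d=6t$, the hypothesis $9\nmid nd$ gives $3\nmid n$ and $3\nmid t$. I then split into the cases $3\nmid d$ (so $d\equiv 2\bmod 6$ and $q_{T_w}$ is given by Corollary~\ref{generatorsDisc}(ii)) and $3\mid d$ (so $d\equiv 0\bmod 6$, $d=6t$, and Corollary~\ref{generatorsDisc}(iii) applies).

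The decisive point is that an isometry $\Disc\kdprimeperp\to\Disc T_w$ is the orthogonal sum of its $p$‑primary components. For each prime $p\neq 3$ the $p$‑component is cyclic of order $\ord_p(d')$, and the computation is the one from the cyclic case: writing $m$ for the prime‑to‑$3$ part of $d'$, transporting a generator of the prime‑to‑$3$ component through the isometry and comparing Hassett's value $3/d'$ with Corollary~\ref{generatorsDisc}'s value $\tfrac{1}{r^2d}(2nd-k^2)$ produces an integer $x$ with $x^2(2nd-k^2)\equiv 3\pmod{2m}$; reducing modulo $2d$ when $3\nmid d$, respectively modulo $4t$ when $3\mid d$ (using $3\nmid t$ so that $4t$ is the prime‑to‑$3$ part of $2d$) and absorbing $2nd\equiv 0$, one finds that $-3$ is a square modulo $2d$, respectively modulo $4t$. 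When $3\mid d$ one must in addition match the $3$‑primary parts. Here $\ord_3(d')=1+2\ord_3(r)\geq 3$, so these parts have the form $\Z/3^{a}\Z\oplus\Z/3\Z$ with $a\geq 2$; since the Jordan decomposition of a finite quadratic form over $\Z_3$ is rigid when the exponents are distinct, the unit invariant of the length‑one Jordan block must agree on both sides. On the $\kdprimeperp$‑side this invariant is $-2\equiv 1\pmod 3$; on the $T_w$‑side it is the unit part of $-d/9\equiv -2t/3$, namely $t\bmod 3$ (using $\ord_3(2nd-k^2)=1$, a consequence of $9\nmid nd$). Hence $t$, and therefore $4t$, is a square modulo $3$. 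In both cases Lemma~\ref{twostarquadrec} now gives that $d$ satisfies \twostar.

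The step I expect to be the main obstacle is the $3$‑adic local comparison in the case $3\mid d$: unlike in the cyclic case one cannot simply ``match the generator'', so one has to set up the Jordan decomposition at $p=3$, justify its rigidity via $\ord_3(d')\geq 3$, and extract the unit invariant of the relevant length‑one block correctly from the somewhat implicit generators of Corollary~\ref{generatorsDisc}(iii) and Proposition~\ref{HassettQuadForm} --- concretely, to show that the order‑$3$ block with value $-2/3$ is isometric to the one with value $-d/9$ exactly when $4t$ is a square modulo $3$. A secondary point needing care is the bookkeeping in the prime‑to‑$3$ comparison, so that the congruence emerges modulo $2d$ (resp.\ $4t$) and not merely modulo a proper divisor; this is precisely where the refined facts $\ord_3(2nd-k^2)=1$ and $3\nmid t$ --- both consequences of the hypothesis $9\nmid nd$ in Proposition~\ref{cyclic}(ii) --- are used. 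Once both congruences are in hand, the conclusion is immediate from Lemma~\ref{twostarquadrec}.
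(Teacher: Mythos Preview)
Your approach is correct but considerably more elaborate than the paper's. The paper avoids both the case split on $3\mid d$ and any $3$-adic Jordan analysis: it simply matches quadratic values on a generator of the $\Z/(d'/3)\Z$ factor to obtain $x^2(k^2-2nd)\equiv -3\pmod{2dr^2}$, and reducing modulo $2d$ (using $2nd\equiv 0$) gives $(xk)^2\equiv -3\pmod{2d}$, which for even $d$ is already equivalent to \twostar. In particular, your concern about separately verifying that $4t$ is a square modulo $3$ is misplaced: once $-3$ is a square modulo $4t$ and $3\nmid t$ (the latter you yourself extract from $9\nmid nd$), every odd prime dividing $t$ is $\equiv 1\pmod 3$, hence $t\equiv 1\pmod 3$ automatically --- so the $3$-adic Jordan-block matching you flag as the main obstacle is unnecessary. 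What your $p$-primary decomposition does buy is care on a point the paper leaves implicit, namely that an isometry $\Disc\kdprimeperp\to\Disc T_w$ need not carry the chosen generator of $\Z/(d'/3)\Z$ to a pure multiple of $[w]$; but one can check directly that the two generators in Corollary~\ref{generatorsDisc} are orthogonal and that the extra contribution $y^2\,q(u)$ (resp.\ $y^2\,q(u')$) is divisible by $2d$, so the paper's conclusion modulo $2d$ survives regardless of how the isometry mixes the two factors.
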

\begin{proof}
Consider the factor $\Z/(d'/3)\Z=\Z/(dr^2/3)\Z$.
By Proposition~\ref{HassettQuadForm} and Corollary \ref{generatorsDisc}, 
there exists an $x$ such that
$x^2\tfrac{2nd-k^2}{r^2d}$ is congruent to $\tfrac{3}{dr^2}$ modulo 2.
Multiplying both expressions with $-dr^2$ gives
\begin{equation}\label{NotCycQuads}
x^2(k^2-2nd)\equiv -3\mod 2dr^2.
\end{equation}
We see that $-3$ is a square modulo $2d$, which implies that $d$ satisfies $(**)$.
\end{proof}

Write $r = 2^sqr_0$, where $q$ consists of all prime factors of $r$ which are congruent to 1 modulo 3,
and $r_0$ consists of all other odd prime factors of $r$.
In particular, $dq^2$ still satisfies $(**)$ and $\gcd(r_0,dq^2)=1$.
Note that $3$ divides $r_0$.
\begin{proposition}\label{NotCycexists1}
Suppose $3$ does not divide $d$.
There exists an integer $n$ such that for ${w=w_{3nq^2,r_0}\in\tfrac1r\labddual}$, we have
$\kdprimeperp\cong T_w$.
\end{proposition}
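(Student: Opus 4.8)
The plan is to mirror the proof of Proposition~\ref{Cycexists}, now invoking Corollary~\ref{generatorsDisc}(ii) and the $d'\equiv 0\mod 6$ case of Proposition~\ref{HassettQuadForm}. Since $3\nmid d$ and $\Disc\kdprimeperp$ is non-cyclic we have $9\mid d'=dr^2$, hence $3\mid r$ and so $3\mid r_0$ (recall $r=2^sqr_0$); also $\gcd(r_0,dq^2)=1$, the number $dq^2$ still satisfies \twostar, and since $d$ is even, not divisible by $4$, divisible by no odd prime $\equiv 2\mod 3$, and (here) not by $3$, one has $d\equiv dq^2\equiv 2\mod 3$. For $w=w_{3nq^2,r_0}$ with $n$ to be chosen coprime to $r_0$ (in particular $3\nmid n$), a short computation gives $\gcd\bigl(r,\,6ndq^2-r_0^2\bigr)=3$, so by Proposition~\ref{cyclic}(ii) we are in the situation of Corollary~\ref{generatorsDisc}(ii), case $3\nmid d$: the group $\Disc T_w$ splits orthogonally as $\langle[w]\rangle\oplus\langle u'\rangle\cong \Z/(d'/3)\Z\oplus\Z/3\Z$ with the values of $q_{T_w}$ recorded there. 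Comparing with Proposition~\ref{HassettQuadForm} and using that each of $T_w$, $\kdprimeperp$ is determined by its discriminant form (Section~\ref{Tw}), the isometry $\kdprimeperp\cong T_w$ is equivalent to finding an integer $n$ for which both
\[x^2\bigl(6ndq^2-r_0^2\bigr)\equiv 3 \mod 2dr^2\quad\text{for some }x\in\Z\]
(this matches the $\Z/(d'/3)\Z$ summands and is just \eqref{NotCycQuads} specialised) and the congruence matching the $\Z/3\Z$ summands, namely $-\tfrac{d}{9}\bigl(6ndq^2-r_0^2\bigr)\equiv-\tfrac23\mod 2\Z$, hold; the latter, once $3\nmid n$, amounts to $n\equiv 1\mod 3$.

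To solve the first congruence I would argue exactly as in Proposition~\ref{Cycexists}. Because $dq^2$ satisfies \twostar, $-3$ is a square modulo $2dq^2$ by Lemma~\ref{twostarquadrec}; as $r_0$ is invertible mod $2dq^2$ this yields an $x$ with $x^2r_0^2\equiv -3\mod 2dq^2$, and since $6ndq^2\equiv 0\mod 2dq^2$ this $x$ solves the first congruence modulo $2dq^2$ for every $n$. Hensel's lemma at the prime $2$ (applied, as in Proposition~\ref{Cycexists}, after dividing out one factor of $2$) lifts this to a solution modulo the $2$-primary and prime-to-$r_0$ part of $2dr^2$; one then arranges $\gcd(x,r_0)=1$, so in particular $3\nmid x$. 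Finally, using $\gcd(r_0^2,6dq^2x^2)=1$ one picks $n$ so that $r_0^2\mid 3-x^2(6ndq^2-r_0^2)$, and the Chinese remainder theorem combines the pieces into one solution modulo $2dr^2$.

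The one point beyond the cyclic case is that the first congruence and the $\Z/3\Z$ condition must be met simultaneously, and the shape $w=w_{3nq^2,r_0}$ is engineered so that they are compatible: reducing $6ndq^2x^2\equiv 3\mod r_0^2$ modulo $9$ (legitimate since $9\mid r_0^2$) gives $2ndq^2x^2\equiv 1\mod 3$, and $d\equiv 2\mod 3$ together with $3\nmid x$ forces $n\equiv 1\mod 3$ — precisely the condition coming from the $\Z/3\Z$ summand. So the main obstacle is just the bookkeeping already present in Proposition~\ref{Cycexists} (a square root of $-3$ modulo $2dq^2$, Hensel at $2$, a suitable $n$ modulo $r_0^2$, and a final CRT), the only new feature being this automatic mod-$3$ compatibility.
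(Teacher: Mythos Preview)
Your approach is essentially identical to the paper's: both set up the congruence $x^2(r_0^2-6ndq^2)+3\equiv 0\bmod 2dr^2$, solve it modulo $2dq^2$ via Lemma~\ref{twostarquadrec}, lift with Hensel at $2$, arrange $\gcd(x,r_0)=1$, choose $n$ to handle the modulus $r_0^2$, and then verify that this $n$ automatically satisfies the $\Z/3\Z$ condition $n\equiv 1\bmod 3$ (the paper phrases this last step as showing $b\equiv 1\bmod 3$ for the Bézout coefficient $b=n$, which is the same computation you do).

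There is one slip worth flagging. You write ``using $\gcd(r_0^2,6dq^2x^2)=1$ one picks $n$\ldots'', but since $3\mid r_0$ (as you yourself observed) and $3\mid 6$, this gcd is $3$, not $1$. The paper handles this correctly by using $\gcd(r_0^2/3,\,2x^2dq^2)=1$ instead: writing $a\,r_0^2/3+b\cdot 2x^2dq^2=1$ gives $r_0^2\mid 3-6bx^2dq^2$, so $n=b$ works. Equivalently, the congruence $6ndq^2x^2\equiv 3\bmod r_0^2$ is solvable precisely because $\gcd(6dq^2x^2,r_0^2)=3$ divides the right-hand side $3$. Your conclusion is unaffected; only this one line of justification needs the correction.
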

\begin{proof}
By \eqref{NotCycQuads}, we need $n$ and $x$ such that
\begin{equation}\label{NotCycNotDiv}
x^2(r_0^2-6ndq^2)+3\equiv 0\mod m
\end{equation}
where $m=2dr^2$.

Since $dq^2$ satisfies \twostar, $-3$ is a square modulo $2dq^2$,
and as $r_0$ is divisible in $\Z/2dq^2\Z$, we have
$-3\equiv(r_0x)^2\mod 2dq^2$ for some $x$.
This shows that \eqref{NotCycNotDiv} has a solution modulo $2dq^2$ for any $n$.
In particular, there exist solutions modulo $dq^2/2$ and modulo $4$.

Using Hensel's lemma again, one shows that there exist solutions
modulo $2^{\ell}$ for any $\ell\geq 2$,
and by the Chinese remainder theorem, there exists a solution $x$
modulo $m=2d(2^sq)^2$.
We may assume $\gcd(x,r_0)=1$ by
writing $ar_0+b\cdot 2d(2^sq)^2=1$ and replacing $x$ by
 ${x+b\cdot 2d(2^sq)^2(1-x) = 1+ar_0(x-1)}$.
 
Now $\gcd(r_0^2/3,2x^2dq^2)=1$; take $a$ and $b$ such that
$ar_0^2/3+b\cdot 2x^2dq^2=1$.
Then $r_0^2$ divides $-6bx^2dq^2+3$,
so for $n=b$, there exists a solution to \eqref{NotCycNotDiv} modulo $r_0^2$.
By the Chinese remainder theorem,
there is a solution modulo $m=2dr^2$.

\medskip
We still need to check that for the generator $u$ of $\Z/3\Z\subset\Disc T_w$,
there exists $y\in\Z$ such that $y^2(u)^2=y^2\cdot\tfrac{d}{9}(r_0^2-6nq^2d)$
is congruent to $-2/3$ modulo 2.
Multiplying both expressions by $3/2$ gives
\[y^2\tfrac{d}{2}\tfrac{r_0^2-6nq^2d}{3}\equiv -1\mod 3.\]
Now note that $d/2\equiv 1\mod 3$, so taking $y$ such that $3$ does not divide $y$, we have
\[y^2\cdot\tfrac{d}{2}\cdot\tfrac{r_0^2-2nq^2d}{3}\equiv \tfrac{r_0^2-6nq^2d}{3}\mod 3.\]
The element on the right hand side is
\[3(r_0/3)^2-2bq^2d\equiv -b\mod 3\]
where $b$ was defined by the equation $ar_0^2/3+b\cdot 2x^2dq^2=1$.
Reducing this modulo 3, we indeed find $b\equiv 1\mod 3$.
\end{proof}

We are left with the case $3|d$.
\begin{proposition}\label{NotCycexists2}
Suppose $3$ divides $d$.
There exists an $n$ such that for $w=w_{nq^2,3r_0}\in\tfrac1r\labddual$, 
we have
$\kdprimeperp\cong T_w$.
\end{proposition}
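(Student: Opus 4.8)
The plan is to mirror the proof of Proposition~\ref{NotCycexists1}, the only new feature being a careful count of powers of~$3$. Since $\Disc\kdprimeperp$ is non-cyclic we have $9\mid d'$, and since $d$ satisfies \twostar\ (so $9\nmid d$) this forces $3\mid r$, hence $3\mid r_0$. Both $T_w$ and $\kdprimeperp$ have signature $(2,19)$, and by the invariant factor formula $\Disc T_{w_{nq^2,3r_0}}$ has at most three generators; by Proposition~\ref{cyclic}(ii) it is $\cong\Z/(dr^2/3)\Z\times\Z/3\Z$ once $\gcd(r,2nq^2d-9r_0^2)=3$ and $9\nmid nd$. So by \cite[Cor.~1.13.3]{Nikulin} it suffices to produce an $n$ satisfying those two divisibility conditions for which, in addition, the discriminant quadratic forms of $T_w$ and $\kdprimeperp$ agree, and by Corollary~\ref{generatorsDisc}(iii) and Proposition~\ref{HassettQuadForm} this comes down to matching the form on each of the two cyclic summands.

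The $\Z/3\Z$-summand needs no choice of $n$: writing $d=6t$, condition \twostar\ together with $3\mid d$ forces $t\equiv1\mod 3$, so that $q_{T_w}(0,1)=-d/9=-2t/3$ and $q_{\kdprimeperp}(0,1)=-2/3$ agree modulo $2\Z$. For the $\Z/(dr^2/3)\Z$-summand, exactly as in the derivation of \eqref{NotCycQuads} but now with $k=3r_0$, the forms are isometric precisely when there are integers $n,x$ with $\gcd(x,dr^2/3)=1$ and
\[x^2(9r_0^2-2nq^2d)+3\equiv0\mod 2dr^2.\]
As $3\mid d$, the left-hand side and the modulus are both divisible by~$3$; dividing through and substituting $d=6t$ turns this into
\[x^2(3r_0^2-4ntq^2)+1\equiv0\mod 4tr^2.\]

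From here I would run the argument of Propositions~\ref{Cycexists} and~\ref{NotCycexists1}. Since $dq^2$ satisfies \twostar, $-3$ is a square modulo $4tq^2$, and as $3r_0$ is a unit there we get $3r_0^2x^2\equiv-1\mod 4tq^2$ for some $x$ coprime to $4tq^2$; since $4ntq^2x^2$ is divisible by $4tq^2$, this solves the congruence modulo $4tq^2$ for every $n$. Hensel's lemma and the Chinese remainder theorem then lift $x$ to a solution modulo $4t(2^sq)^2$ (handling the $2$-adic part by also taking $n$ divisible by a suitable power of $2$), after which one replaces $x$ by $1+ar_0(x-1)$ to arrange $\gcd(x,r_0)=1$. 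Finally $4x^2tq^2$ is a unit modulo $r_0^2$, so one can fix $n$ modulo $r_0^2$ with $4nx^2tq^2\equiv1\mod r_0^2$; choosing $n$ compatibly with the earlier $2$-adic condition and applying the Chinese remainder theorem once more yields a solution modulo $4tr^2$. For this $n$, $3\nmid n$ and $\gcd(n,r_0)=1$ (both forced since $n$ is a unit modulo $r_0^2$, which $9$ divides), and since moreover $9r_0^2-2nq^2d$ is odd and coprime to $q$ we obtain $\gcd(r,2nq^2d-9r_0^2)=3$ and $9\nmid nd$, so Proposition~\ref{cyclic}(ii) applies. As the resulting $x$ is coprime to $4tr^2$, hence to $dr^2/3$, the generator matching is admissible, and $\kdprimeperp\cong T_w$.

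I expect the count of powers of~$3$ to be the only genuine obstacle. Because $3\mid d$ (but $9\nmid d$), $3\mid r_0$, and $k=3r_0$, the three quantities $k^2=9r_0^2$, the constant $3$, and the modulus $2dr^2$ all carry a common factor~$3$ that must be divided out before the congruence has the shape treated in Proposition~\ref{NotCycexists1}; conversely it is precisely the choice $k=3r_0$ (rather than $k=r_0$) that makes $2nq^2d-9r_0^2$ divisible by~$3$ but not by~$9$, so that $\Disc T_w$ becomes non-cyclic of exactly the type of $\Disc\kdprimeperp$ when $9\mid d'$. Everything else simply repeats the earlier arguments.
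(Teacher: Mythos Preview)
Your approach is essentially identical to the paper's: both reduce to the congruence $x^2(3r_0^2-4ntq^2)+1\equiv 0\bmod 4tr^2$, solve it modulo $4tq^2$ using \twostar, lift $2$-adically and via the Chinese remainder theorem, adjust $x$ to be coprime to $r_0$, then pick $n\bmod r_0^2$ to finish; finally both check the $\Z/3\Z$ factor. Your treatment of that factor (observing directly that $t\equiv 1\bmod 3$, so $-d/9\equiv -2/3\bmod 2\Z$) is slightly cleaner than the paper's, and your explicit verification that the hypotheses of Proposition~\ref{cyclic}(ii) hold for the constructed $n$ is more careful than the paper, which leaves this implicit.

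There is one genuine slip. Your parenthetical ``handling the $2$-adic part by also taking $n$ divisible by a suitable power of~$2$'' points in the wrong direction. Writing $c=3r_0^2-4ntq^2$, one has $c\equiv 3-4n\bmod 8$ (since $r_0,t,q$ are all odd), and the congruence $cx^2\equiv -1\bmod 2^{\ell}$ is solvable for $\ell\geq 3$ exactly when $-c^{-1}\equiv 1\bmod 8$, i.e.\ when $c\equiv 7\bmod 8$, i.e.\ when $n$ is \emph{odd}. Taking $n$ even forces $c\equiv 3\bmod 8$, and then already modulo~$8$ there is no solution; so for $s\geq 1$ your suggested choice fails. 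The fix is simply to impose $n$ odd; since $r_0$ is odd, this is compatible via CRT with your later determination of $n\bmod r_0^2$, and the rest of your argument goes through unchanged. (The paper's ``by Hensel's lemma once more'' is equally terse on exactly this point, but at least does not commit to the wrong parity.)
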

\begin{proof}
By \eqref{NotCycQuads}, we need $n$ and $x$ such that
\[x^2((3r_0)^2-2ndq^2)+3\equiv 0\mod 2dr^2.\]
Write $d=6t$, then this is equivalent to
\begin{equation}\label{NotCycDiv}
x^2(3r_0^2-4ntq^2)+1\equiv 0\mod m
\end{equation}
where $m=4tr^2$.

As $dq^2$ satisfies \twostar, Lemma \ref{twostarquadrec} tells us that
$-3$ is a square modulo $4tq^2$. 
Since $\gcd(3r_0,4tq^2)=1$, it follows that $-3\equiv(3r_0x)^2\mod 4tq^2$
for some $x$. So we have ${3x^2r_0^2+1\equiv 0\mod 4tq^2}$,
which shows that \eqref{NotCycDiv} has a solution modulo $m=4tq^2$.

By Hensel's lemma once more, it also has a solution modulo $2^{\ell}$
for all $\ell\geq 2$, and by the Chinese remainder theorem it then has a
solution $x$ modulo $4t(2^sq)^2$.
Like before, we may assume $\gcd(x,r_0)=1$ by 
writing $ar_0+b\cdot 2d(2^sq)^2=1$ and replacing $x$ by
 ${x+b\cdot 2d(2^sq)^2(1-x) = 1+ar_0(x-1)}$.
 
Now note that $\gcd(r_0^2,4tx^2q^2)=1$ and take $a$, $b$ such that
$ar_0^2+b\cdot 4tx^2q^2=1$. Then $r_0^2$ divides $-b\cdot 4tx^2q^2+1$,
showing that for $n=b$, \eqref{NotCycDiv} has a solution modulo $m=r_0^2$.
By the Chinese remainder theorem, there exists a solution modulo $4tr^2$.

\medskip
Finally, we need to check that for the generator $u'$ of $\Z/3\Z\subset\Disc T_w$,
there exists a $y$ such that $y^2(u')^2=-y^2d/9$
is congruent to $-2/3$ modulo 2.
Multiplying by $-3/2$, we get
\[y^2d/6\equiv 1\mod 3\]
which is true whenever $3$ does not divide $y$.
\end{proof}

\section{Rational maps to \texorpdfstring{$\mC_{d'}$}{Cd'}}

For untwisted K3 surfaces, an isomorphism $\labd\cong\kdperp$ can be used to construct
a rational map $\tM_d\dashrightarrow\mC_d$.
We will generalize these maps to the situation of twisted K3 surfaces.

Throughout this section, we will assume $d'$ satisfies \twostarprimesp and fix a decomposition $d'=dr^2$ with $d$ satisfying \twostar.
Moreover, we fix $[w]\in\labdrdual$ as in Theorem \ref{LatPrecise}
and choose an isomorphism $\kdprimeperp\cong T_w=\ker\, (w,-)$.

\subsection{Construction}
The cohomology lattice $\HH^4(X,\Z)(1)$ of a cubic fourfold $X$ is isomorphic to
\[E_8(-1)^{\oplus 2}\oplus U^{\oplus 2}\oplus \Z(-1)^{\oplus 3}.\]
The isomorphism can be chosen such that the square of the hyperplane class on $X$
is mapped to $h:=(1,1,1)\in \Z(-1)^{\oplus 3}$.
We denote the orthogonal complement to $h$ by $\Gamma$,
so $\Gamma$ is isomorphic to $\HH^4(X,\Z)_{\prim}(1)$. A primitive sublattice
\[K\subset E_8(-1)^{\oplus 2}\oplus U^{\oplus 2}\oplus \Z(-1)^{\oplus 3}\]
of rank two and discriminant $d'$ containing $h$ is unique up to the action of
the stable orthogonal group
$\stO(\Gamma)=\ker(\tO(\Gamma)\to\tO(\Disc \Gamma))$.
We fix one such $K$ for each discriminant $d'$ and denote it by $K_{d'}$.
Its orthogonal complement $\kdprimeperp$ is contained in $\Gamma$.

Note that the group $\stO(\kdprimeperp)$ can be viewed as a subgroup of $\stO(\Gamma)$:
any element $f$ in $\stO(\kdprimeperp)$ can be extended to 
an orthogonal transformation $\widetilde{f}$ of the unimodular lattice
${E_8(-1)^{\oplus 2}\oplus U^{\oplus 2}\oplus \Z(-1)^{\oplus 3}}$ 
such that $\widetilde{f}|_{K_{d'}}$ is the identity.
Then restrict to $\Gamma$ to get an element of $\stO(\Gamma)$.

\medskip
On the level of the period domain, the above gives us a commutative diagram
\[\xymatrix{ \mD(T_w)\ar[d]\ar[r]^{\cong} & \mD(\kdprimeperp)\ar[d] \ar@{^{(}->}[rr] &
& \mD(\Gamma)\ar[d]\\
 \mD(T_w)/\stO(T_w) \ar[r]^{\cong} & \mD(\kdprimeperp)/\stO(\kdprimeperp) \ar[r]
 &\overline{\mC}_{d'} \ar@{^{(}->}[r] & \mD(\Gamma)/\stO(\Gamma)
}\]
where $\overline{\mC}_{d'}$ is the image of $\mD(\kdprimeperp)$ under
$\mD(\Gamma)\to\mD(\Gamma)/\stO(\Gamma)$.
Embedding the moduli space $\mC$ of smooth cubic fourfolds into $\mD(\Gamma)/\stO(\Gamma)$
via the period map,
one shows that $\overline{\mC}_{d'}$ is the closure of $\mC_{d'}\subset\mC$ in
$\mD(\Gamma)/\stO(\Gamma)$.

\begin{lemma}
The group $\stO(T_w)$ is a subgroup of $\stab[w]\subset\stO(\labd)$.
\end{lemma}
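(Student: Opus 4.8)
The plan is to show that each $g\in\stO(T_w)$, extended $\Q$-linearly to $g_{\Q}\in\tO(T_w\otimes\Q)=\tO(\labd\otimes\Q)$, restricts to an element of $\stO(\labd)$, and that this element fixes $[w]\in\labdrdual$. Everything rests on the chain of finite-index inclusions
\[T_w\ \subset\ \labd\ \subset\ \labddual\ \subset\ T_w^{\dual}\]
inside the common rational space $T_w\otimes\Q=\labd\otimes\Q$ (here $T_w\subset\labd$ is the kernel of the surjection $(w,-)\colon\labd\to\Z/r\Z$, the middle inclusion is the natural embedding of the integral lattice $\labd$, and $\labddual\subset T_w^{\dual}$ is obtained by dualizing $T_w\subset\labd$), together with one observation: since $g_{\Q}$ preserves $T_w$ it also preserves $T_w^{\dual}$, and the induced map on $T_w^{\dual}/T_w=\Disc T_w$ is, by definition, the action of $g$ on $\Disc T_w$, which is trivial because $g$ is stable. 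Hence $g_{\Q}(v)-v\in T_w$ for every $v\in T_w^{\dual}$.

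Granting this, the first half of the lemma is formal. As $T_w\subset\labd$, the lattice $\labd$ (which sits inside $T_w^{\dual}$ by the chain) is a union of cosets of $T_w$, so $g_{\Q}(\labd)\subseteq\labd$; applying the same to $g^{-1}$ gives $g_{\Q}(\labd)=\labd$, so $g_{\Q}$ restricts to an isometry of $\labd$ and therefore also preserves $\labddual$. Moreover, for $v\in\labddual\subset T_w^{\dual}$ we have $g_{\Q}(v)-v\in T_w\subset\labd$, so $g_{\Q}|_{\labd}$ acts trivially on $\Disc\labd=\labddual/\labd$; thus $g_{\Q}|_{\labd}\in\stO(\labd)$. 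This is the inclusion $\stO(T_w)\hookrightarrow\stO(\labd)$ implicit in the statement, and it is injective because a rational isometry is determined by its restriction to the full-rank lattice $\labd$.

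For the second half, note that $(w,T_w)\subset\Z$ by the very definition of $T_w$, so $w\in T_w^{\dual}$ — this is one of the generators of $\Disc T_w$ listed in Section~\ref{Tw}. By the observation above, $g_{\Q}(w)-w\in T_w\subset\labddual$, which says exactly that $g_{\Q}|_{\labd}$ fixes the image of $w$ in $\labdrdual=(\tfrac1r\labddual)/\labddual$; that is, $g_{\Q}|_{\labd}\in\stab[w]$. Hence $\stO(T_w)\subseteq\stab[w]$. I do not expect a serious obstacle here: the only points that need attention are the inclusion chain (in particular $\labd\subseteq T_w^{\dual}$ and $w\in T_w^{\dual}$, both immediate from the definition of $T_w$) and the identification of the induced $g$-action on $\Disc T_w$ with the stability of $g$, which is simply the definition of $\stO(T_w)$.
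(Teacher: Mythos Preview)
Your proof is correct and follows essentially the same route as the paper: both arguments rest on the single observation that $g\in\stO(T_w)$ means $g_{\Q}(v)-v\in T_w$ for every $v\in T_w^{\dual}$, then apply this first to $v\in\labd\subset T_w^{\dual}$ (giving $g|_{\labd}\in\stO(\labd)$) and second to $v=w\in T_w^{\dual}$ (giving $g(w)-w\in T_w\subset\labddual$, hence $[w]$ is fixed). The paper phrases this via the dual map $g^{\dual}$ rather than the rational extension $g_{\Q}$, but under the standard identification $T_w^{\dual}\subset T_w\otimes\Q$ these are the same map.
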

\begin{proof}
Let $g\in\stO(T_w)$.
By assumption, $g^{\dual}$ sends any $x\in T_w^{\dual}$ to $x+y$ for some $y\in T_w\subset\labd$.
In particular, this holds for $x\in\labd\subset T_w^{\dual}$,
which shows that $g^{\dual}$ preserves $\labd$.
Moreover, $g^{\dual}$ induces the identity on $\Disc \labd$,
so $g^{\dual}|_{\labd}$ is an element of $\stO(\labd)$.

Now $w\in\tfrac1r\labddual$ lies in $T_w^{\dual}$,
so we also have $g^{\dual}(w)=w+y$ for some $y\in T_w\subset\labddual$.
This implies that when acting on $\labdrdual$, the map $g^{\dual}|_{\labd}$ stabilizes $[w]$.
\end{proof}

The period map $\mP_w\colon\mdmar\times\{[w]\}\to\mD(T_w)$ induces an embedding of
\[\tildmw:= (\mdmar\times\{[w]\})/\stO(T_w)\]
into $\mD(T_w)/\stO(T_w)$. This map is algebraic, which is shown
similarly as for the embedding $\mw\hookrightarrow\mD(T_w)/\stab[w]$.
The space $\tildmw$ parametrizes tuples $(S,L,\alpha,f)$
where $(S,L,\alpha)$ is in $\mw$ and $f$ is an isomorphism from $\Disc (\ker\alpha)$ to $\Disc T_w$.
The composition
\[\tildmw\to \mD(T_w)/\stO(T_w)\to \overline{\mC}_{d'}\]
induces a rational map $\tildmw\dashrightarrow \mC_{d'}$,
which is regular on an open subset that maps surjectively 
(by Corollary~\ref{geomresult}) to $\mC_{d'}$.
Hassett showed that 
$\mD(\kdprimeperp)/\stO(\kdprimeperp) \to \overline{\mC}_{d'}$ 
generically has degree one when $d'\equiv 2\mod 6$, and degree two when $d'\equiv 0\mod 6$.
Hence, $\tildmw\dashrightarrow \mC_{d'}$ is birational
in the first case and has degree two in the second case;
see also Section~\ref{pairs}.

\medskip
The map $\gamma\colon\tildmw\dashrightarrow \mC_{d'}$ is in general not unique:
it depends on the choice of an isomorphism $T_w\cong\kdprimeperp$.
To be precise, let $\iota\colon \tO(T_w)\to\aut(\mD(T_w))$ send an isometry of $T_w$
to the induced action on the period domain.
Then $\gamma$ is unique up to 
$\iota(\tO(T_w))/\iota(\stO(T_w))$.
We can compute this group as in \cite[Lemma~3.1]{HLOY}: there is a short exact sequence
\[0\to\stO(T_w)\to\tO(T_w)\to\tO(\Disc T_w)\to 0.\]
Using $\iota(\stO(T_w))\cong \stO(T_w)$ and $\iota(\tO(T_w))\cong \tO(T_w)/\pm\id$, we find that
\[\iota(\tO(T_w))/\stO(T_w)\cong\tO(\Disc T_w)/\pm\id.\]
\begin{corollary}\label{ChoiceMap}
The map $\tildmw\dashrightarrow \mC_{d'}$ is unique up to elements of
$\tO(\Disc T_w)/\pm\id$.
\end{corollary}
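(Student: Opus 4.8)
The plan is to promote the paragraph preceding the corollary into a proof, by first pinning down the precise source of ambiguity and then running the group-theoretic computation already sketched there. Once the decomposition $d'=dr^2$, the class $[w]\in\labdrdual$ (chosen as in Theorem~\ref{LatPrecise}), and the sublattice $K_{d'}\subset\Gamma$ are fixed, the only genuine choice entering the construction of $\gamma$ is an isometry $\psi\colon T_w\xrightarrow{\sim}\kdprimeperp$: this $\psi$ simultaneously produces the identification $\mD(T_w)/\stO(T_w)\xrightarrow{\sim}\mD(\kdprimeperp)/\stO(\kdprimeperp)$ and the identification $\stO(T_w)\cong\stO(\kdprimeperp)$, while the algebraic embedding $\tildmw\hookrightarrow\mD(T_w)/\stO(T_w)$ (Corollary~\ref{ConnComp}), the inclusion $\stO(\kdprimeperp)\hookrightarrow\stO(\Gamma)$, and the morphism $\mD(\kdprimeperp)/\stO(\kdprimeperp)\to\overline{\mC}_{d'}\supset\mC_{d'}$ involve no further choices ($K_{d'}$ being unique up to $\stO(\Gamma)$). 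So the first step is the careful verification that $\gamma$ factors through $\psi$ alone, i.e. that any two isometries give the same diagram after relabelling.

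Next I would compare two admissible choices $\psi_1,\psi_2\colon T_w\xrightarrow{\sim}\kdprimeperp$. Writing $g:=\psi_1^{-1}\circ\psi_2\in\tO(T_w)$, so that $\psi_2=\psi_1\circ g$, and pushing periods through the diagram, the morphism $\mD(T_w)/\stO(T_w)\to\overline{\mC}_{d'}$ attached to $\psi_2$ equals the one attached to $\psi_1$ precomposed with the automorphism $\overline{\iota(g)}$ of $\mD(T_w)/\stO(T_w)$ descended from $\iota(g)\in\aut(\mD(T_w))$ (here $g$ normalizes the normal subgroup $\stO(T_w)\trianglelefteq\tO(T_w)$, so $\iota(g)$ does descend). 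Restricting to $\tildmw$ gives $\gamma_{\psi_2}=\gamma_{\psi_1}\circ\overline{\iota(g)}$, and conversely every $g\in\tO(T_w)$ occurs, by taking $\psi_2:=\psi_1\circ g$. Moreover $\overline{\iota(g)}$ depends on $g$ only through the image of $g$ in $\iota(\tO(T_w))/\iota(\stO(T_w))$: replacing $g$ by $gs$ with $s\in\stO(T_w)$ changes nothing after passing to the $\stO(T_w)$-quotient, and $\iota(g)=\iota(-g)$ since $-\id$ fixes every period line. So the maps obtained form a single orbit under $\iota(\tO(T_w))/\iota(\stO(T_w))$.

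It remains to identify this group, which is the computation from the text. The kernel of $\iota\colon\tO(T_w)\to\aut(\mD(T_w))$ is $\{\pm\id\}$: an isometry fixing every point of the period domain acts as a scalar on $T_w\otimes\C$ (the period lines are Zariski dense in the $19$-dimensional quadric they span), hence is $\pm\id$. Also $-\id\notin\stO(T_w)$, since $\Disc T_w\cong\Disc\kdprimeperp$ has an element of order greater than $2$ by Proposition~\ref{HassettQuadForm}. Hence $\iota|_{\stO(T_w)}$ is injective and $\iota(\tO(T_w))\cong\tO(T_w)/\{\pm\id\}$, so feeding the short exact sequence $0\to\stO(T_w)\to\tO(T_w)\to\tO(\Disc T_w)\to 0$ through $\iota$ gives $\iota(\tO(T_w))/\iota(\stO(T_w))\cong\tO(\Disc T_w)/\{\pm\id\}$; the surjectivity in that sequence is also what guarantees that every element of $\tO(\Disc T_w)/\pm\id$ is realized by a choice of $\psi$. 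This is the asserted group.

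There is no deep step: the only non-formal inputs are $\ker\iota=\{\pm\id\}$ for a period domain of signature $(2,19)$ and the surjectivity $\tO(T_w)\twoheadrightarrow\tO(\Disc T_w)$, both standard. The part that takes genuine care — and the expected obstacle — is the first step: making sure that \emph{no} other ingredient of the construction contributes ambiguity. Concretely, one must invoke the uniqueness of $K_{d'}$ up to $\stO(\Gamma)$ and the canonicity of $\stO(\kdprimeperp)\hookrightarrow\stO(\Gamma)$ so that the target side $\mD(\kdprimeperp)/\stO(\kdprimeperp)\to\overline{\mC}_{d'}$ is rigid, and one must check that $\psi$ interacts compatibly with the $\stO$-quotients on both sides and with the algebraic structures of Corollary~\ref{ConnComp} and Section~\ref{Periodmaps}.
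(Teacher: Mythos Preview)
Your proposal is correct and follows the same approach as the paper: the paper's proof is precisely the paragraph preceding the corollary (which you explicitly set out to expand), identifying the ambiguity as $\iota(\tO(T_w))/\iota(\stO(T_w))$ and computing this via the short exact sequence $0\to\stO(T_w)\to\tO(T_w)\to\tO(\Disc T_w)\to 0$ together with $\iota(\stO(T_w))\cong\stO(T_w)$ and $\iota(\tO(T_w))\cong\tO(T_w)/\pm\id$. Your additions---verifying $\ker\iota=\{\pm\id\}$, checking $-\id\notin\stO(T_w)$ via Proposition~\ref{HassettQuadForm}, and arguing that no other ingredient of the construction contributes ambiguity---fill in details the paper leaves implicit or defers to \cite[Lemma~3.1]{HLOY}.
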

When $\Disc T_w\cong\Z/d'\Z$, this group is isomorphic to $(\Z/2\Z)^{\oplus \tau(d'/2)-1}$,
where $\tau(d'/2)$ is the number of prime factors of $d'/2$.

\medskip
We have seen that there is a difference to the untwisted situation: 
the rational map to $\mC_{d'}$
can only be defined after taking a finite covering
$\pi\colon\tildmw\to \mw$.
We give an upper bound for the degree of this covering. 
\begin{corollary}\label{index}
The degree of the quotient map $\pi\colon\tildmw\to\mw$ is at most
\[I=|\tO(\Disc T_w)/\pm\id|.\]
If $\Disc T_w$ is cyclic, then $I=2^{\tau(d'/2)-1}$.
\end{corollary}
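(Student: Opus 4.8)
The plan is to express the degree of $\pi$ as the index of a subgroup and then estimate that index. First I would note that $\stab[w]$ preserves the sublattice $T_w\subset\labd$: if $g\in\stab[w]$ and $g^{-1}(w)=w+v$ with $v\in\labddual$, then for $x\in T_w$ one has $(g(x),w)=(x,w)+(x,v)\in\Z$, so $g(x)\in T_w$. Restriction to $T_w$ thus yields an injective homomorphism $\stab[w]\hookrightarrow\tO(T_w)$ (an isometry of $T_w\otimes\Q=\labd\otimes\Q$ is determined by its restriction to $T_w$), and composing with the reduction $\tO(T_w)\to\tO(\Disc T_w)$ gives a homomorphism $\rho\colon\stab[w]\to\tO(\Disc T_w)$. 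By the lemma proved just before Corollary~\ref{ChoiceMap}, $\stO(T_w)$ is a subgroup of $\stab[w]$, and since $\stO(T_w)=\ker(\tO(T_w)\to\tO(\Disc T_w))$ we get $\ker\rho=\stab[w]\cap\stO(T_w)=\stO(T_w)$, hence $[\stab[w]:\stO(T_w)]=|\im\rho|$. As $\tildmw$ and $\mw$ are the quotients of $\mdmar\times\{[w]\}$ by $\stO(T_w)$ and by $\stab[w]$, every fibre of $\pi$ is a set of $\stO(T_w)$-orbits contained in a single $\stab[w]$-orbit, so $\deg\pi\le[\stab[w]:\stO(T_w)]=|\im\rho|$. (For $d>2$ this is an equality, the $\stab[w]$-action on $\mdmar$ being free outside a countable union of proper analytic subvarieties; for $d=2$ one divides by $2$ to account for $-\id\in\stab[w]$ acting trivially on $\mdmar$, which only sharpens the estimate.)

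The crux is to show that $-\id_{\Disc T_w}$ does not lie in $\im\rho$, which is what replaces the naive bound $|\tO(\Disc T_w)|$ by $I=|\tO(\Disc T_w)/\pm\id|$. For $d>2$ we have $-\id_{\labd}\notin\stO(\labd)$, since $-\id$ acts as $-\id\ne\id$ on $\Disc\labd\cong\Z/d\Z$. If $\rho(g)=-\id$ for some $g\in\stab[w]$, then $-g|_{T_w}$ acts trivially on $\Disc T_w$, i.e.\ $-g|_{T_w}\in\stO(T_w)\subset\stab[w]$, whence $-\id_{T_w}=(-g|_{T_w})\circ(g|_{T_w})^{-1}\in\stab[w]\subset\stO(\labd)$; but $-\id_{T_w}$ extends to $-\id_{\labd}\notin\stO(\labd)$, a contradiction. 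The same reasoning shows $-\id\ne\id$ on $\Disc T_w$ for $d>2$, so $\{\pm\id\}$ has order two and $\im\rho\cap\{\pm\id\}=\{\id\}$; therefore $\im\rho$ injects into $\tO(\Disc T_w)/\pm\id$ and $\deg\pi\le|\im\rho|\le I$. I expect this $-\id$-bookkeeping, together with the harmless exceptional case $d=2$, to be the only genuinely delicate part of the proof.

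Finally, suppose $\Disc T_w$ is cyclic, so $\Disc T_w\cong\Z/d'\Z$; because we have fixed an isometry $T_w\cong\kdprimeperp$, the induced quadratic form is the one from Proposition~\ref{HassettQuadForm}. I would compute $|\tO(\Disc T_w)|$ prime by prime, using $\tO(\Disc T_w)=\prod_{p\mid d'}\tO\big((\Disc T_w)_p\big)$. For an odd prime $p$, the $p$-primary part is cyclic with a nondegenerate form and its isometries are just $\pm\id$ (the only square roots of $1$ in $(\Z/p^k\Z)^{*}$), contributing a factor $2$. For $p=2$, Proposition~\ref{HassettQuadForm} shows that $(\Disc T_w)_2\cong\Z/2^a\Z$, where $2^a$ is the exact power of $2$ dividing $d'$, and that $q$ takes on a generator an odd multiple of $1/2^a$ modulo $2\Z$; its isometry group is $\{u\bmod 2^a:u^2\equiv1\bmod 2^{a+1}\}$, which a short Hensel-type computation shows has order $1$ if $a=1$ and order $2$ if $a\ge2$. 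Writing $d'=2^a m$ with $m$ odd, $d'$ has exactly $\tau(m)$ odd prime divisors, and $\tau(d'/2)=\tau(m)$ when $a=1$ while $\tau(d'/2)=\tau(m)+1$ when $a\ge2$; in either case $|\tO(\Disc T_w)|=2^{\tau(d'/2)}$. Since $d'>2$ forces $-\id\ne\id$ on $\Disc T_w$, this gives $I=\tfrac12|\tO(\Disc T_w)|=2^{\tau(d'/2)-1}$, as asserted; this also recovers the description of $\tO(\Disc T_w)/\pm\id$ stated just after Corollary~\ref{ChoiceMap}.
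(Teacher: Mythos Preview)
Your approach is essentially the same as the paper's, only more explicit. The paper's two-line proof observes that $\deg\pi$ equals the index of $\iota(\stO(T_w))$ in $\iota(\stab[w])$, where $\iota\colon\tO(T_w)\to\aut(\mD(T_w))$ is the action on the period domain; since $\iota(\stab[w])\subset\iota(\tO(T_w))$ and $[\iota(\tO(T_w)):\iota(\stO(T_w))]=I$ was computed just before Corollary~\ref{ChoiceMap}, the bound follows. The point is that $\ker\iota=\{\pm\id\}$, so passing to $\iota$ automatically handles the $\pm\id$ bookkeeping you carry out by hand with $\rho$. Your direct argument that $-\id_{\Disc T_w}\notin\im\rho$ is a valid unpacking of exactly this step, and you supply a computation of $|\tO(\Z/d'\Z,q)|=2^{\tau(d'/2)}$ which the paper only states.

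One small correction to your parenthetical: for $d=2$ the element $-\id_{\labd}$ lies in $\stO(\Lambda_2)$, but it lies in $\stab[w]$ only when $2[w]=0$, i.e.\ when $r\mid 2$. For $d=2$ and $r\geq 3$ your main argument actually still works: if $\rho(g)=-\id$ forces $-\id_{\labd}\in\stab[w]$, this is already a contradiction because $-[w]\neq[w]$. So the case split should be ``$-\id_{\labd}\in\stab[w]$'' versus ``$-\id_{\labd}\notin\stab[w]$'' rather than ``$d=2$'' versus ``$d>2$''; in the former case your trivial-action remark applies, in the latter your contradiction argument goes through. With that adjustment your proof is complete.
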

\begin{proof}
The degree of $\pi$ is the index of $\iota(\stO(T_w))\cong\stO(T_w)$ in $\iota(\stab [w])$.
This is at most the index $I$ of $\stO(T_w)$ in $\iota(\tO(T_w))$.
\end{proof}

\subsection{Example}
We consider the case $d=r=2$, so $d'=8$. 
The cubic fourfolds in $\mC_8$ are those containing a plane.
Given a generic such cubic $X$, there is
a geometric construction yielding
a twisted K3 surface $(S,\alpha)$ of degree 2 and order 2.
Kuznetsov showed that $(S,\alpha)$ is associated to $X$ in a derived category-theoretic sense \cite{Kuznetsov}; it follows from \cite{K3category}
that $(S,\alpha)$ is also Hodge-theoretically associated to $X$.

\medskip
By Lemma~\ref{orbits},
the moduli space $\tM_2^2$ has at most four connected components,
corresponding to the vectors $w_{n,k}=\tfrac12(e_1+nf_1+\tfrac{k}{2}\ell_2')$
with $n,k\in\{0,1\}$.
Now by Eichler's criterion,
$e_1$ is equivalent to $e_1+f_1+\ell_2'$ under $\stO(\Lambda_2)$,
and this is equivalent to $e_1+f_1$ modulo 
$2\Lambda_2^{\dual}$.
Thus, the components $\tM_{w_{0,0}}$ and $\tM_{w_{1,0}}$ are the same.

\medskip
The discriminant group of $K_8^{\perp}$ is cyclic, and one can choose a generator $u$ such
that $q_{K_8^{\perp}}(u)=-\tfrac{5}{8}\mod 2\Z$.
By Proposition~\ref{cyclic}, the discriminant group of $T_{w_{n,k}}$ is cyclic 
if and only if $k=1$.
By Corollary \ref{generatorsDisc}, $T_{w_{n,1}}$ is isomorphic to $K_8^{\perp}$ if and only if there exists an $x\in\Z$ such that $\tfrac{x^2(4n-1)}2\equiv -\tfrac{5}{8}\mod 2$.
For $n=0$, we have
\[\tfrac{x^2(4n-1)}2 = -\tfrac{x^2}{8}\]
which is never equivalent to $-\tfrac{5}{8}$ modulo 2.
For $n=1$, we have
\[\tfrac{x^2(4n-1)}2 = \tfrac{3x^2}{8}\]
which is equivalent to $-\tfrac{5}{8}$ modulo 2 when $x=3$.

\medskip
We see that for $v=w_{1,1}$, there exists a rational map 
$\tildmw\dashrightarrow \mC_{d'}$ as above.
Since $d'/2=4$ has only one prime factor, 
Corollary \ref{ChoiceMap} tells us that there is a unique choice for
the rational map $\widetilde{\tM}_{w_{1,1}}\dashrightarrow\mC_8$.
Moreover, it follows from Corollary~\ref{index} that
$\pi\colon \widetilde{\tM}_{w_{1,1}}\to \tM_{w_{1,1}}$ is an isomorphism.
Hence, we obtain a rational map
\[\tM_{w_{1,1}}\dashrightarrow \mC_8\]
which gives an inverse to the geometric construction of associated twisted K3 surfaces 
over the locus where $\rho(S)=1$.

\begin{remark}
The three types of Brauer classes occurring in this example have been studied before 
by Van Geemen \cite{Geemen} (see also \cite[\S2]{BrauerOdd}).
He relates the twisted K3 surfaces in the components $\tM_{w_{0,0}}$ and $\tM_{w_{0,1}}$
to certain double covers of $\P^2\times\P^2$ and to complete intersections of three quartics
in $\P^4$, respectively.
\end{remark}

\begin{remark}
In general, the component $\mw\subset\tM_d^r$ for which a rational map $\tildmw\dashrightarrow \mC_{d'}$
exists is not unique, because the class $[w]\in\labdrdual$ satisfying $T_w\cong\kdprimeperp$
is not unique modulo $\stO(\labd)$. 
We work out an example.

Let $d=14$ and $r=7$, so $\Disc \kdprimeperp$ is cyclic.
Since $r$ divides $d$, \cite[Thm.~9]{BrauerOdd} tells us that for $[w]\in\labdrdual$ of order $r$,
there is only one isomorphism class of lattices $T_w$
with cyclic discriminant group.
By Theorem~\ref{LatPrecise}, these $T_w$ are isomorphic to $\kdprimeperp$.

Consider $w_{0,1}=\tfrac17(e_1+\ell_{14}'/14)$ and 
$w_{1,3}=\tfrac17(e_1+f_1+3\ell_{14}'/14)$.
By Proposition~\ref{cyclic}, $\Disc T_{w_{0,1}}$ and $\Disc T_{w_{1,3}}$
are both cyclic. By the above, we have $T_{w_{0,1}}\cong T_{w_{1,3}}\cong K_{14\cdot 7^2}^{\perp}$.
We show that $[w_{0,1}]\not\equiv [w_{1,3}]$ in $\labdrdual/\stO(\labd)$.

Namely, suppose $[w_{1,3}]$ lies in the orbit 
$\stO(\Lambda_{14})\cdot [w_{0,1}]\subset \Lambda_{14,7}^{\dual}$.
Then there exists $z\in\Lambda_{14}^{\dual}$
such that $f_7(w_{0,1})=w_{1,3}+z$ for some $f\in\stO(\Lambda_{14})$,
that is, $f(14\cdot 7w_{0,1}) = 14\cdot 7( w_{1,3}+z)$.
Write $z=z_0+\tfrac{t}{14}\ell_{14}'$ for some $z_0\in\epart\oplus U_1\oplus U_2$ and $t\in\Z$, so
\[14\cdot 7(w_{1,3}+z) = 14(e_1+f_1)+14\cdot 7z_0+(3+7t)\ell_{14}'.\]
The square of the right hand side 
should be equal to $(14\cdot 7w_{0,1})^2=-14$. This gives
\[-14=2\cdot 14^2+14^3(e_1+f_1,z_0)+(14\cdot 7)^2(z_0)^2-14(9+6\cdot 7t+(7t)^2)\]
which simplifies to
\[8=2\cdot 14+14^2(e_1+f_1,z_0)+14\cdot 7^2(z_0)^2-(6\cdot 7t+(7t)^2).\]
Reducing modulo 7, one sees that this is not possible.
\end{remark}

\subsection{Pairs of associated twisted K3 surfaces}\label{pairs}
In \cite{Involution}, we studied the covering involution 
of Hassett's rational map $\tM_d\dashrightarrow\mC_d$ in the case this has degree two.
We showed that if $(S,L)\in\tM_d$ is mapped to $(S^{\tau},L^{\tau})$
under this involution, then $S^{\tau}$ is isomorphic to a moduli space
of stable sheaves on $S$ with Mukai vector $(3,L,d/6)$.
In this section, we discuss the analogous twisted situation.

We denote the bounded derived category 
of $\alpha$-twisted coherent sheaves on $S$ by $\Db(S,\alpha)$.
When $\alpha$ lies in $\hom(\HH^2(S,\Z)_{\prim},\Z/r\Z)\hookrightarrow\HH^2(S,\Q)$,
then by $\alpha$-twisted sheaves we mean $\exp(\alpha^{0,2})$-twisted sheaves
and $\HH^2(S,\alpha,\Z)$ means $\HH^2(S,\exp(\alpha^{0,2}),\Z)$.

\medskip
Assume that $3$ divides $d'=dr^2$. Hassett showed (see also \cite{Involution})
that the map ${\mD(\kdprimeperp)/\stO(\kdprimeperp)\to\overline{\mC}_d}$
is a composition $\nu\circ f$, where $\nu$ is the normalization of $\overline{\mC}_d$
and $f$ is generically of degree two, induced by an element in $\tO(\kdperp)$
of order two. 
The corresponding element $g\in\tO(T_w)$ induces a covering involution 
\[\tau\colon \mD(\kdprimeperp)/\stO(\kdprimeperp)\to \mD(\kdprimeperp)/\stO(\kdprimeperp)\]
that preserves $\tildmw$. 
We claim that $g$ extends to an orthogonal transformation of $\widetilde{\Lambda}$.
This follows from \cite[Cor.~1.5.2]{Nikulin} and the following statement.
We embed $T_w\subset\labd$ primitively into $\widetilde{\Lambda}$ 
using the map $\exp(w)$, as in Section \ref{Periodmaps}.

\begin{proposition}\label{discsurj}
Let $S_w:=T_w^{\perp}\subset\widetilde{\Lambda}$. The map
$\tO(S_w)\to\tO(\Disc S_w)$ is surjective.
\end{proposition}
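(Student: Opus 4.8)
The plan is to describe $S_w$ concretely enough to see it is ``small and indefinite'', reduce the surjectivity statement to a purely local one at each prime, and then dispatch the primes. Since $\widetilde\Lambda$ has signature $(4,20)$ and $T_w$ has signature $(2,19)$, the lattice $S_w$ has rank $3$ and signature $(2,1)$, so it is indefinite; moreover it contains the isotropic vector $f_4\in U_4$, because for $z\in\labd$ one has $\bigl(\exp(w)(z),f_4\bigr)=\bigl(z-(w.z)f_4,f_4\bigr)=(z,f_4)=0$ (as $z\in\Lambda\perp U_4$ in $\widetilde\Lambda$), hence $f_4\perp T_w$ and $f_4\in S_w$. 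So $S_w\otimes\Q$ is an isotropic ternary quadratic space over $\Q$ and its spin group is $\mathrm{SL}_2$, for which strong approximation holds. Finally, since $\widetilde\Lambda$ is unimodular, $\Disc S_w\cong\Disc T_w\cong\Disc\kdprimeperp$ (up to the sign of the form); by Proposition~\ref{HassettQuadForm} this group is $\Z/3\Z\times\Z/(d'/3)\Z$ (note $d'\equiv 0\bmod 6$, since $d$ is even and $3\mid d'$), and its $p$-part is cyclic for every prime $p\neq 3$, and also for $p=3$ unless $9\mid d'$.

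Next I would invoke the local--global mechanism behind \cite[Thm.~1.14.2]{Nikulin}: since $S_w$ is indefinite with $S_w\otimes\Q$ isotropic, so that strong approximation applies to its spin group, the map $\tO(S_w)\to\tO(\Disc S_w)$ is surjective as soon as, for every prime $p$, the local map $\tO(S_w\otimes\Z_p)\to\tO\bigl((\Disc S_w)_p\bigr)$ is surjective. At an odd prime $p$ this is automatic: for odd $p$ the reduction $\tO(M)\to\tO(q_M)$ is surjective for \emph{every} $\Z_p$-lattice $M$ \cite{Nikulin}. At $p=2$, the group $(\Disc S_w)_2$ is the $2$-part of the cyclic group $\Z/(d'/3)\Z$, hence cyclic, so $\ell_2\bigl((\Disc S_w)_2\bigr)\le 1\le\operatorname{rk}(S_w)-2$ and local surjectivity at $2$ follows from the usual rank estimate. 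Combining these, $\tO(S_w)\to\tO(\Disc S_w)$ is surjective, as claimed.

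The only genuinely delicate step is the reduction to the local statement. One cannot simply quote the ``$\operatorname{rk}\ge\ell_p+2$'' form of Nikulin's criterion, because when $9\mid d'$ one has $\ell_3\bigl(\Disc S_w\bigr)=2=\operatorname{rk}(S_w)-1$; it is precisely here that one needs $S_w$ to be indefinite with an isotropic vector, hence strong approximation for its spin group, together with the unconditional local surjectivity at the odd prime $3$. All the information about $\Disc S_w$ used above is read off from the identification $\Disc S_w\cong\Disc T_w\cong\Disc\kdprimeperp$ and the computations of Section~\ref{Tw}.
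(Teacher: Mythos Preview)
Your approach—identifying $S_w$ as a rank-$3$ indefinite lattice containing the isotropic vector $f_4$, then reducing global surjectivity to local surjectivity via strong approximation for the spin group—has the right shape, and the local analysis (surjectivity automatic at every odd prime; cyclic $2$-part so the rank estimate handles $p=2$) is correct. But the reduction step itself is where the real content lies, and it is not justified.

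Strong approximation holds for $\mathrm{Spin}$, not for $\tO$. Passing from $\mathrm{Spin}$ to $\tO$ introduces the determinant and the spinor norm, and one must check that local lifts $g_p\in\tO(S_w\otimes\Z_p)$ of a given $\bar g\in\tO(\Disc S_w)$ can be chosen so that their $(\det,\mathrm{spin})$-invariants patch to a global class. This is precisely the Miranda--Morrison obstruction, encoded in their groups $\Sigma^{\#}(L)$ and $\Sigma(L)$, and it is \emph{not} automatically trivial for indefinite rank-$3$ lattices even when every local map $\tO(L_p)\to\tO(q_{L_p})$ is onto. Your assertion ``surjective as soon as, for every prime $p$, the local map \ldots\ is surjective'' is stronger than what strong approximation alone provides; you acknowledge this is the delicate step but then do not verify it.

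The paper's proof bypasses the issue by citing results tailored to the two cases: when $\Disc S_w$ is cyclic one has $\ell(\Disc S_w)=1\le \mathrm{rk}\,S_w-2$, so \cite[Thm.~1.14.2]{Nikulin} applies directly; when $\Disc S_w\cong\Z/3\Z\times\Z/(d'/3)\Z$ (so $\ell=2$ and Nikulin's criterion fails), the paper invokes \cite[Cor.~VIII.7.3]{MiMo}, which carries out exactly the spinor-norm computation your argument omits. If you want a self-contained argument along your lines, you would need to compute $\Sigma^{\#}(S_w)/\Sigma(S_w)$ for the specific discriminant forms in Proposition~\ref{HassettQuadForm} and show it vanishes—feasible, but genuine additional work beyond invoking strong approximation.
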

\begin{proof}
The lattice $S_w$ has rank three. When $\Disc T_w\cong \Disc S_w$ is cyclic,
the statement follows from \cite[Thm.~1.14.2]{Nikulin}.
When $\Disc T_w$ is $\Z/(d'/3)\Z\times\Z/3\Z$, it follows from
Corollary~VIII.7.3 in \cite{MiMo}.
\end{proof}

This implies that when $\tau$ maps $(S,L,\alpha,f)\in\tildmw$
to $(S',L',\alpha',f')$,
then there is a Hodge isometry
\[\widetilde{\HH}(S,\alpha,\Z)\cong \widetilde{\HH}(S',\alpha',\Z).\]
This map might not preserve the orientation of the four positive directions.
However, by \cite[Lemma~2.3]{K3category}, there exists an orientation reversing Hodge
isometry in $\tO(\widetilde{\HH}(S,\alpha,\Z))$.
By composing with it, we see that there exists a Hodge isometry 
$\widetilde{\HH}(S,\alpha,\Z)\cong \widetilde{\HH}(S',\alpha',\Z)$ which is orientation
preserving.
By \cite{CaldararusConj}, there is an equivalence 
\[\Phi\colon \Db(S,\alpha)\to \Db(S',\alpha')\]
which is of Fourier--Mukai type:
there exists $\mE\in \Db(S\times S',\alpha^{-1}\boxtimes\alpha')$ and an isomorphism of
functors $\Phi\cong\Phi_{\mE}$.
Let $\Phi_{\mE}^H\colon \widetilde{\HH}(S,\alpha,\Z)\to \widetilde{\HH}(S',\alpha',\Z)$
be the associated cohomological Fourier--Mukai transform.
Then $S'$ is a moduli space of complexes of $\alpha$-twisted sheaves on 
$S$ with Mukai vector 
\[v=(\Phi_{\mE}^H)^{-1}(v(k(x)))=(\Phi_{\mE}^H)^{-1}(0,0,1),\] 
where $x$ is any closed point in $S'$.
It is a coarse moduli space: the universal family on $S\times S'$ exists as an 
$\alpha^{-1}\boxtimes\alpha'$-twisted sheaf, which is an untwisted sheaf 
if and only if $\alpha'$ is trivial.

\medskip
In fact, one can show that $S'$ is isomorphic to a moduli space of stable
$\alpha$-twisted \emph{sheaves} on $S$. 
Namely, by \cite{Yoshioka} (see also \cite{CaldararusConj}),
there exists a (coarse) moduli space $M(v)$ of stable $\alpha$-twisted sheaves
on $S$ with Mukai vector $v$.
By precomposing $\Phi_{\mE}$ with autoequivalences of $\Db(S,\alpha)$,
we may assume $M(v)$ is non-empty \cite[\S2]{CaldararusConj}.
Hence, as $(v)^2=0$, the space $M(v)$ is a K3 surface.

For some $B$-field $\beta\in\HH^2(M(v),\Q)$, there exists a universal family
$\mE_v$ on $S\times M(v)$ which is an $\alpha^{-1}\boxtimes\beta$-twisted sheaf.
It induces an equivalence 
${\Phi_{\mE_v}\colon \Db(S,\alpha)\to \Db(M(v),\beta)}$ whose associated
cohomological Fourier--Mukai transform $\Phi_{\mE_v}^H$ sends 
$v$ to ${(0,0,1)\in\widetilde{\HH}(M(v),\beta,\Z)}$.
The composition
\[\Phi_{\mE_v}^H\circ (\Phi_{\mE}^H)^{-1}\colon 
\widetilde{\HH}(S',\alpha',\Z)\to \widetilde{\HH}(M(v),\beta,\Z)\]
is a Hodge isometry that sends $(0,0,1)$ to $(0,0,1)$
and is orientation preserving, since both $\Phi_{\mE_v}^H$ 
and $\Phi_{\mE}^H$ are (for $\Phi_{\mE_v}^H$, see \cite{HuyStelEquivTwisted}).
It follows from \cite[\S2]{CaldararusConj} that $S'$ is isomorphic to $M(v)$.


\end{document}